   \newtheorem{theorem}[subsubsection]{Theorem}
   \newtheorem{proposition}[subsubsection]{Proposition}     
   \newtheorem{lemma}[subsubsection]{Lemma}
   \newtheorem{corollary}[subsubsection]{Corollary}
\theoremstyle{definition}
   \newtheorem{example}[subsubsection]{Example}
   \newtheorem{definition}[subsubsection]{Definition}
   \newtheorem{remark}[subsubsection]{Remark}
   \newtheorem{assumption}[subsubsection]{Assumption}
\newcommand{\QQ}{{\mathbb{Q}}}
\newcommand{\NN}{{\mathbb{N}}}
\newcommand{\PP}{{\mathbb{P}}}
\newcommand{\ZZ}{{\mathbb{Z}}}
\newcommand{\GG}{{\mathbb{G}}}
\newcommand{\bbA}{{\mathbb{A}}}
\newcommand{\bR}{{\mathbf{R}}}
\newcommand{\bmu}{{\boldsymbol{\mu}}}
\newcommand{\fF}{{\mathfrak{F}}}
\newcommand{\cB}{{\mathcal B}}
\newcommand{\cE}{{\mathcal E}}
\newcommand{\cF}{{\mathcal F}}
\newcommand{\cG}{{\mathcal G}}
\renewcommand{\cH}{{\mathcal H}}
\newcommand{\cI}{{\mathcal I}}
\newcommand{\cK}{{\mathcal K}}
\renewcommand{\cL}{{\mathcal L}}
\newcommand{\cO}{{\mathcal O}}
\newcommand{\cP}{{\mathcal P}}
\newcommand{\WP}{{\boldsymbol\cP}}
\newcommand{\cQ}{{\mathcal Q}}
\renewcommand{\cR}{{\mathcal R}}
\newcommand{\cS}{{\mathcal S}}
\newcommand{\cW}{{\mathcal W}}
\newcommand{\cX}{{\mathcal X}}
\newcommand{\cY}{{\mathcal Y}}
\newcommand{\slc}{{\operatorname{slc}}}
\newcommand{\can}{{\operatorname{can}}}
\newcommand{\cm}{{\operatorname{cm}}}
\newcommand{\gor}{{\operatorname{gor}}}
\newcommand{\gorslc}{{\text{gor-slc}}}
\newcommand{\StaL}{{\cS}ta^\cL}
\newcommand{\OrbL}{{{\cO}rb^\cL}} 
\newcommand{\Orbl}{{\cO}rb^\lambda}
\newcommand{\Orbo}{{\cO}rb^\omega}
\newcommand{\KolL}{\cK^\cL} 
\newcommand{\Koll}{\cK^\lambda} 
\newcommand{\Kolo}{\cK^\omega}
\newcommand{\Spec}{\operatorname{Spec}}
\newcommand{\Proj}{\operatorname{Proj}}
\newcommand{\cProj}{{{\WP}\boldsymbol{roj}}}
\newcommand{\Isom}{\operatorname{Isom}}
\newcommand{\Pic}{{\operatorname{\mathbf{Pic}}}}
\newcommand{\Hom}{{\operatorname{Hom}}}
\newcommand{\cHom}{{{\cH}om}}
\newcommand{\Aut}{{\operatorname{Aut}\,}}
\newcommand{\Sym}{{\operatorname{Sym}}}
\newcommand{\codim}{\operatorname{codim}}
\newcommand{\lrar}{\longrightarrow}
\newcommand{\dar}{\downarrow}
\newcommand{\PGL}{{\operatorname{\mathbf{PGL}}}}
\newcommand{\cHilb}{{{\cH}ilb}}
\newcommand{\lcm}{{\operatorname{lcm}}}
\newcommand{\setmin}{\smallsetminus}
\newcommand{\thickslash}{\mathbin{\!\!\pmb{\fatslash}}}
\renewcommand{\setminus}{\setmin}
\begin{document}

\title[Stable varieties
]{Stable varieties with a twist}

\author[D. Abramovich]{Dan Abramovich}
 \thanks{Research of D.A. partially supported by NSF grant DMS-0335501 and DMS-0603284}
  
\address{Department of Mathematics, Box 1917, Brown University,
Providence, RI, 02912, U.S.A} 
\email{abrmovic@math.brown.edu}

\author[B. Hassett]{Brendan Hassett}
\thanks{Research of B.H. partially supported by NSF grants DMS-0196187, DMS-0134259, and DMS-0554491 and by the Sloan Foundation}  
\address{Department of Mathematics, MS 136,
Rice University,
Houston, TX 77251, U.S.A}
\email{hassett@math.rice.edu}
\date{April 17, 2009}

\maketitle

\setcounter{tocdepth}{1}
\tableofcontents

\section{Introduction}

\subsection{Moduli of stable varieties: the case of surfaces}
In the paper \cite{K-SB}, Koll\'ar and Shepherd-Barron introduced {\em stable surfaces } as a generalization
of stable curves. 
This class is natural from the point of view of the minimal model program, which shows that
any one-parameter family of surfaces of general type admits a unique stable limit.  Indeed, the stable
reduction process of Deligne and Mumford can be interpreted using the language of minimal models of surfaces.
Stable surfaces admit {\em semilog canonical singularities}.
   Codimension-one semilog canonical singularities
are nodes, but in codimension two more complicated singularities arise.  However, these singularities are all reduced, 
satisfy Serre's condition $S_2$ (hence are Cohen--Macaulay in codimension 2),
and admit a well-defined $\QQ$-Cartier canonical divisor $K_S$.
Stable varieties in any dimension are similarly defined
as proper varieties with semilog canonical singularities and ample canonical divisor.

Koll\'ar and Shepherd-Barron proposed the moduli space of stable surfaces as the natural compactification
of the moduli space of surfaces of general type.  Much of this was established in \cite{K-SB}, which also
offered a detailed classification of the singularities that arise.  Boundedness of the class of stable 
surfaces with fixed invariant $(K_S)^2$ was shown by Alexeev \cite{Alexeev}. 

\subsection{The problem of families}
To a great extent this established the moduli space of stable surfaces as the `right' 
geometric compactification of moduli of surfaces of general type.  But one nagging question remains
unresolved: {\em what flat families $\pi:X \to B$ of stable surfaces should be admitted
in the compactification?}

The problem stems precisely from the fact that the dualizing sheaf
$\omega_S = \cO_S(K_S)$ is not necessarily  invertible for a stable surface.
Rather, for a suitable positive integer $n$ {\em depending on $S$}, the sheaf 
 $$ \cO_S(nK_S) \ \  = \ \ \omega_S^{[n]} \ \ :=\ \  ({\omega_S^n})^{**} $$ 
is invertible.  As it turns out, there exist flat families $\pi:X \to B$ 
over a smooth curve where the fibers
are stable surfaces but $\omega_{\pi}^{[n]}$ is not locally free for any $n\neq 0$.  For a local example, see Pinkham's analysis of the deformation space of the cone over a 
rational normal quartic curve described in \cite[Example 2.8]{K-SB}.

Two approaches were suggested to resolve this issue (see \cite{HK} for detailed
discussion). 
In \cite[5.2]{KollarJDG}, Koll\'ar proposed what has come to be called a {\em Koll\'ar family} 
of stable surfaces, where the formation of {\em every} saturated power of $\omega_{\pi}$ is required to commute 
with base change.  
Viehweg \cite{Viehweg} suggested
using families where {\em some} saturated power of $\omega_{\pi}$ is invertible
(and in particular, commutes with base extension), i.e., 
there exists an integer $n>0$ and an invertible sheaf on $X$
restricting to $\omega_{S}^{[n]}$ on each fiber $S$ of $\pi$.  

It has been shown in \cite{HK} that Viehweg families give rise to a good moduli space.
The purpose of this article is to address the case of Koll\'ar families.  While this
question has been considered before (cf. \cite{Hacking}), the solution we propose is
perhaps more natural than previous approaches, and has implications beyond the question
at hand. 

We must note that Koll\'ar's \cite{KollarHH} resolves the issue by tackling it head on using the moduli spaces of Husks. Our approach goes by way of showing that, in the situation at hand, if we consider  objects with an appropriate stack structure, the problem does not arise. This does  require us to show that moduli spaces of such stack structures are well behaved. 

\subsection{Canonical models and quotient stacks}
Here is the basic idea:  Let $X$ be a smooth projective variety of general type.
A fundamental invariant of $X$ is the {\em canonical ring}
$$R(X):=\oplus_{n\ge 0} \Gamma(X,\omega_X^n),$$
a graded ring built up from the differential forms on $X$.  This ring has long
been known to be a birational invariant of $X$ and has recently been shown to 
be finitely generated \cite{BCHM}.  This was known classically for curves and
surfaces, and established for threefolds by S. Mori in the 1980's.
The associated projective variety
$$X^{\can}:=\Proj R(X)= (\Spec R(X) \setminus 0 ) / \GG_m$$
is called the {\em canonical model} of $X$.  Some saturated power
$\omega^{[n]}_{X^{\can}}$ is ample and invertible, essentially by construction.   
Therefore, deformations of canonical models (and stable surfaces)
are most naturally expressed via deformations of the canonical ring.

In these terms, our main problem is that certain deformations of a stable surface
do not yield deformations of its canonical ring.  Our solution is to replace $X^{\can}$
with a more sophisticated geometric object that remembers its algebraic origins.
Precisely, we consider quotient {\em stacks}
$$\cX^{\can}=\cProj R(X) := \left[ (\Spec R(X) \setminus 0 ) / \GG_m\right],$$
which admit nontrivial stack-structure at precisely the points where $\omega_{X^{\can}}$
fails to be invertible.  Our main goal is to show that this formulation is equivalent
to Koll\'ar's, and yields a workable moduli space.

\subsection{Moduli of stacks: the past}
The idea that families with fibers having stack structure should admit workable moduli spaces is not new. On the level of deformations, Hacking's approach in \cite{Hacking} uses precisely this idea. The case of {\em stable fibered surfaces}, namely stable surfaces fibered over curves with semistable fibers, was discussed in \cite{AV1}. It was generalized to many moduli problems involving curves in \cite{AV2} and to varieties of higher dimension with suitable ``plurifibration'' structures \cite{A-pluri}. The fibered and plurifibered examples are a little misleading, since the existence of canonical models and complete moduli in those cases are an outcome of \cite{AV2} and do not require the minimal model program. Still they do reinforce the point that stacks do not pose a true obstacle in defining moduli.

We show in this paper that this is indeed the case for stacks of the form  $\cX = \cProj R$ for $R$ a finitely generated graded ring, and moreover this works in exactly the same way it works for varieties, if one takes the appropriate viewpoint.
Classical theory of moduli of projective varieties starts by considering varieties embedded in $\PP^n$, forming Hilbert schemes, taking quotient stacks of the appropriate loci of normally embedded varieties by the action of $\PGL(n+1)$, and taking the union of all the resulting stacks over all types of embeddings. Then one tries to carve out pieces of this ``mother of all moduli spaces'' (of varieties) which are bounded, and if lucky, proper, by fixing appropriate numerical invariants.

Miles Reid has been advocating over the years that, 
while most people insist on embedding projective varieties in projective 
space, the varieties really beg to be embedded in {\em weighted} projective spaces, denoted $\PP(\rho)$ below. This is simply because  graded rings are in general not generated in one degree;
see \cite{Reid78}, \cite{Reid-Chapters}, and especially \cite{Reid-Graded}.  
For our stacks this point is absolutely essential, and in fact our stacks (as well as varieties) are embedded in the appropriate weighted projective {\em stacks}, denoted  $\WP(\rho)$ below.  

\subsection{Moduli of stacks: this paper}
In section \ref{Sec:weighted} we consider weighted projective stacks and their substacks. Following \cite{OS} substacks are parametrized by a Hilbert scheme. All substacks of $\WP(\rho)$ are cyclotomic, in particular tame, and in analogy with the scheme case, we characterize in Proposition \ref{Prop:criterion} pairs $(\cX, \cL)$ consisting of a cyclotomic stack with a {\em polarizing line bundle} - one affording an embedding - in cohomological terms. 

Similar results are obtained in \cite{Ross-Thomas}, though the emphasis is different.  Ross and Thomas embed smooth orbifolds in weighted projective stacks
to study orbifold constant scalar curvature
K\"ahler metrics.  They develop a notion of Geometric Invariant
Theory stability for polarized orbifolds and use the relation between
stability and moment maps to formulate obstructions to existence of these metrics.

In section \ref{Sec:StaL} we use these results to construct an algebraic stack $\StaL$ parametrizing pairs $(\cX,\cL)$ of a proper stack with polarizing line bundle $\cL$  (see Theorem \ref{Th:PP-algebraic}). This involves studying  the effect of changing the range of weights on the moduli of normally embedded substacks (see Proposition \ref{Prop:change-n-m}). 

In section \ref{Sec:Orb} we introduce {\em orbispaces}, discuss the difference between {\em polarizing line bundle} and {\em polarization}, and construct  algebraic stacks $\OrbL$ and $\Orbl$ parametrizing {\em orbispaces  with polarizing line bundles} and {\em polarized orbispaces}, respectively (see Proposition \ref{Prop:OrbL}).
We further discuss the canonical polarization, and the algebraic stack $\Orbo$ of canonically polarized orbispaces, in Theorem \ref{Th:OrbOm}. 

This ends the general discussion of moduli of stacks, and brings us to the questions related to Koll\'ar families. We begin Section \ref{Sec:Kollar} by discussing Koll\'ar families of $\QQ$-line bundles (see Definition \ref{Def:Kollar}). We then define {\em uniformized twisted varieties} in Definition \ref{Def:twisted-variety}. An important point here is that {\em the conditions on flat families are entirely on geometric fibers}. 

Finally we relate the two notions and prove an equivalence of categories between Koll\'ar families $(X\to B, F)$ of $\QQ$-line bundles  and families of uniformized twisted varieties (see Theorem \ref{Th:KQisPP}). In particular this means that we have an algebraic stack $\KolL$ of Koll\'ar families with a polarizing $\QQ$-line bundle, which is at the same time the stack of twisted varieties with polarizing line bundles. The rigidification $\Koll$ of $\KolL$ is naturally the stack  of polarized twisted varieties.
Passing to the locally closed substack where the initial homology group of the relative dualizing complex is invertible and polarizing, 
we obtain the substack $\Kolo$ of {\em canonically polarized twisted varieties}. 

We end the paper by applying the previous discussion to moduli of stable varieties in Section \ref{Sec:SLC}. We define the moduli functor in terms of families of twisted stable varieties and equivalently Koll\'ar families stable varieties. We show that this is open in $\Kolo$  (see Proposition \ref{prop:SLC})  and discuss what is known about properness in section \ref{Sec:properness}. 

It may be of interest to develop a treatment of the {\em steps} of the  minimal model program, and not only the end product $\cProj R(X)$,  using stacks. It would require at least an understanding of positivity properties for the birational ``contraction'' $\cX\to X$ from a twisted variety to its coarse moduli space, where we expect the notion of {\em age}, introduced in \cite{IR}, to be salient.  This however goes beyond the scope of the present paper.

\subsection{Acknowledgments}  We are grateful to 
Valery Alexeev, 
Kai Behrend,
Tim Cochran, 
Alessio Corti,
Barbara Fantechi, 
J\'anos Koll\'ar, 
S\'andor Kov\'acs,
Martin Olsson,
Miles Reid,
Julius Ross,
Richard Thomas, 
Eckart Viehweg,
and 
Angelo Vistoli
for helpful conversations about these questions.  We appreciate the hospitality of the
Mathematisches Forschungsinstitut Oberwolfach and
the Mathematical Sciences Research Institute in Berkeley, California.

\subsection{Conventions}
Through most of this paper we work over $\ZZ$.  When we refer to schemes of finite type, we mean schemes of
finite type over $\ZZ$.   The main exceptions are Section~\ref{Sec:SLC} and the Appendix,
where we assume characteristic zero.
We leave it to the interested reader to the supply the incantations needed to extend our results to
an arbitrary  base scheme.

In characteristic zero, we freely use the established literature for Deligne-Mumford stacks.  
In positive and mixed characteristics, we rely on the recent paper \cite{AOV}, which develops a
new notion of `tame stacks' with nice properties, e.g., they admit coarse moduli spaces that 
behave well under base extension.

\subsection{Notation} \verb. .\\

\begin{tabular}{lll}
$\PP(\rho)$&	
			Weighted projective space of $\GG_m$-representation $\rho$ \\
$\WP(\rho)$&	
			Weighted projective {\em stack} of $\GG_m$-representation $\rho$  \\
${\StaL}$& 	
			Stack of {\em stacks} with polarizing line bundle\\
${\OrbL}$&	
			Stack of  {\em orbispaces} with polarizing line bundle\\ 
${\Orbl}$&	
			-- with polarization\\
${\Orbo}$&	
			-- with canonical polarization\\
${\KolL}$&	
			Stack of {\em twisted varieties} with polarizing line bundle\\
${\Koll}$&	
			--  with polarization\\
${\Kolo}$&	
			--  with canonical polarization\\
$\Box_\cm$&	
			Index indicating  Cohen-Macaulay fibers\\ 
$\Box_\gor$&	
			--  Gorenstein fibers\\ 
$\Box_\can$&	
			--  canonical singularities (in the coarse fibers)\\ 
$\Box_\slc$&	
			--  semilog canonical singularities fibers\\ 
$\Box_\gorslc$&	
			--  Gorenstein s.l.c. singularities (in the twisted fibers) \\ 
$\Box^\can$&	
			indicating canonical model of something \\ 
\end{tabular}

\section{Cyclotomic stacks and weighted projective stacks}\label{Sec:weighted}

\subsection{Weighted projective stacks}

\begin{definition}
Fix a nondecreasing sequence of positive integer weights
$$0 <  \rho_1 \le \ldots \le \rho_r$$
and consider the associated linear action of $\GG_m$
on affine space
\begin{eqnarray*}
\rho: \GG_m \times \bbA^r & \rightarrow &\bbA^r \\
\rho^*(x_1,\ldots,x_r) & = &  
	(t^{\rho_1}x_1,\ldots,t^{\rho_r}x_r).
\end{eqnarray*}
Recall that {\em weighted projective space} associated to the representation $\rho$
is defined as the quotient scheme
$$\PP(\rho)\ \ = \ \ \PP(\rho_1,\ldots,\rho_r)\ \ \ =\ \ \ 
(\bbA^r\setminus 0)\,/\,\GG_m.$$
We define the {\em weighted projective stack} as the quotient stack
$$\WP(\rho)\ \ =\ \ \WP(\rho_1,\ldots,\rho_r)\ \ \ =\ \ \  
\big[\,(\bbA^r\setminus 0)\ /\ \GG_m\,\big].$$
\end{definition}

Each weighted projective stack has the corresponding
weighted projective space as its coarse moduli space.
For each positive integer $d$ we have
$$\PP(d\rho_1,\ldots,d\rho_r)\ \ \simeq\ \ 
\PP(\rho_1,\ldots,\rho_r),$$
however, the weighted projective stacks are not isomorphic unless $d=1$. Indeed $\PP(d\rho_1,\ldots,d\rho_r)\to  
\PP(\rho_1,\ldots,\rho_r)$ is a gerbe banded by $\bmu_d$.  Note also that a weighted projective stack is
representable if and only if each weight $\rho_i=1$;
it contains a nonempty representable open subset if and only if
$\gcd(\rho_1,\ldots,\rho_r)=1$.  

\begin{definition}
Let $B$ be a scheme of finite type and $V$ 
a locally-free $\cO_B$-module of rank $r$.  Suppose
that $\GG_m$ acts $\cO_B$-linearly on $V$ with positive weights
$(\rho_1,\ldots,\rho_r)$.  Let $w_1<\ldots<w_m$ denote
the distinct weights that occur, so we have a decomposition
of $\cO_B$-modules
$$V\ \ \simeq\ \  V_{w_1} \oplus \ldots \oplus V_{w_m}$$
with
$$\rho(v) = t^{w_i}v, \quad v\in V_{w_i}.$$
Write $\bbA  := \Spec_B ( \Sym_B^\bullet V)$ for  the associated vector bundle, with its zero section $0\subset \bbA$,  and
$$
\rho: \GG_m \times_B \bbA  \rightarrow \bbA
$$
for the associated group action.
The {\em weighted projective stack} associated with $V$ and $\rho$
is the quotient stack
$$\WP( V,\rho)\ \ :=\ \ 
\left[\,(\bbA\setminus 0)\ /\ \GG_m\,\right]\  \rightarrow \ B.$$
We will sometimes drop $V$ from the notation when  the meaning  is unambiguous.  
\end{definition}

Graded coherent $\cO_{\bbA}$-modules descend 
to coherent sheaves on $\WP(V,\rho)$;  thus
for each integer $w$, the twist $\cO_{\bbA}(w)$ yields an
{\em invertible} sheaf $\cO_{\WP(V,\rho)}(w)$ on $\WP(V,\rho)$.  
The canonical graded homomorphisms
$$V_{w_i}\otimes_B \cO_{\bbA} \rightarrow \cO_{\bbA}(w_i)$$
induce
$$\phi_{w_i}:V_{w_i} \otimes_B \cO_{\WP(V,\rho)} \rightarrow  \cO_{\WP(V,\rho)}(w_i),
\quad i=1,\ldots,m.$$
Note that $\phi_{w_i}$ vanishes along the locus  where
the elements of $V_{w_i}$ are simultaneously zero, i.e., where all
the weight-$w_i$ coordinates simultaneously vanish. The homomorphisms $\phi_{w_i}, i=1,\ldots,m$ do not vanish  simultaneously anywhere.

The following lemma is well-known in the case of projective space.  
The proof for weighted-projective stacks is identical:
\begin{lemma} The stack $\WP(V,\rho)$
is equivalent, as a category fibered over the category of 
$B$-schemes, with the following category:
\begin{enumerate} 
\item Objects over a scheme $T$ consist of 
   \begin{enumerate}
   \item a line bundle $L$ over $T$, and
   \item $\cO_T$-linear homomorphisms 
      $$\phi_{w_i}:V_{w_i}\otimes_B \cO_T \to L^{w_i},\quad i=1,\ldots,m,$$
	not vanishing simultaneously at any point of $T$.
\end{enumerate}
\item Arrows consist of fiber diagrams 
$$
\begin{array}{ccc}
L' & \rightarrow  & L\\
\downarrow & & \downarrow \\
T' & \rightarrow & T 
\end{array}
$$
compatible with the homomorphisms $\phi_{w_i}$ and $\phi'_{w_i}$.
\end{enumerate}
\end{lemma}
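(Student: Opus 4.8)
The plan is to identify the groupoid $\WP(V,\rho)(T)$ of $T$-points of the quotient stack with the groupoid described in the statement, functorially in the $B$-scheme $T$, and then to observe that all the constructions commute with pullback along $T'\to T$, so that the resulting assignment is an equivalence of categories fibered over $B$-schemes. By the definition of a quotient stack, an object of $\WP(V,\rho)(T)$ is a pair $(P,f)$ consisting of a $\GG_m$-torsor $\pi:P\to T$ together with a $\GG_m$-equivariant $B$-morphism $f:P\to \bbA\setminus 0$, and an arrow is an isomorphism of torsors commuting with the maps to $\bbA\setminus 0$. So the task is to translate each piece of this data into the data of the target category.

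First I would invoke the standard equivalence between $\GG_m$-torsors on $T$ and line bundles on $T$: to a torsor $P$ one associates the line bundle $L=L(P)$ whose total space minus the zero section is $P$ (equivalently $L=P\times^{\GG_m}\bbA^1$ for the weight-one action), an equivalence of groupoids compatible with base change. Under this dictionary $\pi_*\cO_P$ decomposes into $\GG_m$-weight eigenspaces, and the weight-$w$ eigenspace is canonically $L^{w}$ once the sign convention is fixed so that weight-one coordinates pull back to sections of $L$. Next, since $\bbA=\Spec_B(\Sym_B^\bullet V)$, a $B$-morphism $f:P\to\bbA$ is the same as a $B$-algebra homomorphism $\Sym_B^\bullet V\to \pi_*\cO_P$, i.e. an $\cO_B$-linear map $V\to \pi_*\cO_P$. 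The $\GG_m$-action is by weight $w_i$ on $V_{w_i}$, so the corresponding linear coordinates are functions of weight $w_i$ on $\bbA$; hence equivariance of $f$ means exactly that the restriction $V_{w_i}\to \pi_*\cO_P$ lands in the weight-$w_i$ eigenspace $L^{w_i}$. By the eigenspace computation this is precisely an $\cO_T$-linear homomorphism $\phi_{w_i}:V_{w_i}\otimes_B\cO_T\to L^{w_i}$ for each $i$, and conversely the collection $(\phi_{w_i})$ reassembles into a single equivariant $f:P\to\bbA$.

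It then remains to match the open condition and the arrows. The map $f$ factors through $\bbA\setminus 0$ if and only if it avoids the zero section over every point of $T$, which, reading off the coordinates, is exactly the condition that the $\phi_{w_i}$ do not vanish simultaneously at any point of $T$; under the tautological maps discussed before the lemma, these $\phi_{w_i}$ are the pullbacks of the non-simultaneously-vanishing homomorphisms $\phi_{w_i}$ on $\WP(V,\rho)$. An arrow $(P',f')\to(P,f)$ over $T'\to T$ is an isomorphism of torsors after base change, which under the torsor--line-bundle equivalence becomes precisely a fiber diagram of line bundles $L'\to L$ over $T'\to T$, and compatibility with $f,f'$ becomes compatibility with the $\phi_{w_i}$ and $\phi'_{w_i}$. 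Finally, because the torsor-to-line-bundle correspondence, the pushforward $\pi_*$, and the eigenspace decomposition all commute with base change, the assignments above are pseudofunctorial in $T$ and give the asserted equivalence. I expect the only genuinely delicate point to be the bookkeeping of weights in the relative setting over $B$ --- fixing the sign so that the weight-$w$ eigenspace of $\pi_*\cO_P$ is $L^{w}$ rather than $L^{-w}$, and checking that this is the identification under which the abstractly produced $\phi_{w_i}$ coincide with the $\phi_{w_i}$ of the lemma --- together with verifying compatibility with pullback; the geometric content is otherwise the familiar functor-of-points description of (weighted) projective space.
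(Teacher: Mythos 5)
Your argument is correct and is exactly the standard functor-of-points description that the paper invokes: the paper gives no written proof, stating only that the argument is identical to the well-known case of projective space, and what you have written out (torsors $\leftrightarrow$ line bundles, equivariant maps to $\bbA$ $\leftrightarrow$ weight-graded homomorphisms $\phi_{w_i}$, the zero-section condition, and compatibility with base change) is precisely that argument. The sign convention you flag is the right point to be careful about, and your treatment of it is fine.
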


As we shall see, certain stacks admitting ``uniformizing line bundles" have
representable morphisms into weighted projective stacks.

\subsection{Closed substacks of weighted projective stacks}
By descent theory, closed substacks 
$$\cX \ \ \subset\ \  \WP(\rho)=\WP(\rho_1,\ldots,\rho_r)$$
correspond to closed subschemes
$$\cX\times_{\WP(\rho)}(\bbA^r\setminus 0)\ \  \subset\ \  
(\bbA^r\setminus 0),$$
equivariant under the action of $\GG_m$.  Let $C\cX$
denote the closure of this scheme in $\bbA^r$, i.e.,
the cone over $\cX$. 
Each such subscheme over a field $k$  can be defined by a graded ideal
$$I\subset k[x_1,\ldots,x_r].$$
The {\em saturation} of such an ideal is defined as 
$$\{f \in k[x_1,\ldots,x_r]: \left<x_1,\ldots,x_r\right>^df
\subset I \text{ for some } d\gg 0 \}$$
and two graded ideals are {\em equivalent} if they have
the same saturation.   Equivalent graded ideals give identical ideal sheaves on $\bbA^r\setminus 0$, hence they define the
same equivariant subschemes of $\bbA^r\setminus 0$ corresponding to the same substack of  $\WP(\rho)$.

There is one crucial distinction from the standard theory
of projective varieties:  when the weights  $\rho_i=1$  for all $i$, 
every graded ideal is equivalent to an ideal generated
by elements in a single degree.  This can fail when the weights
are not all equal.  However, if we set $N=\lcm(\rho_1,\ldots,\rho_r)$
then every graded ideal is equivalent to one
with generators in degrees $n,n+1,\ldots,n+N-1$, for $n$ sufficiently
large. (This can be be shown by regarding graded ideals as modules 
over the subring of $k[x_1,\ldots,x_r]$ generated by monomials of 
weights divisible by $N$.) An equivariant subscheme $C\cX\subset \bbA^r$ is specified
by the induced quotient homomorphisms on the graded pieces 
of the coordinate rings
$$k[x_1,\ldots,x_r]_d \twoheadrightarrow k[C\cX]_d, \quad
d=n,n+1,\ldots,n+N-1.$$

In sheaf-theoretic terms, $\cO_{\WP(\rho)}$-module quotients
$$\cO_{\WP(\rho)}  \twoheadrightarrow \cO_{\cX}$$
are determined by the induced quotient of $\cO_{\PP(\rho)}$-modules
$$\pi_*[\oplus_{i=0}^{N-1} \cO_{\WP(\rho)}(i)] \rightarrow
\pi_*[\oplus_{i=0}^{N-1} \cO_{\cX}(i)],$$ or any other interval of twists of length $N$, 
where $\pi:\WP(\rho) \rightarrow \PP(\rho)$ is the coarse moduli morphism.  

\begin{definition}\label{Def:normally}
Fix a representation $(V,\rho)$  of $\GG_m$ with isotypical components $V_{w_1},\ldots,V_{w_m}$ of positive weights $w_1<\cdots<w_m$. 

A closed  substack $\cX \subset \WP(\rho)$  is said to be {\em normally embedded} if
\begin{enumerate}
\item for each $1\leq j\leq m$, the homomorphism $V_{w_j} \to H^0(\cX,\cO_\cX(w_j))$ is an isomorphism,
\item for each $k\geq w_1$ and $i\geq 1$ we have $H^i(\cX,  \cO_\cX(k))=0$, and
\item  the natural homomorphism
$$\Sym\, V \ \ \lrar\ \  \oplus_{k\geq 0} H^0(\cX, \cO_{\cX}(k))$$
surjects onto the component $H^0(\cX,  \cO_\cX(k))$ for each $k\geq w_1$.
\end{enumerate}
\end{definition}

We proceed, following Olsson--Starr \cite{OS} to classify substacks $\cX \subset \WP(\rho)$ by a Hilbert scheme. The following definition is \cite[Definition 5.1]{OS}.

\begin{definition}
A locally free sheaf $\cE$ on a stack $\cY$
is {\em generating}  if, for each quasi-coherent $\cO_{\cY}$-module
$\cF$, the natural homomorphism
$$\theta(\cF):\pi^*\pi_*\cHom_{\cY}(\cE,\cF) \otimes_{\cY} \cE
\rightarrow \cF$$
is surjective.
\end{definition}

We note  that
$$\cE=\oplus_{i=0}^{N-1} \cO_{\WP(\rho)}(-i)$$
is a generating sheaf for $\WP(\rho)$ 
where $N = \lcm(w_1,\ldots,w_m)$.
The main insight of \cite{OS} \S 6
is to reduce the study of 
$\cO_{\WP(\rho)}$-quotients of $\cF$ to an analysis of
$\cO_{\PP(\rho)}$-quotients of $\pi_*\cHom_{\WP(\rho)}(\cE,\cF).$
The following is a direct application of \cite[Theorem 1.5]{OS} to weighted projective stacks.

\begin{theorem}
Let $\rho$ be an action of  $\GG_m$ on $\bbA^r$ with positive weights
and $\WP(\rho)$ the resulting weighted projective stack.
For each scheme $T$ and stack $\cY$, write
$\cY_T=\cY\times T$.
Consider the functor 
$$
\cHilb:  \ZZ\text{-schemes} \rightarrow \text{Sets}
$$
with
\begin{eqnarray*}
\cHilb_{\WP(\rho)}(T)&=& \left\{\text{\parbox{2.7in}{Isomorphism classes of  $ \cO_{\WP(\rho)_T}$-linear quotients 
		$\cO_{\WP(\rho)_T}\twoheadrightarrow \cQ,  $
 	    with  $\cQ$  flat over $T$}}\right \} \\[.5cm]
	&=& \left\{ \text{\parbox{2.7in}{Closed substacks $ \cX \subset \WP(\rho)_T
		$ over $ T, $ with $ \cX $ flat over $ T$}} \right\}.\\[.2cm]
\end{eqnarray*}
Then $\cHilb_{\WP(\rho)}$ is represented by a scheme (denoted also $\cHilb_{\WP(\rho)}$).

Furthermore, for each function $\fF: \ZZ \to \ZZ$, the subfunctor consisting of closed substacks $\cX \subset \WP(\rho)_T$ with Hilbert-Euler characteristic $$\chi(\cX_t, \cO_{\cX_t}(m)) = \fF(m)$$ for all geometric points $t\in T$ is represented by a {\em projective} scheme $\cHilb_{\WP(\rho),\fF}$ which is a  finite union of connected components of $\cHilb_{\WP(\rho)}$. 
\end{theorem}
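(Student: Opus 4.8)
The plan is to realize $\cHilb_{\WP(\rho)}$ as a Quot functor and transport its representability across the coarse moduli morphism, exactly following Olsson--Starr. Taking $\cF=\cO_{\WP(\rho)}$, the functor in question is the Quot functor $\operatorname{Quot}_{\cO/\WP(\rho)/\ZZ}$; its two displayed descriptions agree by the usual dictionary between $T$-flat quotients $\cO_{\WP(\rho)_T}\twoheadrightarrow\cQ$ and $T$-flat closed substacks. Let $\pi\colon\WP(\rho)\to\PP(\rho)$ be the coarse moduli morphism, with $\PP(\rho)$ projective over $\ZZ$, and recall that $\cE=\oplus_{i=0}^{N-1}\cO_{\WP(\rho)}(-i)$, $N=\lcm(w_1,\dots,w_m)$, is generating. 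Since $\WP(\rho)$ is tame (by \cite{AOV}) the functor $\pi_*$ is exact, and since $\cE$ is locally free the functor $F_\cE(-):=\pi_*\cHom_{\WP(\rho)}(\cE,-)$ is exact; in particular it carries $T$-flat quotients of $\cO_{\WP(\rho)_T}$ to $T$-flat quotients of the coherent sheaf $\cH:=F_\cE(\cO_{\WP(\rho)})=\pi_*\big(\oplus_{i=0}^{N-1}\cO_{\WP(\rho)}(i)\big)$ on $\PP(\rho)$.

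Next I would invoke \cite[Theorem 1.5]{OS}. Applying $F_\cE$ defines a natural transformation
\[
\Phi\colon \cHilb_{\WP(\rho)}=\operatorname{Quot}_{\cO/\WP(\rho)/\ZZ}\ \lrar\ \operatorname{Quot}_{\cH/\PP(\rho)/\ZZ},
\]
and the content of \cite{OS} is that $\Phi$ realizes $\cHilb_{\WP(\rho)}$ as a locally closed subfunctor of $\operatorname{Quot}_{\cH/\PP(\rho)/\ZZ}$: the quotients of $\cH$ arising from stack quotients are cut out, roughly, by the open condition that the tautological coarse quotient remain generating together with the closed condition that it be compatible with the $\cO_{\WP(\rho)}$-module structure (the counit $\pi^*F_\cE(\cQ)\otimes\cE\to\cQ$ reconstructing $\cQ$). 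Since $\PP(\rho)$ is projective over $\ZZ$, Grothendieck's theorem represents $\operatorname{Quot}_{\cH/\PP(\rho)/\ZZ}$ by a scheme --- a disjoint union, over Hilbert polynomials, of quasi-projective schemes. Pulling back along $\Phi$ then represents $\cHilb_{\WP(\rho)}$ by a scheme.

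For the refined statement I would first note that $T$-flatness of $\cQ$ makes each $t\mapsto\chi(\cX_t,\cO_{\cX_t}(m))$ locally constant, so fixing $\fF$ is an open-and-closed condition and $\cHilb_{\WP(\rho),\fF}$ is a union of connected components of $\cHilb_{\WP(\rho)}$. To pin down which components occur, fix the very ample $\cO_{\PP(\rho)}(1)$ pulling back to $\cO_{\WP(\rho)}(N)$; then the projection formula and exactness of $\pi_*$ give, for the image quotient, $\chi\big(\PP(\rho),F_\cE(\cO_{\cX_t})(m)\big)=\sum_{i=0}^{N-1}\fF(Nm+i)$, a polynomial $P$ in $m$ determined by $\fF$. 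Hence $\Phi$ sends $\cHilb_{\WP(\rho),\fF}$ into the single Grothendieck piece $\operatorname{Quot}^{P}_{\cH/\PP(\rho)/\ZZ}$, which is projective, hence of finite type with finitely many components; so $\cHilb_{\WP(\rho),\fF}$ is a \emph{finite} union of connected components. Projectivity then follows from the valuative criterion of properness: over a DVR a family extends by taking the flat limit of the coarse quotient, which preserves $P$ and, by flatness, the entire quasi-polynomial $\fF$, while the limit remains a substack because the equivariance condition is closed; being proper and, via the locally closed immersion $\Phi$, quasi-projective inside the projective $\operatorname{Quot}^{P}$, it is projective.

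The main obstacle --- really the only substantial input --- is \cite[Theorem 1.5]{OS}, which we use as a black box. The genuinely new verifications are that $\cE$ is generating and that $F_\cE$ is exact in the tame, possibly non-Deligne--Mumford, setting of weighted projective stacks whose weights may be divisible by the residue characteristic (where we appeal to \cite{AOV}), together with the elementary but essential bookkeeping $P(m)=\sum_{i=0}^{N-1}\fF(Nm+i)$ that converts the stacky Hilbert quasi-polynomial $\fF$ into the single coarse Hilbert polynomial $P$. It is precisely this last identity that upgrades ``union of connected components'' to ``finite union'' and supplies the projective ambient space $\operatorname{Quot}^{P}$ in which $\cHilb_{\WP(\rho),\fF}$ sits.
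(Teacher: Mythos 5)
Your proof is correct and follows essentially the same route as the paper: the paper's entire proof is the remark that the theorem is ``a direct application of [OS, Theorem 1.5]'' via the generating sheaf $\cE=\oplus_{i=0}^{N-1}\cO_{\WP(\rho)}(-i)$, which is precisely the reduction to Grothendieck's Quot scheme on the projective coarse space $\PP(\rho)$ that you carry out. The only difference is that you unpack the Olsson--Starr machinery and make explicit the bookkeeping $P(m)=\sum_{i=0}^{N-1}\fF(Nm+i)$ converting the stacky Hilbert--Euler quasi-polynomial into a single coarse Hilbert polynomial (together with the properness argument), details the paper leaves implicit.
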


\subsection{Cyclotomic stacks and line bundles}
Given an algebraic stack $\cX$,  
the {\em inertia stack} is an algebraic stack  whose objects are pairs $(\xi, \sigma)$, with $\xi$ an object of $\cX$ and $\sigma\in \Aut(\xi)$. 
The morphism $\cI_\cX\to \cX$ given by $(\xi,\sigma) \mapsto \xi$ is representable,
and $\cI_\cX$ is a group-scheme over $\cX$.  If $\cX$ is separated, the morphism $\cI_\cX \to \cX$ is proper.

\begin{definition}
A flat separated algebraic stack 
$$f:\cX \to B$$ locally of finite presentation over a scheme $B$ is said to be a {\em cyclotomic stack} if it has 
  cyclotomic stabilizers, i.e. if each geometric fiber of
$\cI_{\cX} \to \cX$
is isomorphic to the finite group scheme $\bmu_n$ for some $n$.
\end{definition}
Keeping $\cX$ separated and with finite diagonal, let $\pi:\cX \to X$ denote the coarse-moduli-space morphism;
it is proper and quasi-finite \cite{KeMo}.  The 
resulting family of coarse moduli spaces is denoted $\bar f:X \to B$.

\begin{example}
Let $Y\to B$ be an algebraic space, flat and separated over $B$.
If $\GG_m$ acts properly on $Y$ over $B$ with finite stabilizers then
the quotient stack $\cX:=\left[Y/\GG_m\right] \to B$ is cyclotomic.  

Indeed, the stabilizer scheme
$$Z=\{(y,h):h\cdot y=y \} \subset Y\times_B \GG_m$$
is equivariant with $\GG_m$-action on $Y\times_B \GG_m$
$$g\cdot(y,h)\ \ =\ \ (gy,ghg^{-1})\ \ =\ \ (gy,h).$$
On taking quotients, we obtain
$$\cI_\cX\ \ \ = \ \ \ \left[Z \,/\, \GG_m\right],$$ which is a closed substack of $\ \left[(Y\times_B \GG_m)\,/ \,\GG_m \right]\ =\  \left[\cX\times_B \GG_m\right].$
\end{example}

\begin{definition}
We say that  a stack $\cX$ {\em has index $N$} if for each object  
$\xi\in \cX(T)$ over a scheme $T$, and each automorphism 
$a\in \Aut \xi$ we have $a^N \ =\ id$, and if $N$ is the minimal positive integer satisfying this condition.
\end{definition}

\begin{lemma}
Let
$$f:\cX \to B$$
be a stack of finite presentation over a scheme $B$ and having finite diagonal.
There is a positive integer $N$ such that  $\cX$ has index $N$.
\end{lemma}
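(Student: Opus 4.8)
The plan is to reduce the claim to a uniform annihilation statement for the inertia. First, the finite-diagonal hypothesis makes $\cI_\cX\to\cX$ a finite morphism, since it is the base change of the diagonal $\cX\to\cX\times_B\cX$ along itself. Working over $\ZZ$ as in the conventions, $B$ is Noetherian and $f$ is of finite presentation, so $\cX$ and hence $\cI_\cX$ are Noetherian. I would then localize: choose a smooth surjection $U\to\cX$ with $U$ an affine scheme, and let $G:=\cI_\cX\times_\cX U$, the (finite) automorphism group scheme of the tautological object $\xi_U\in\cX(U)$, with identity section $e\colon U\to G$ (a closed immersion, as $G\to U$ is separated). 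Since every object of $\cX$ is smooth-locally a pullback of $\xi_U$ and automorphisms transport, the assertion that $\cX$ has index dividing $N$ is equivalent to the equality of the $N$-th power morphism $[N]\colon G\to G$ with $e$; granting that some such $N$ exists, the minimal one is the index. Thus it suffices to produce $N$ with $[N]_G=e$.

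For each point $s\in U$ the fiber $G_s$ is a finite group scheme over $\kappa(s)$, and such a group scheme is annihilated by its order (Deligne's theorem in the commutative case, and in general via the connected--\'etale sequence together with Lagrange's theorem for the \'etale quotient). As $\cI_\cX\to\cX$ is finite, the pushforward of $\cO_{\cI_\cX}$ is coherent, so over the quasi-compact $U$ the fiber ranks $\dim_{\kappa(s)}\cO_{G_s}$ are bounded by some integer $M$; hence $[N_0]_{G_s}=e$ on every fiber once $N_0:=\lcm(1,\dots,M)$.

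The step I expect to be the main obstacle is upgrading this fiberwise vanishing to the scheme-theoretic identity $[N]_G=e$ over $U$. Fiberwise information is genuinely insufficient here: when $G$ has infinitesimal fibers --- for instance $\bmu_p$ in characteristic $p$, which already occurs on weighted projective stacks --- the augmentation ideal is fiberwise nilpotent, and moreover $G\to U$ need not be flat, since stabilizer orders jump. To get past this I would invoke a flattening stratification of $U$: over each locally closed reduced stratum $W$ the restriction $G|_W$ is finite locally free of some order $d_W$, and there Deligne's theorem promotes the fiberwise identity to the honest equality $[d_W]_{G|_W}=e$ --- flatness being exactly what lets one pass from fibers to a statement valid with nilpotents in the fiber direction. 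Setting $N$ to be the least common multiple of the finitely many $d_W$, the morphisms $[N]$ and $e$ agree over every stratum.

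It remains to assemble these over the Noetherian base. The equalizer of $[N]$ and $e$ is a closed subscheme of $G$ containing $G$ over the dense flat open, so the coherent ideal measuring the failure of $[N]_G=e$ is supported on the complementary closed locus and restricts to zero over that locus with its reduced structure; a support-and-Nakayama argument then forces the ideal to vanish, and Noetherian induction closes the proof. In the situations actually used in this paper --- Deligne--Mumford stacks in characteristic zero, or the tame stacks of \cite{AOV} --- this last stratification step is much easier, because the stabilizers are finite \'etale or linearly reductive and one can read off a bound on the index directly from their structure.
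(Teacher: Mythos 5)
Your overall strategy --- use finiteness of $\cI_\cX\to\cX$ to bound the order of the stabilizer group schemes and then kill them all at once by a suitable $N$ --- is exactly the paper's strategy, whose own proof is a terse ``well known'' sketch ending with ``the degree is therefore bounded.'' You go further than the paper in correctly isolating the real difficulty: the scheme-theoretic identity $[N]_G=e\circ\pi$ over a possibly non-reduced $U$ is strictly stronger than its restriction to fibers or to strata. Unfortunately, the mechanism you propose for crossing that gap does not close it. If $J\subset\cO_G$ is the ideal of the equalizer of $[N]$ and $e\circ\pi$, then agreement over a stratum $W$ only gives $J\subset I_W\cdot\cO_G$; this controls the \emph{image} of $J$ in $\cO_{G_W}$, not the fiber $J\otimes\kappa(x)$ of the coherent sheaf $J$ itself, and Nakayama needs the latter. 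The toy case $U=\Spec k[\epsilon]$ with $J=(\epsilon)\subset\cO_G$ already defeats the ``support-and-Nakayama'' step: $J$ is supported on the reduced point, restricts to zero there as a subsheaf of $\cO_G$, and is nonzero. Over $\Spec k[\epsilon]$ your flattening stratification is either the single non-reduced stratum $U$ itself (when $G$ is flat) --- in which case you are invoking Deligne's theorem over a base with nilpotents, which is a theorem only for \emph{commutative} finite flat group schemes and a well-known open problem in general, while stabilizers of a stack with finite diagonal need not be commutative --- or else the single reduced point, from which no induction recovers the $\epsilon$-direction. So the step you yourself flagged as ``the main obstacle'' is genuinely not overcome.

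To be fair, the paper's proof (``we may assume $\cX$ is of finite type over a field'') silently makes the same reduction to fibers and is open to the same objection, so you have exposed a real issue rather than introduced one. Moreover, in every situation the paper actually uses, your closing remark already suffices: in characteristic zero the inertia is finite \'etale, hence insensitive to nilpotents and controlled by its reduced geometric fibers, and for cyclotomic or tame stacks the stabilizers are $\bmu_n$'s (killed by $n$ over any base) or linearly reductive. I would either restrict the lemma to those settings, or replace the stratification argument by a direct algebraic one bounding the exponent of the finite $\cO_U$-algebra $\cO_G$ with its comultiplication; as written, the general-case argument has a hole exactly at its crux.
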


\begin{proof} This is well known: 
we may assume $\cX$ is of finite type over a field $k$. Then the inertia stack
$\cI_\cX$
is of finite type as well.  The projection morphism 
$$\cI_\cX \to \cX$$
is finite, with fibers the automorphism group-schemes 
of the corresponding objects of $\cX$. 
The degree is therefore bounded. \end{proof}

We note that cyclotomic stacks are tame \cite[Theorem 3.2 (b)]{AOV}. The following lemma is a special case of \cite[Lemma 2.2.3]{AV2}  in the Deligne--Mumford case, and  \cite[Theorem 3.2 (d)]{AOV} in general:

\begin{lemma} 
Consider a cyclotomic stack 
$$f:\cX \to B$$
 and a geometric closed point
$\xi:\Spec K \to \cX$ with stabilizer $\bmu_n$.  Let $\bar\xi:\Spec K \to X$ be the corresponding point. 
Then, in a suitable \'etale neighborhood of $\bar\xi$, the stack
$\cX$ is a quotient of an affine
scheme by an action of $\bmu_n$.  
\end{lemma}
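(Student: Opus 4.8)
The plan is to reduce the statement to a standard local structure theorem for tame stacks.  Since the claim is étale-local near the point $\bar\xi\in X$, I would first pass to a suitable étale neighborhood of $\bar\xi$ and replace $B$ by the spectrum of a strictly henselian local ring, so that $\cX\to B$ becomes a tame stack (by the cited tameness of cyclotomic stacks) over a local base with a single distinguished closed point whose stabilizer is $\bmu_n$.  The essential input is the local structure theorem for tame stacks from \cite[Theorem 3.2]{AOV}: a tame stack is, étale-locally on its coarse space, a quotient $[U/G]$ of a finite scheme $U$ by a \emph{linearly reductive finite group scheme} $G$, where $G$ can be taken to be the geometric stabilizer group scheme at the point in question.

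The key steps, in order, are as follows.  First I would invoke the local structure result to write $\cX\simeq [U/G]$ over an étale neighborhood of $\bar\xi$, with $G$ the stabilizer $\bmu_n$ at $\xi$ and $U$ affine (the coarse space of $U$ being the étale neighborhood of $\bar\xi$).  Second, I must upgrade this from ``some linearly reductive $G$'' to ``$G=\bmu_n$ acting on an affine scheme.''  This is where the \emph{cyclotomic} hypothesis does real work, as opposed to mere tameness: the geometric stabilizer at $\xi$ is \emph{cyclotomic}, hence isomorphic to $\bmu_n$, and by shrinking the neighborhood further I can arrange that the stabilizer group scheme that appears in the Olsson--Abramovich--Vistoli presentation is exactly this $\bmu_n$, constant over the base, rather than a group scheme of varying order over the coarse space.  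Third, I would check that $U$ is an affine \emph{scheme} and not merely an algebraic space: since $U\to \cX$ is finite (being a $G$-torsor over $\cX$ composed with the presentation) and $U$ is finite over an affine coarse base, affineness follows from finiteness over an affine, so $U=\Spec A$ for a $\bmu_n$-equivariant ring $A$.

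The main obstacle I expect is the second step: ensuring that the finite flat linearly reductive group scheme produced by the general tame-stacks machinery is genuinely the \emph{constant} cyclotomic group $\bmu_n$ near $\bar\xi$, rather than a more general linearly reductive group scheme whose fibers happen to be cyclotomic.  Over a general base the stabilizer group schemes can jump, and the abstract theorem only gives a linearly reductive $G$; pinning it down to $\bmu_n$ requires using that at the specific point $\xi$ the stabilizer is literally $\bmu_n$ and then a rigidity or semicontinuity argument to see that, after passing to a small enough étale neighborhood, no larger or different group scheme intervenes.  Once $G=\bmu_n$ is secured, the remaining assertions (affineness of $U$, the equivalence $\cX\simeq[U/\bmu_n]$ over the neighborhood) are formal consequences of the representability and properness of $\cI_\cX\to\cX$ already recorded in the excerpt, together with the cited \cite[Theorem 3.2 (d)]{AOV} and, in the Deligne--Mumford setting, \cite[Lemma 2.2.3]{AV2}.
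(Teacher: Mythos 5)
Your proposal matches the paper's approach: the paper gives no independent argument, simply observing that the lemma is a special case of \cite[Lemma 2.2.3]{AV2} in the Deligne--Mumford case and of \cite[Theorem 3.2 (d)]{AOV} in general, which is precisely the local structure theorem for tame stacks that you invoke. The ``main obstacle'' you flag is not actually one, since in the cited theorem the group $G$ appearing in the \'etale-local presentation $[U/G]$ is already the stabilizer group scheme of the chosen geometric point, which is $\bmu_n$ by the cyclotomic hypothesis.
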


The following is a special case of \cite[Lemma 2.2.3]{AV2} in the tame Deligne--Mumford case, and \cite[Corollary 3.3]{AOV} in general.

\begin{lemma} \label{Lem:cms-flat}
Let $f:\cX \to B$ be a cyclotomic stack (assumed by definition flat).
\begin{enumerate}
\item{The coarse moduli space
$\bar f:X \to B$ is flat.}
\item{Formation of coarse moduli spaces
commutes with 
base extension, i.e., for any morphism of schemes $X'\to X$,
the coarse moduli space of $\cX\times_X X'$ is $X'$.  In particular,
the geometric fibers of $\bar{f}$ are coarse moduli spaces
of the corresponding fibers of $f$.}
\end{enumerate}
\end{lemma}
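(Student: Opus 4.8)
The statement to prove is Lemma \ref{Lem:cms-flat}, which has two parts:
\begin{enumerate}
\item The coarse moduli space $\bar f: X \to B$ is flat.
\item Formation of coarse moduli spaces commutes with base extension.
\end{enumerate}

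The plan is to reduce everything to the local structure theorem for cyclotomic stacks (the preceding lemma), which says that étale-locally on $X$ near a point $\bar\xi$, the stack $\cX$ is of the form $[U/\bmu_n]$ for an affine scheme $U$ carrying a $\bmu_n$-action. Since both flatness and the base-change property are étale-local on $X$ and on $B$, I may assume throughout that $\cX = [U/\bmu_n]$ with $U = \Spec A$ affine. In this local model the coarse moduli space is $X = \Spec A^{\bmu_n}$, the ring of invariants. So the two assertions become statements about the invariant subring $A^{\bmu_n}$.

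\begin{proof}[Proof plan]
The key input is the preceding structure lemma: étale-locally on the coarse space $X$ near any geometric point, the cyclotomic stack $\cX$ has the form $[\Spec A / \bmu_n]$, and its coarse moduli space is $\Spec A^{\bmu_n}$. Since flatness over $B$ and the commutation of coarse-space-formation with base change are both étale-local on $X$ (and on $B$), it suffices to verify both statements in this local model. First I would prove \textbf{(2)}. Because $\cX$ is a cyclotomic and hence \emph{tame} stack (by \cite[Theorem 3.2 (b)]{AOV}), the functor of invariants is exact: for a $\bmu_n$-module $M$ one has $(M)^{\bmu_n}\otimes_{A^{\bmu_n}} A'^{\bmu_n} = (M\otimes \cdots)^{\bmu_n}$ compatibly, because in the tame case push-forward $\pi_*$ along $\pi:\cX\to X$ is exact and commutes with arbitrary base change. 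Concretely, given $X'\to X$, the fiber product $\cX\times_X X'$ is the quotient stack $[\Spec(A\otimes_{A^{\bmu_n}} A')/\bmu_n]$, and tameness guarantees $(A\otimes_{A^{\bmu_n}}A')^{\bmu_n} = A^{\bmu_n}\otimes_{A^{\bmu_n}} A' = A'$, so the coarse space of the base change is $X'$ as required. This is precisely where tameness is essential: in the non-tame (e.g.\ wildly ramified) case the invariants do not commute with base change.

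For \textbf{(1)}, flatness of $\bar f:X\to B$, I would again work in the local model $X = \Spec A^{\bmu_n}$, where now $A$ is flat over $\cO_B$ by the flatness hypothesis on $f:\cX\to B$ (flatness of $\cX$ over $B$ pulls back to flatness of the atlas $U=\Spec A$ over $B$). Writing $R = \cO_B$ locally, I must show that $A^{\bmu_n}$ is flat over $R$ given that $A$ is flat over $R$. Since $\cX$ is tame, the invariants functor $M \mapsto M^{\bmu_n}$ is exact on $\bmu_n$-representations, so $A \mapsto A^{\bmu_n}$ is a direct summand of $A$ as an $A^{\bmu_n}$-module — indeed the Reynolds-type averaging operator (available precisely because the stack is tame) splits the inclusion $A^{\bmu_n}\hookrightarrow A$ as $A^{\bmu_n}$-modules, realizing $A^{\bmu_n}$ as a retract of $A$. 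A retract of a flat $R$-module is flat, so $A^{\bmu_n}$ is flat over $R$, giving flatness of $\bar f$.

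The main obstacle, and the conceptual heart of the argument, is the exactness of the invariants functor, i.e.\ the tameness of cyclotomic stacks. Once one has \cite[Theorem 3.2]{AOV} — that cyclotomic stacks are tame, that their coarse spaces are formed by taking invariants, and that this is exact and commutes with base change — both parts of the lemma follow formally from the local model. The remaining verifications (that the local model descends, that flatness of $\cX/B$ gives flatness of the atlas, and that the splitting is $A^{\bmu_n}$-linear) are routine once tameness is in hand. I would therefore present the proof essentially as a citation of \cite{AOV} applied through the structure lemma, with the two parts deduced as above; indeed the paper's own remark that this is ``a special case of \cite[Corollary 3.3]{AOV}'' confirms that the substantive content is isolated entirely in the tameness machinery.
\end{proof}
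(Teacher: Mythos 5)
Your argument is correct and is essentially the same as the paper's: the paper gives no proof of this lemma beyond citing \cite[Lemma 2.2.3]{AV2} and \cite[Corollary 3.3]{AOV}, and what you have written is precisely the standard proof of that cited result (local model $[\Spec A/\bmu_n]$, tameness giving exactness of invariants and commutation with base change, and the Reynolds-operator retract argument for flatness). No gaps.
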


\begin{lemma}\label{Lem:bundle-descends} Let $\cX\to B$ be a cyclotomic stack of index $N$
and $\cL$ an invertible sheaf on $\cX$.  
Then there is an invertible sheaf $M$ on $X$ and 
an isomorphism 
$$\cL^N\simeq \pi^*M.$$ 
\end{lemma}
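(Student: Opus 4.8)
The plan is to show that $\cL^N$ carries a trivial action of the inertia, and therefore descends along the coarse moduli morphism $\pi\colon\cX\to X$ to the sheaf $M:=\pi_*\cL^N$. Because $\cX$ is cyclotomic it is tame, so $\pi_*$ is exact with $\pi_*\cO_\cX=\cO_X$; consequently $\pi^*$ is fully faithful and its essential image consists exactly of those sheaves on $\cX$ whose inertia action is trivial, with quasi-inverse $\pi_*$. Granting this, it suffices to verify that the natural action of $\cI_\cX$ on $\cL^N$ is trivial and that $M=\pi_*\cL^N$ is invertible; the isomorphism $\cL^N\simeq\pi^*M$ is then the counit of the adjunction.

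For the triviality of the inertia action, recall that any invertible sheaf $\cL$ determines a character of the inertia: over each object $\xi$ the group $\Aut(\xi)$ acts $\cO$-linearly on the fibre $\cL|_\xi$, yielding a homomorphism $\chi_\xi\colon\Aut(\xi)\to\GG_m$. Since $\cX$ has index $N$, every $a\in\Aut(\xi)$ satisfies $a^N=\mathrm{id}$, whence $\chi_\xi(a)^N=\chi_\xi(a^N)=1$; equivalently, the character attached to $\cL^N$ is $\chi_\xi^N\equiv 1$. Thus $\cI_\cX$ acts trivially on $\cL^N$ at every point. Concretely, in the \'etale-local presentation $\cX=[U/\bmu_n]$ furnished by the local structure lemma, $\cL$ corresponds to a $\bmu_n$-equivariant line bundle on $U$; a point with stabiliser $\bmu_m$, $m\mid n\mid N$, acts on its fibre through a character $\zeta\mapsto\zeta^k$, and the $N$-th power $\zeta\mapsto\zeta^{kN}=1$ is trivial because $m\mid N$.

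It remains to produce $M$ globally and check invertibility, which I would do \'etale-locally using the quotient presentation $\cX=[U/\bmu_n]$ with $U=\Spec A$ affine and coarse space $X=\Spec A^{\bmu_n}$. There $\pi_*\cL^N$ is the invariant pushforward of an equivariant line bundle whose stabilisers act trivially on fibres, and the descent criterion for the linearly reductive group $\bmu_n$ identifies it with a genuine line bundle on the quotient whose pullback recovers $\cL^N$. These local line bundles are canonically $\pi_*\cL^N$, so they glue to a global invertible sheaf $M=\pi_*\cL^N$ on $X$, and the map $\pi^*M\to\cL^N$ is an isomorphism since it is one \'etale-locally.

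The main obstacle is the descent step: one must know that for the tame stabiliser $\bmu_n$ an equivariant line bundle on which every stabiliser acts trivially on fibres is the pullback of an honest line bundle on the coarse quotient, and that both invertibility of $M$ and the isomorphism $\pi^*M\simeq\cL^N$ may be verified \'etale-locally. This rests on the tameness of cyclotomic stacks and the good base-change behaviour of coarse moduli recorded in Lemma~\ref{Lem:cms-flat}; the character computation itself is elementary.
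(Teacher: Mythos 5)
Your proposal is correct and follows essentially the same route as the paper's proof: observe that the inertia acts trivially on $\cL^N$ (because every stabilizer $\bmu_r$ has $r\mid N$), pass to a local quotient presentation $[U/\bmu_r]$, and conclude that $M=\pi_*\cL^N$ is invertible with $\pi^*M\simeq\cL^N$. The paper phrases the local descent step by identifying the total space of $\cL^N$ with $\cX\times\bbA^1$, while you invoke the descent criterion for the linearly reductive group $\bmu_r$, but these are the same argument in substance.
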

\begin{proof}  Each automorphism group acts trivially on the fibers of
$\cL^N$ and hence trivially on the sheaf $\cL^N$.  Write $\cX$ locally near a geometric point $\bar\xi$ of $X$ as $[U/\bmu_r]$ with $r|N$;
assume $\cL$ is trivial on $U$. 
Then the total space of $\cL^N$ is the quotient $[(U\times \bbA^1)/\bmu_r]$, but the action on the factor $\bbA^1$ is trivial, and the total space is  $[U/\bmu_r]\times \bbA^1 = \cX\times \bbA^1$. Its coarse moduli space in this neighborhood is $X \times \bbA^1$.
It follows
that $M = \pi_*\cL^N$ is an invertible sheaf on the
coarse moduli space and $\pi^*M \to \cL$ is an isomorphism. See also \cite[Lemma 2.9]{AOVtwisted} 
\end{proof}

\begin{definition}\label{Def:principal}
Let $\cL$ be an invertible sheaf over a stack $\cX$.  
The stack
$$\cP_\cL \ \ :=\ \  \Spec_\cX \ \left(\oplus_{i\in \ZZ} \cL^i \right)\ \ \rightarrow\ \  \cX$$
is called the {\em principal bundle} of $\cL$.  
The grading induces a $\GG_m$-action on $\cP_{\cL}$ over $\cX$,
which gives $\cP_{\cL}$ the structure of a $\GG_m$-principal
bundle over $\cX$.  
\end{definition}
This definition requires the spectrum of a finitely-generated quasi-coherent 
sheaf of algebras, whose existence is guaranteed by \cite{LM} \S 14.2.

We shall require the following, see \cite[Lemma 4.4.3]{AV2} or \cite{LM}.
\begin{lemma} \label{lemma:repcrit}
Let $\cX$ and $\cY$ be algebraic stacks.  Let 
$g:\cX \to \cY$ be a morphism.  The following are equivalent:
\begin{enumerate}
\item{$g$ is representable;}
\item{for any algebraically closed field $K$ and 
geometric point $\xi:\Spec(K)\to \cX$,
the natural homomorphism of group schemes
$$\Aut \xi \to \Aut g(\xi)$$
is a monomorphism.}
\end{enumerate}
\end{lemma}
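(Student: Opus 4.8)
The plan is to prove the equivalence of representability with the pointwise injectivity of automorphisms by reducing the global statement to a fiber-by-fiber analysis over the diagonal. Recall that a morphism $g:\cX\to\cY$ is representable precisely when, for every scheme $T$ and every morphism $T\to\cY$, the fiber product $\cX\times_\cY T$ is (equivalent to) an algebraic space; equivalently, $\cX\times_\cY T$ has trivial automorphism groups at all geometric points. The implication $(1)\Rightarrow(2)$ is the easy direction. Given a geometric point $\xi:\Spec K\to\cX$ mapping to $\eta=g(\xi)$, I would pull back $\cX$ along the atlas of $\cY$ through $\eta$, or more directly observe that an automorphism $a\in\Aut\xi$ lying in the kernel of $\Aut\xi\to\Aut\eta$ produces a nontrivial automorphism of the point $\xi$ viewed in the fiber $\cX\times_\cY\Spec K$; since representability forces this fiber to be an algebraic space, its geometric points carry no nontrivial automorphisms, so $a=\mathrm{id}$, giving the monomorphism.

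For the substantive direction $(2)\Rightarrow(1)$, I would use the standard criterion that representability is equivalent to the diagonal $\Delta_g:\cX\to\cX\times_\cY\cX$ being a monomorphism of stacks. First I would reduce to checking the monomorphism property on geometric points: a morphism of algebraic stacks is a monomorphism if and only if it is fully faithful as a morphism of fibered categories, and by standard descent this can be tested after base change and ultimately on the automorphism and isomorphism sheaves. Concretely, for a geometric point $(\xi_1,\xi_2,\alpha)$ of $\cX\times_\cY\cX$ — where $\alpha:g(\xi_1)\xrightarrow{\sim}g(\xi_2)$ — the fiber of $\Delta_g$ records isomorphisms $\xi_1\xrightarrow{\sim}\xi_2$ compatible with $\alpha$, and $\Delta_g$ is a monomorphism exactly when this fiber is either empty or a point with no nontrivial automorphisms. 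The automorphisms of such a fiber are exactly the elements of $\ker(\Aut\xi_1\to\Aut g(\xi_1))$, so hypothesis $(2)$ guarantees they are trivial.

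The main obstacle is the passage from the pointwise hypothesis to the global statement that $\Delta_g$ is a monomorphism, rather than merely unramified or having trivial stabilizers: I must verify that the fiber category is not just rigid but actually a setoid (a sheaf of sets) after base change to any scheme. The cleanest route is to invoke that $\Delta_g$ is always representable and separated (since $\cX,\cY$ are algebraic), so it suffices to show it is both a monomorphism on geometric points and induces injections on automorphism groups; these two conditions together force $\Delta_g$ to be a monomorphism in the scheme-theoretic sense, and hence $g$ representable. Since this is a standard foundational result, I would present the argument concisely and cite \cite[Lemma 4.4.3]{AV2} or \cite{LM} for the verification of the setoid condition, which is where the only genuine technical content — descent for the isomorphism sheaf $\mathit{Isom}(\xi_1,\xi_2)$ — resides.
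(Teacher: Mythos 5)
Your argument reaches the right conclusion but is organized differently from the paper's. You route everything through the relative diagonal: $g$ is representable iff $\Delta_g:\cX\to\cX\times_\cY\cX$ is a monomorphism, and you identify the fiber of $\Delta_g$ over a geometric point $(\xi_1,\xi_2,\alpha)$ with the space of isomorphisms $\xi_1\to\xi_2$ lying over $\alpha$, a torsor under $\ker(\Aut\xi_1\to\Aut g(\xi_1))$. The paper instead tests representability directly against morphisms $V\to\cY$ from algebraic spaces: it computes that the automorphism group of a geometric point of $\cX\times_\cY V$ is precisely that same kernel, and then invokes the characterization of algebraic spaces as stacks with trivial stabilizers --- taking $V$ arbitrary for $(2)\Rightarrow(1)$ and a surjective $V\to\cY$ for the converse, surjectivity being what guarantees every geometric point of $\cX$ is seen in $\cX\times_\cY V$. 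Both proofs turn on the same kernel computation and the same foundational fact (trivial inertia characterizes algebraic spaces, equivalently the diagonal criterion you cite), so the content is essentially shared; the paper's version skips the extra layer of the diagonal criterion and is shorter, while yours is the more standard textbook reduction and localizes the technical debt cleanly in the $\Isom$-sheaf.

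One caution about your closing claim: for a general representable separated morphism, injectivity on geometric points together with injectivity on automorphism groups does \emph{not} force a monomorphism --- infinitesimal thickenings are an obstruction. What rescues the argument here is the torsor structure you already observed: the fibers of $\Delta_g$ over geometric points are torsors under the kernel group schemes, so what you need is triviality of those kernels \emph{as group schemes}, which is exactly hypothesis (2) of the lemma (a monomorphism of group schemes, not merely injectivity on $K$-points --- the distinction matters in positive characteristic, where the kernel can be infinitesimal). The same torsor description of $\Isom(\xi_1,\xi_2)\to\Isom(g\xi_1,g\xi_2)$ over an arbitrary base then gives the monomorphism universally. Phrased that way your step is correct; phrased as ``these two conditions together force $\Delta_g$ to be a monomorphism'' it overstates what holds for general representable morphisms.
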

\begin{proof}
 Recall that an algebraic space is a stack with trivial stabilizers. Also note that if $V$ is an algebraic space and $\phi: V \to \cY$ a morphism, then the automorphisms of a point $\xi$ of $\cX\times _\cY V$ are precisely the kernel of the map $\Aut \phi(\xi) \to \Aut g(\phi(\xi))$. So if the condition on automorphisms holds then $\cX\times _\cY V$ has trivial stabilizers and therefore $g$ representable. 
Conversely, if $g$ is representable and $V \to \cY$ is
surjective then $\cX\times _\cY V$ is representable and $\cX\times _\cY V \to \cX$  surjective. So since $\cX\times _\cY V$ has trivial stabilizers we have $\Aut \phi(\xi) \to \Aut g(\phi(\xi))$ injective for all $\xi$.
\end{proof}

We can now state a result describing generating sheaves coming from powers of a line bundle.

\begin{proposition}
Let $\cX\to B$ be a cyclotomic stack of index $N$ and $\cL$
an invertible sheaf on $\cX$.  
The following conditions are equivalent.
\begin{enumerate}
\item $\cP_\cL$ is representable;
\item  the classifying morphism $\cX \to \cB \GG_m$ associated to $\cP_\cL$ is representable;
\item $\oplus_{i=0}^{N-1} \cL^{\otimes -i}$ is a generating sheaf for $\cX$.
\item for each geometric point $\xi: \Spec K \to \cX$ the action of $\Aut \xi$ on the fiber of $\cL$ is effective;
\end{enumerate}
\end{proposition}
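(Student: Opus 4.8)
The plan is to treat condition (4) as the common hinge: I will prove $(1)\Leftrightarrow(4)$ and $(2)\Leftrightarrow(4)$ together by direct computations with automorphism groups combined with the representability criterion of Lemma~\ref{lemma:repcrit}, and then prove $(3)\Leftrightarrow(4)$ separately by an \'etale-local analysis on the coarse space. Throughout, for a geometric point $\xi\colon\Spec K\to\cX$ I write $\alpha_\xi\colon\Aut\xi\to\GG_m$ for the homomorphism recording the action of $\Aut\xi$ on the fiber $\cL|_\xi$, so that (4) is precisely the statement that every $\alpha_\xi$ is a monomorphism.

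For the equivalences among (1), (2), (4), I would first identify the geometry of the two objects. The classifying morphism $c_\cL\colon\cX\to\cB\GG_m$ of (2) is the one recording $\cL$, and $\cP_\cL$ of (1) is canonically the fiber product $\cX\times_{\cB\GG_m}\mathrm{pt}$ over the universal $\GG_m$-torsor $\mathrm{pt}\to\cB\GG_m$. The two computations I need are: (i) at $\xi$ the map on automorphism groups induced by $c_\cL$ is exactly $\alpha_\xi$, since the automorphism group of the image point of $\cB\GG_m$ is $\GG_m$ and $c_\cL$ acts on it through the linear action on $\cL|_\xi$; and (ii) a lift $p$ of $\xi$ to $\cP_\cL$ is a trivialization of $\cL|_\xi$, and $\sigma\in\Aut\xi$ lifts to an automorphism of $p$ precisely when it fixes this trivialization, so $\Aut_{\cP_\cL}(p)=\ker\alpha_\xi$. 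With these in hand, Lemma~\ref{lemma:repcrit} applied to $c_\cL$ gives $(2)\Leftrightarrow$ ``$\alpha_\xi$ is a monomorphism for all $\xi$'' $\Leftrightarrow(4)$; and $\cP_\cL$ is an algebraic space iff all its geometric points have trivial stabilizer, i.e.\ iff $\ker\alpha_\xi=1$ for all $\xi$, which is again $(4)$, giving $(1)\Leftrightarrow(4)$.

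For $(3)\Leftrightarrow(4)$ I would work \'etale-locally on the coarse space $X$. Using the description of a cyclotomic stack near a point $\bar\xi$ as a quotient $[U/\bmu_n]$ with $U$ affine, together with tameness (so that $\pi_*$ is exact and commutes with base change), the surjectivity of $\theta(\cF)$ can be checked fiberwise at the closed point carrying the full stabilizer $\bmu_n$. There it becomes a statement about representations of $\bmu_n$: writing $\cL|_{\bar\xi}=\chi^{a}$ for a generator $\chi$ of the character group $\ZZ/n$, the fiber of $\cE=\oplus_{i=0}^{N-1}\cL^{\otimes-i}$ is $\oplus_{i=0}^{N-1}\chi^{-ai}$, and $\theta(\cF)$ is surjective for all $\cF$ iff every irreducible character of $\bmu_n$ occurs among the $\chi^{-ai}$. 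Since $n\mid N$, the characters occurring are exactly the cyclic subgroup $\langle a\rangle\subseteq\ZZ/n$, which is all of $\ZZ/n$ iff $\gcd(a,n)=1$; and $\gcd(a,n)=1$ is precisely the injectivity of $\alpha_{\bar\xi}\colon\bmu_n\to\GG_m$, $\zeta\mapsto\zeta^{a}$, i.e.\ condition (4). This establishes $(3)\Leftrightarrow(4)$ and closes the cycle.

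The main obstacle is the step $(3)\Leftrightarrow(4)$. The equivalences among (1), (2), (4) are essentially formal once the two automorphism computations are in place and Lemma~\ref{lemma:repcrit} is invoked. By contrast, the generating-sheaf condition is quantified over all $\cF$ and is a priori a statement about the coarse morphism $\pi$, so the genuine work lies in reducing it to a fiberwise assertion about representations of $\bmu_n$; this is where tameness (exactness of $\pi_*$ and its compatibility with base change) and the \'etale-local quotient structure must be used with care, after which the computation with characters of $\bmu_n$ is elementary.
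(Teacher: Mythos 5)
Your proof is correct and follows essentially the same route as the paper: both hinge on identifying $\cP_\cL$ with the fiber product $\cX\times_{\cB\GG_m}\Spec\ZZ$, on Lemma~\ref{lemma:repcrit} applied to the classifying morphism, and on reducing the generating-sheaf condition to the statement that every character of each stabilizer $\bmu_r$ occurs in the fiber of $\oplus_i\cL^{-i}$, i.e.\ that the character of $\cL_\xi$ generates $\ZZ/r\ZZ$. The one organizational difference is that you use (4) as a hub while the paper proves the chain $(1)\Leftrightarrow(2)$, $(3)\Leftrightarrow(4)$, $(3)\Leftrightarrow(1)$; the one substantive difference is that where the paper simply cites \cite[Proposition 5.2]{OS} for the fiberwise characterization of generating sheaves, you re-derive it \'etale-locally from tameness and the $[U/\bmu_n]$ presentation. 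Your version is more self-contained but absorbs the base-change bookkeeping for $\pi_*$ that the citation hides; either is acceptable.
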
 
\begin{definition}
When any of the conditions in the proposition is satisfied, $\cX\to B$ is said to be {\em uniformized by $\cL$}, and $\cL$ is a {\em uniformizing line bundle} for $\cX\to B$.  
\end{definition}
\begin{proof}[Proof of Proposition] We first show that the first two conditions are equivalent. 
The representable morphism  
$$\Spec \ZZ \to \cB\GG_m$$ 
is the universal principal bundle, thus  
$$\cP_\cL = \cX\times_{\cB \GG_m}\Spec \ZZ.$$ 
Clearly if $\cX \to \cB \GG_m$ is representable, so is $\cP_\cL$. 
Conversely, if $\cP_\cL$ is representable, the automorphism group of any geometric 
object $\Spec K \to \cX$ of $\cX$ acts on the $\GG_m$-torsor 
$\Spec K \times_\cX \cP_\cL$ with trivial stabilizers;  in particular, it acts effectively. 
This means that the morphism $\cX \to \cB \GG_m$ induces a monomorphism on automorphism groups, 
and by Lemma~\ref{lemma:repcrit} it is representable.

Recall from \cite{OS}, Proposition 5.2 that a sheaf $\cF$ is a generating sheaf for $\cX$ if and only if for every geometric point $\xi:\Spec(K) \to \cX$ and every irreducible representation $V$ of $\Aut (\xi)$, the representation $V$ occurs in $\cF_\xi$. The third and fourth conditions are therefore equivalent.

We now show that the third condition is equivalent to the first. 

Assume the third condition, and fix a geometric point $\xi:\Spec(K) \to \cX$.  Then every irreducible representation of $\bmu_r$ occurs in the fiber $\oplus_{i=0}^N \cL^i_\xi$. This implies that the character $\cL_\xi$ is a generator of $\ZZ/r\ZZ$, and it clearly acts freely on the principal bundle of $\cL$. Conversely, if $\cX\stackrel{\cL}{\to}\cB\GG_m$ is representable, then $\bmu_r = \Aut  (\xi)$ injects into the automorphism group $\GG_m$ of  $\cL_\xi$, therefore the character of $\bmu_r$ in $\cL_\xi$ is a generator the dual group $\ZZ/r\ZZ$, and therefore all characters occur in $\oplus_{i=0}^r \cL^{-i}_\xi\subset \oplus_{i=0}^N \cL^{-i}_\xi$. \end{proof}

\subsection{Embedding of a cyclotomic stack in a weighted projective bundle}

\begin{definition}
Let $f:\cX \to B$ be a proper cyclotomic stack, with moduli space $\bar f:X\to B$. A {\em polarizing line bundle} $\cL$ on $\cX$ is a uniformizing line bundle, such that there is an $\bar f$-{\em ample} invertible sheaf $M$ on $X$ , an integer $N$ and an isomorphism
 $$\cL^N\ \ \simeq \ \ \pi^*M.$$\end{definition}

We have the following analogue of the standard properties of ample bundles:

\begin{proposition}\label{Prop:criterion}
let $\cX\to B$ be a proper family of stacks over a base of finite type, and let $\cL$ be a line bundle. The following conditions are equivalent:
\begin{enumerate}
\item The stack $\cX$ is cyclotomic,  and $\cL$ is a polarizing line bundle.
\item For every coherent sheaf $\cF$ on $\cX$ there exist integers $n_0$ and $N$ such that for any $n\geq n_0$ the sheaf homomorphism 
$$ \bigoplus_{j=n}^{n+N-1}H^0\left(\cX, \cL^j \otimes \cF \right)\otimes \cL^{-j}\ \  \to\ \  \cF$$ is surjective.
\item The stack $\cX$ is cyclotomic, uniformized by $\cL$, and for every coherent sheaf $\cF$ on $\cX$ there exists an integer $n_0$ such that for any $n\geq n_0$ and any $i\geq 1$ we have $$H^i(\cX,\cL^n\otimes \cF)=0.$$ 
\end{enumerate}
  
\end{proposition}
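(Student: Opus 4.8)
The plan is to prove the cycle of implications $(1)\Rightarrow(2)\Rightarrow(1)$ together with $(1)\Leftrightarrow(3)$, throughout exploiting that a cyclotomic stack is tame, so the coarse moduli morphism $\pi:\cX\to X$ has exact pushforward and satisfies $H^i(\cX,\cG)=H^i(X,\pi_*\cG)$ for every quasi-coherent $\cG$. The equivalence $(1)\Leftrightarrow(3)$ is the most mechanical: both conditions assume $\cX$ cyclotomic and $\cL$ uniformizing, so by Lemma~\ref{Lem:bundle-descends} we always have $\cL^{N}\simeq\pi^*M$ with $M:=\pi_*\cL^N$ and $N$ the index. Since $\pi_*\pi^*=\mathrm{id}$, the map $\pi^*$ is injective on Picard groups, so $\cL$ is polarizing if and only if $M$ is $\bar f$-ample. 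Writing $n=qN+s$ with $0\le s<N$ and using the projection formula $\pi_*(\cL^n\otimes\cF)=M^{q}\otimes\pi_*(\cL^s\otimes\cF)$, tameness converts the vanishing $H^i(\cX,\cL^n\otimes\cF)=0$ into $H^i(X,M^{q}\otimes\pi_*(\cL^s\otimes\cF))=0$. Running this over the finitely many residues $s$ and over all coherent $\cF$, the (relative) Serre cohomological criterion on the algebraic space $X$ shows that the vanishing in $(3)$ holds precisely when $M$ is $\bar f$-ample, i.e. precisely when $(1)$ holds.

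For $(1)\Rightarrow(2)$ I would take $N$ to be the index and use that, by the characterization of uniformizing line bundles, $\cE=\oplus_{i=0}^{N-1}\cL^{-i}$ is a generating sheaf. Applied to $\cG=\cL^{n}\otimes\cF$, the generating property yields a surjection $\oplus_{i=0}^{N-1}\pi^*\pi_*(\cL^{n+i}\otimes\cF)\otimes\cL^{-i}\twoheadrightarrow\cL^{n}\otimes\cF$. Setting $\cW_{k}:=\pi_*(\cL^{k}\otimes\cF)$ and noting $\cW_{k+N}=M\otimes\cW_{k}$ via $\cL^N=\pi^*M$ with $M$ now $\bar f$-ample, classical relative global generation for $M$ on $X$ makes each $\cW_{n+i}$ globally generated for $n\gg0$, uniformly in the finitely many residues. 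Since $H^0(X,\cW_{n+i})=H^0(\cX,\cL^{n+i}\otimes\cF)$ by tameness, pulling this global generation back to $\cX$ and composing with the generating surjection produces, after untwisting by $\cL^{-n}$, exactly the surjection of $(2)$ with window $[n,n+N-1]$.

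The substantial direction is $(2)\Rightarrow(1)$, where neither cyclotomicity nor a polarization is given in advance. I would apply $(2)$ to $\cF=\cO_\cX$ to obtain, for a fixed large $n$, sections of $\cL^{j}$ (for $j$ in a window) with no common zero; by the moduli description of $\WP(V,\rho)$ as classifying a line bundle together with non-simultaneously-vanishing homomorphisms $V_{w_j}\otimes\cO\to L^{w_j}$, these define a $B$-morphism $g:\cX\to\WP(V,\rho)$ with $g^*\cO(1)\simeq\cL$ and weights given by the window degrees. Since every substack of $\WP(\rho)$ is cyclotomic, it suffices to prove $g$ representable and to recover ampleness. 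For representability I would check Lemma~\ref{lemma:repcrit}: the induced map $\Aut\xi\to\Aut g(\xi)\subseteq\GG_m$ is exactly the action of $\Aut\xi$ on the fibre $\cL_\xi$, so I must show this action is faithful. If some $1\ne\sigma\in\Aut\xi$ acted trivially on $\cL_\xi$, hence on every $\cL^j_\xi$, then applying $(2)$ to the sheaf obtained by pushing forward, from the residual gerbe at $\xi$, a character of $\Aut\xi$ on which $\sigma$ acts nontrivially would contradict surjectivity at $\xi$: invariant sections tensored with the $\sigma$-trivial fibres $\cL^{-j}_\xi$ can only hit the $\sigma$-invariant part. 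Thus $g$ is representable, $\cX$ is cyclotomic, and faithfulness is precisely the condition that $\cL$ is uniformizing. Finally, applying $(2)$ to structure sheaves of pairs of points in a fibre separates them, so the induced coarse morphism $\bar g:X\to\PP(\rho)$ is quasi-finite, hence finite since $X$ is proper over $B$; pulling back the ample $\cO_{\PP(\rho)}(N)$, for $N=\lcm$ of the weights, gives an $\bar f$-ample $M$ with $\cL^N=\pi^*M$, so $\cL$ is polarizing.

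I expect the \emph{main obstacle} to be this last direction, and within it the passage from the surjectivity hypothesis $(2)$ to representability and finiteness. The delicate points are the choice of test sheaves — characters pushed forward from residual gerbes to pin down stabilizer actions, and structure sheaves of finite subschemes to separate points — and verifying that surjectivity of the windowed evaluation map on the relevant geometric fibres forces the automorphism groups to inject into $\GG_m$ and the coarse map $\bar g$ to be finite. Checking that these local representation-theoretic consequences assemble into genuine representability of $g$ and $\bar f$-ampleness of $M$, uniformly over the proper family $\cX\to B$, is where the real work lies; the remaining implications are essentially the tame-stack translation of the classical Serre criterion.
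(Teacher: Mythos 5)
Your argument for $(1)\Leftrightarrow(3)$ and for $(1)\Rightarrow(2)$ is essentially the paper's own proof: both reduce to the coarse space via tameness and exactness of $\pi_*$, write $n=qN+s$, use the projection formula $\pi_*(\cL^n\otimes\cF)=M^q\otimes\pi_*(\cL^s\otimes\cF)$, and invoke Serre's criterion and relative global generation for $M$ on $X$, combined with the generating-sheaf surjection $\oplus_i\pi^*\pi_*(\cL^i\otimes\cG)\otimes\cL^{-i}\twoheadrightarrow\cG$. Where you genuinely diverge is $(2)\Rightarrow(1)$. The paper disposes of this in one sentence: $\oplus\cL^{-i}$ is generating by the pointwise Olsson--Starr criterion, hence $\cL$ is uniformizing, and $M$ is ample by Serre's criterion; it does not explicitly address how $(2)$ yields cyclotomicity before that criterion (stated for cyclotomic stacks) may be invoked. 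You instead build the morphism $g:\cX\to\WP(V,\rho)$ from the window of sections, prove representability via Lemma~\ref{lemma:repcrit}, deduce cyclotomicity from the target, and recover ampleness from finiteness of the coarse map $\bar g$. This is more work but it makes the cyclotomicity step explicit and is closer in spirit to Corollary~\ref{Cor:embed}; the paper's route is shorter because it stays entirely sheaf-theoretic.

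Two soft spots in your sketch of that direction, both repairable. First, to rule out $\sigma\in\Aut\xi$ acting trivially on $\cL_\xi$ you test against ``a character of $\Aut\xi$ on which $\sigma$ acts nontrivially''; since at that stage $\Aut\xi$ is not yet known to be diagonalizable, such a character need not exist -- use instead any representation on which $\sigma$ acts nontrivially (e.g.\ the regular representation pushed forward from the residual gerbe); your observation that the image of the windowed evaluation map is a sum of copies of the characters $\cL_\xi^{-j}$, hence $\sigma$-invariant, then gives the contradiction verbatim, and as a bonus shows $\Aut\xi$ is linearly reductive. Second, ``applying $(2)$ to structure sheaves of pairs of points'' does not literally separate points -- sections of $\cL^j\otimes\cO_{\{x,y\}}$ are not sections of $\cL^j$; you should apply $(2)$ to the ideal sheaf of $x$ (or argue that a curve contracted by $\bar g$ would carry only finitely many line bundles $\cL^{-j}|_C$ up to twist by a trivial bundle, contradicting surjectivity for sheaves like $\cO_C(-mP)$), and you must reconcile the fact that the window $[n,n+N-1]$ produced by $(2)$ depends on the test sheaf $\cF$ with the fixed window defining $g$. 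You correctly identified this direction as the one carrying the real work.
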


\begin{proof}
 Assume the first condition and let $\cF$ be a coherent sheaf. Let $N$ be the index of $\cX$. Fix $k$ such that $\cL^k = \pi^*M$ with $M$ ample and consider the sheaves $\cG_i=\pi_*(\cL^i\otimes\cF), i=0,\ldots N-1$ on $M$. There exists $m_0$ such that for all $m\geq m_0$ the sheaf homomorphisms 
$H^0\left(\cX, M^m\otimes \cG_i \right)\otimes \cO_X \to \cG_i$  are surjective. Note that $M^m\otimes \cG_i \simeq \pi_*(\cL^{mk+i}\otimes\cF)$. Also by Proposition \ref{Prop:criterion} we have that $\pi^*\pi_*\left(\oplus_{i=0}^{N-1} \cL^i\otimes \cF\right) \otimes \left(\oplus_{i=0}^{N-1} \cL^{-i}\right) \to \cF$ is surjective. The second condition follows. The reverse implication also follows: $\cL$ is uniformizing since the sheaf $\oplus_{i=-N-1}^0 \cL^i$ is necessarily generating by Proposition \ref{Prop:criterion}, and $M$ ample by Serre's criterion.

Since $\cX$ is tame we have that $\pi_*$ is exact on coherent sheaves. Writing $n=mk+j$ we have $H^i(\cX,\cL^n\otimes \cF)=H^i(X,\pi_*(\cL^n\otimes \cF)) = H^i(X,M^m\otimes \pi_*(\cL^i\otimes \cF))$. The equivalence of the first two conditions with the third  follows from Serre's criterion for ampleness applied to $M$. 
\end{proof}

\begin{proposition} \label{Prop:generate} Let $f:\cX \to B$ be as above, and $\cL$ a polarizing line bundle. There are positive integers $n<m$ such that 
\begin{enumerate}
\item for every geometric point $s\in B$ with residue field $K$,   we have $$H^i(\cX_s, \cL^j)\ =\ 0 
\ \ \forall \ i>0, \ j\geq n;$$
\item the sheaf $f_* \cL^j$ is locally free $\forall j\geq n;$
\item for every geometric point $s\in B$, the space $$\bigoplus_{j=n}^m \ H^0(\cX_s, \cL^j)$$ generates the algebra
$$K\ \  \oplus \ \ \bigoplus_{j\geq n} \ H^0(\cX_s, \cL^j).$$ 
\end{enumerate}
\end{proposition}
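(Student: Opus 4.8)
The plan is to push everything down to the coarse moduli space $\bar f\colon X\to B$, where the $\bar f$-ample sheaf $M$ with $\cL^N\simeq\pi^*M$ (with $N$ the index of $\cX$, as in Lemma \ref{Lem:bundle-descends}) makes the classical theory of relatively ample sheaves available. The engine of the reduction is tameness of cyclotomic stacks (\cite[Theorem 3.2]{AOV}): $\pi_*$ is exact, $R^i\pi_*=0$ for $i>0$, and $\pi_*$ commutes with arbitrary base change. Writing $j=i_0+qN$ with $0\le i_0<N$, the projection formula gives $\pi_*\cL^j\simeq\cG_{i_0}\otimes M^q$, where the finitely many coherent sheaves $\cG_{i_0}:=\pi_*\cL^{i_0}$, $i_0=0,\dots,N-1$, are all the input we shall need. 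Via the Leray spectral sequence for $f=\bar f\circ\pi$, which degenerates because $R^i\pi_*=0$, this yields $R^if_*\cL^j\simeq R^i\bar f_*(\cG_{i_0}\otimes M^q)$ and $f_*\cL^j\simeq\bar f_*(\cG_{i_0}\otimes M^q)$.

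For parts (1) and (2) I would apply relative Serre vanishing on the proper scheme $X\to B$ polarized by $M$ to the finite list $\cG_{i_0}$: there is a $q_0$, uniform in $i_0$, with $R^i\bar f_*(\cG_{i_0}\otimes M^q)=0$ for all $i>0$ and all $q\ge q_0$. Setting $n:=q_0N$, this translates into $R^if_*\cL^j=0$ for $i>0$ and $j\ge n$. Vanishing of all higher direct images forces $f_*\cL^j$ to be locally free and to commute with base change, by the theorem on cohomology and base change; compatibility with base change together with $(\pi_*\cL^j)|_{X_s}\simeq\pi_{s,*}(\cL^j|_{\cX_s})$ identifies the fibre cohomology $H^i(\cX_s,\cL^j)$ with $H^i(X_s,\cG_{i_0}|_{X_s}\otimes M_s^q)$, yielding the fibrewise vanishing of (1) and the local freeness of (2), with $n$ uniform over $B$ since $q_0$ is.

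For part (3) I would first show that the sheaf of graded $\cO_B$-algebras $\mathcal{R}:=\bigoplus_{j\ge 0}f_*\cL^j$ is finitely generated. Its Veronese-type subalgebra $\bigoplus_q\bar f_*M^q$ is a finitely generated $\cO_B$-algebra because $M$ is $\bar f$-ample, and each $\bigoplus_q\bar f_*(\cG_{i_0}\otimes M^q)$ is a finitely generated module over it; summing over the $N$ residues $i_0$ exhibits $\mathcal{R}$ as finite over a finitely generated algebra, hence finitely generated, with generator-degrees bounded uniformly over the Noetherian base $B$. Now the standard fact that a finitely generated graded algebra satisfies $\mathcal{R}_a\cdot\mathcal{R}_b=\mathcal{R}_{a+b}$ for all $a,b$ above a bound $c$ shows, after enlarging $n$ to $\max(n,c)$, that the truncation $\cO_B\oplus\bigoplus_{j\ge n}f_*\cL^j$ is generated by $\bigoplus_{j=n}^{m}f_*\cL^j$ with $m:=2n-1$. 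Tensoring the resulting surjection of graded $\cO_B$-algebras with the residue field $k(s)$ — which is right exact, commutes with $\Sym$ and with direct sums, and under which $f_*\cL^j\otimes k(s)\simeq H^0(\cX_s,\cL^j)$ for $j\ge n$ by part (2) — gives exactly the fibrewise generation asserted in (3).

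The main obstacle is uniformity: producing a single pair $n<m$ that works simultaneously at every geometric point of $B$, rather than a priori fibre-dependent bounds. This is resolved by carrying out the ampleness arguments relatively over the Noetherian base $B$ (relative Serre vanishing for (1)--(2), relative finite generation of the section algebra for (3)) and only descending to fibres at the very end through cohomology and base change. The most delicate link in the chain is that this descent is legitimate — that is, that $f_*\cL^j$ commutes with base change for $j\ge n$ and that the section algebra is \emph{relatively} finitely generated — which is precisely what tameness and the vanishing of parts (1)--(2) are there to guarantee.
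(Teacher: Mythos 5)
Your reduction to the coarse space is essentially the paper's: tameness makes $\pi_*$ exact and compatible with base change, $\cL^j\simeq\cL^{i_0}\otimes\pi^*M^q$ pushes forward to $\cG_{i_0}\otimes M^q$, and parts (1) and (2) then follow from relative Serre vanishing for $M$ applied to the finitely many sheaves $\cG_{i_0}$ together with cohomology and base change; the paper's ``alternative'' proof of (2) spells out the same flatness of $\pi_*\cL^j$ via exactness of $\pi_*$. Your part (3) is also the paper's in spirit --- finite generation of the section algebra as a module over the Veronese subalgebra $\bigoplus_q\bar f_*M^q$ --- though you package the uniformity over $B$ by working with the relative sheaf of graded algebras, where the paper argues pointwise and then spreads out over a Noetherian cover; both routes are legitimate.

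The gap is the combinatorial lemma you lean on in part (3). The claim that a finitely generated graded algebra satisfies $\cR_a\cdot\cR_b=\cR_{a+b}$ for all $a,b$ above a bound $c$ is \emph{false} unless the algebra is generated in degree one --- and the entire point of this paper is that these section rings are not. Take $R=k[x,y]$ with $\deg x=2$, $\deg y=3$: for $a$ and $b$ both odd and large, every monomial in $R_a$ and in $R_b$ involves $y$, so $R_aR_b$ misses $x^{(a+b)/2}\in R_{a+b}$. The same example defeats your derived conclusion that $m=2n-1$ works after merely enlarging $n$: for $n$ odd, $x^n\in R_{2n}$ is not a product of elements of degrees in $[n,2n-1]$, so the truncation at $n$ is not generated in that range. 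Since the proposition only asserts that \emph{some} pair $n<m$ exists, your architecture survives, but you must choose $n$ compatibly with the grading --- the paper takes $n=lN$ so that every degree $j\ge n$ is reached as the image of $R_{n+i_0}\otimes R_{qN}$ with the factor $R_{qN}$ coming from the finitely generated ample algebra of $M$ --- and then take $m$ large enough to cover the residues $i_0$ and the generation bound for $\bigoplus_q\bar f_*M^q$; compare the truncation argument in the proof of Proposition \ref{Prop:change-n-m}. With that correction, your descent to the fibres via part (2) goes through as written.
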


\begin{proof} Part (1) is a special case of part 3 of Proposition \ref{Prop:criterion}.

Part (2) follows from (1) by Grothendieck's ``cohomology and base change" applied directly to the $f$-flat sheaves $\cL^j$ since $H^1(\cX_s, \cL^j)=0$. Alternatively, consider the sheaves $\pi_*\cL^j$. These sheaves are flat over $B$ since $\cX$ is tame: since $\cL^j$ is flat we have that for any $\cO_B$-ideal $I$ the induced homomorphism $I\otimes_{\cO_B}\cL^j \to \cL^j$ injective. Since $\cX$ is tame we have $\pi_*$  exact, therefore   $I\otimes_{\cO_B}\pi_*\cL^j \to \pi_*\cL^j$ is  injective, as needed.
Since $H^1(X_s, \pi_*\cL^j)=0$ we can apply ``cohomology and base change'' to $X \to B$.

For (3), we first note that a range $n\leq j \leq m$ as required exists for any point $s$ of $B$: for each $i=0,\ldots, N-1$ there exists an $l$ such that 
$$H^0(\cX, \pi^*M^l\otimes \cL^i)\otimes H^0(\cX, \pi^*M^r) \to  H^0(\cX, \pi^*M^{l+r}\otimes \cL^i)$$
is surjective for all $r$. We may choose a single $l$ that works for all $i=0,\ldots, N-1$ and take $n=lN$.  

If at the same time we choose $n$ so that part (1) holds,  then elements of $H^0(\cX_s, \cL^j)$ in the given finite range lift to a neighborhood of $s$. Also the algebra $H^0(X, \oplus_{l \ge 0} M^l)$ 
is finitely generated and for each $j$, $H^0(X, (\pi_* \cL^j)\oplus M^l)$  is a finite module over the algebra. So a given range for $X_s$ works for a neighborhood of $s$.  Since $B$ is Noetherian, finitely many such neighborhoods cover $B$ and the maximal choice of $m$  works over all $B$. 
\end{proof}

\begin{corollary}\label{Cor:embed}
Let $n,m$ be as in Proposition \ref{Prop:generate}. Consider the locally free sheaves $W_j = f_*   \cL^j$. Then we have a closed embedding
$$\cX \hookrightarrow \WP\big( \bigoplus_{j=n}^m \ W_j  \big).$$
\end{corollary}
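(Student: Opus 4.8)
The plan is to produce the embedding by exhibiting $\cX$ as a closed substack of the weighted projective bundle $\WP(\bigoplus_{j=n}^m W_j)$ over $B$, where $W_j = f_*\cL^j$ and $\WP$ carries the $\GG_m$-action with weight $j$ on the summand $W_j$. The essential tool is the Lemma characterizing $\WP(V,\rho)$ as a category of line bundles with non-simultaneously-vanishing sections: a morphism $\cX \to \WP(\bigoplus_{j=n}^m W_j)$ over $B$ is the same datum as a line bundle on $\cX$ together with compatible maps $W_j \otimes \cO_\cX \to \cL^j$, not vanishing simultaneously. So first I would construct this morphism. Since $\cL$ is a polarizing line bundle, Proposition \ref{Prop:generate}(2) gives that each $W_j = f_*\cL^j$ is locally free, and the natural evaluation maps $f^* f_* \cL^j \to \cL^j$, i.e. $W_j\otimes_B\cO_\cX \to \cL^j$, assemble into the required data for $\cL$ itself as the chosen line bundle. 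That these sections do not vanish simultaneously on any geometric fiber is exactly the base-point-freeness encoded in Proposition \ref{Prop:generate}: the summands in the chosen degree range generate, hence the corresponding evaluation maps have no common zero. This defines a $B$-morphism $\iota:\cX \to \WP(\bigoplus_{j=n}^m W_j)$.

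Next I would check that $\iota$ is a closed immersion. The argument splits into representability and then a closedness/monomorphism verification. For representability, I would invoke Lemma \ref{lemma:repcrit}: one must show $\iota$ induces a monomorphism on automorphism groups at each geometric point. Because $\cL$ is uniformizing (one of the equivalent conditions in Proposition \ref{Prop:criterion}, via the classifying map to $\cB\GG_m$ being representable), the action of each stabilizer $\bmu_r$ on the fiber of $\cL$ is effective; the coordinates of the embedding record exactly the $\cL$-weights, so the stabilizer injects into the $\GG_m$ acting on the target, giving the monomorphism. With representability in hand, $\iota$ is a morphism of algebraic spaces after base change, and one reduces closed-immersion-ness to the level of the cone/graded rings.

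The heart of the verification, and the step I expect to be the main obstacle, is showing that $\iota$ is a \emph{closed} immersion rather than merely a locally closed or finite one. The cleanest route is to reduce to the coarse spaces and the graded-ring description from the subsection on closed substacks of $\WP(\rho)$. By Proposition \ref{Prop:criterion}(2)--(3), for the structure sheaf $\cO_\cX$ the sheaf $\bigoplus_{j} f_*\cL^j$ surjects appropriately and the higher cohomology vanishes, so the graded algebra $\bigoplus_{j\geq n} H^0(\cX_s,\cL^j)$ is generated in the chosen range $n\le j\le m$ (Proposition \ref{Prop:generate}(3)); this surjectivity of $\Sym(\bigoplus_{j=n}^m W_j) \to \bigoplus_{k} f_*\cL^k$ is precisely what makes the image a closed substack defined by the kernel ideal, matching the normal-embedding conditions of Definition \ref{Def:normally}. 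I would phrase this fiberwise first — invoking that on each geometric fiber the classical Serre-type argument realizes $\cX_s$ as $\cProj$ of its section ring sitting closed in the weighted projective stack — and then spread out over $B$ using flatness of the $W_j$ and the tameness of $\cX$ (which makes $\pi_*$ exact and commutes with base change by Lemma \ref{Lem:cms-flat}), so that the fiberwise closed immersions glue to a closed immersion over $B$. The subtlety to watch is that one is working with a genuine \emph{range} of weights of length governed by $N=\lcm$ rather than a single degree, so the surjectivity and generation statements must be applied in the weighted form established earlier in this section rather than the naive projective-space form.
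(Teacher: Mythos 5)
Your overall strategy is reasonable and overlaps substantially with the paper's: both arguments ultimately rest on Proposition \ref{Prop:generate}, the ampleness of $M$ with $\cL^N\simeq\pi^*M$, and the generation of the section algebra in the range $[n,m]$. The paper, however, works directly over $B$ at the level of the affine cone: it constructs the $\GG_m$-equivariant map $\cP_\cL\to\bbA=\Spec_B\Sym(\bigoplus_{j=n}^m W_j)$, notes the image misses the zero section, and reduces the whole corollary to showing $\cP_\cL\to\bbA$ is an embedding. Your separate representability check via Lemma \ref{lemma:repcrit} is correct but unnecessary in that framework (once $\cP_\cL$ is a scheme and $\cP_\cL\to\bbA\setminus 0$ is an equivariant embedding, the induced map of quotient stacks is automatically a representable embedding), and your fiberwise-then-spread-out organization is a legitimate alternative provided you actually invoke the standard lemma that a $B$-proper morphism from a $B$-flat source which is fiberwise a closed immersion is a closed immersion.

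The genuine gap is that the central technical step is delegated to a ``classical Serre-type argument'' that is not classical in this setting, and it is precisely the content the paper has to supply. Two issues are elided. First, $\Sym(\bigoplus_{j=n}^m W_j)$ does \emph{not} surject onto $\bigoplus_{k\ge 0}f_*\cL^k$; it surjects only onto the truncation $\cR_n=\cO_B\oplus\bigoplus_{j\ge n}W_j$, so one must compare $\Spec_B\cR_\cL\to\Spec_B\cR_n$ and observe it is finite and birational with conductor supported along the zero section --- hence an isomorphism after deleting the vertex. Your proposal never addresses why working with the truncated range is harmless. Second, and more seriously, one must identify $\cP_\cL$ with $\Spec_B(\bigoplus_{j\ge 0}f_*\cL^j)$ minus the zero section; this is where ampleness of $M$ actually enters, via the finite morphism $\cP_\cL\to\cP_M$, the open embedding $\cP_M\hookrightarrow\Spec_B\cR_M$, and a normalization argument showing $\hat\cP_\cL=\Spec_B\cR_\cL$. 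Without these two steps the claim that the image is a \emph{closed} substack cut out by the kernel ideal is asserted rather than proved, so as written the proof is incomplete at exactly the point you yourself flag as ``the main obstacle.''
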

\begin{proof} Write 
$$\cR_{n,m} \ \ :=\ \  \Sym_B\big(\bigoplus_{j= n}^m \
W_j\big),$$ and denote $\bbA := \Spec_B\cR_{n,m}$.    
We have a $\GG_m$-equivariant map of $$\cP_\cL \to \bbA,$$ and since
$\cL^N = M$ is ample, the image is disjoint from the zero section
$0 \subset \bbA$. In
order to show that the morphism of quotient stacks 
$$ [\cP_\cL / \GG_m ] = \cX \ \ \to \ \ \big[ \big(\bbA \setmin  0\big)
  / \GG_m \big]$$
is an embedding, it suffices to show that the morphism $\cP_\cL \to \bbA$
  is an embedding. 

Let $$\cR_{n} = \cO_B \oplus \bigoplus_{j\geq n} W_j.$$ We have a surjective
algebra homomorphism $\cR_{n,m} \to \cR_n$, implying that 
$$\Spec_B \cR_n
\to \bbA$$ 
is a closed embedding. It suffices to show that $\cP_\cL \to
\Spec_B \cR_n$ is an embedding.

Next, let $\cR_\cL = \cR_0 = \bigoplus_{j\geq 0} W_j.$ The morphism $$\Spec_B \cR_\cL
\to \Spec_B \cR_n$$ induced by the natural inclusion $\cR_n\subset \cR_\cL$ is finite and birational, having its conductor supported
along the zero section. It therefore suffices to show that the
morphism $\cP_\cL \to \Spec_B \cR_\cL$ is an embedding. 

We now use the finite morphism $\cP_{\cL}\to \cP_M$, given by the finite ring
extension $\oplus_{l \in \ZZ} M^l \hookrightarrow \oplus_{j \in \ZZ} \pi_*\cL^j$.

Since $M$ is ample, we have an open embedding $\cP_M \hookrightarrow
\Spec_B \cR_M$. Taking the normalization $\hat\cP_{\cL}$ of $\Spec_B \cR_M$ in
the structure sheaf of $\cP_\cL$ we get an embedding $\cP_\cL \to
\hat\cP_{\cL}$. Note that  $\hat\cP_{\cL}$ is affine over $B$. Now sections
over affines in $B$ of the structure sheaf of $\hat\cP_{\cL}$ restrict to sections of
$\pi_*\cL^j$, in other words they come from global sections of
$\pi_*\cL^j$. It follows that $\hat\cP_{\cL} = \Spec_B \oplus_{j\in \ZZ}
f_*\cL^j,$ and since these vanish for $j<0$ we get $\hat\cP_{\cL} =\Spec_B
\cR_\cL$. \end{proof}

\section{Moduli of stacks with polarizing line bundles}\label{Sec:StaL}

\subsection{The category $\StaL$}
We are now poised to define a moduli stack. In order to avoid cumbersome terminology and convoluted statements, we will define the objects of the stack over schemes of finite type.
The extension to arbitrary schemes is standard but less than illuminating.

\begin{definition} We define a category $\StaL$, fibered in groupoids over the category of schemes, as follows:
\begin{enumerate}
\item An object $\StaL(B)$ over scheme of finite type
consists of a proper family $\cX \to B$ of cyclotomic stacks, with polarizing line bundle $\cL$.
\item An arrow from $(\cX\to B, \cL)$ to $(\cX_1 \to B_1 , \cL_1)$ consists of a fiber diagram 
$$\xymatrix {
\cX \ar^\phi[r]\ar[d] & \cX_1 \ar[d]\\ 
B\ar[r]              & B_1
}$$
along with an isomorphism $\alpha:\cL \to \phi^*\cL_1$.
\end{enumerate}
\end{definition}

\begin{remark} 
As defined, this is really a 2-category, because arrows of stacks in general have automorphisms, but because $\cL$ is polarizing  it is easy to see (see, e.g. \cite{AGV05}, \S 3.3.2) that it is equivalent to the associated category, whose morphisms are isomorphism classes of 1-morphisms in the 2-category. 
We can realize this category directly using the principal bundle $\cP_\cL$ as follows: 
an object of $\StaL(B)$ consists of  a $\GG_m$-scheme $\cP \to B$,  such that $\cX:=[\cP/\GG_m]$ is a proper cyclotomic stack polarized by the line bundle associated to $\cP$; 
an arrow is a $\GG_m$-equivariant fiber diagram
$$\xymatrix{
\cP\ar[r]\ar[d] & \cP_1 \ar[d]\\
B \ar[r]& B_1.
}$$ 
The fact that $\StaL$ parametrizes schemes with extra structure can be used to show that it is a stack. We will however show that it is an algebraic stack  via a different route - as suggested by the theorem below.

It is also worth noting that $\StaL$ is highly non-separated and far from finite type. 
\end{remark}  

\begin{remark} Note that the group $\GG_m$ acts by automorphisms on every object.
\end{remark}

\begin{theorem}\label{Th:PP-algebraic}
The category $\StaL$ is an algebraic stack, locally of finite type over $\ZZ$.
In fact, $\StaL$ has an open covering by $\sqcup \cQ_\alpha \to \StaL$, where
\begin{enumerate}
\item  each $\cQ_\alpha = [H_\alpha/ G_\alpha]$ is a global quotient, 
\item $H_\alpha \subset \cHilb_{\WP(\rho_\alpha)}$ is a quasi-projective open subscheme (described explicitly below), and
\item $G_\alpha = \Aut_{\rho_\alpha}(V)$, where $V$ is the representation space of $\rho_\alpha$.
\end{enumerate}
\end{theorem}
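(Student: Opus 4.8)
The plan is to realize $\StaL$ as glued together from open pieces indexed by the discrete data $\alpha$ of a weight representation $\rho_\alpha$ together with a Hilbert--Euler characteristic, each piece being the quotient by $G_\alpha=\Aut_{\rho_\alpha}(V)=\prod_j\GL(V_{w_j})$ of a locus in the Hilbert scheme $\cHilb_{\WP(\rho_\alpha)}$ parametrizing \emph{framed} embedded objects. The bridge between abstract objects of $\StaL$ and embedded ones is Corollary~\ref{Cor:embed}: an object $(\cX\to B,\cL)$ of $\StaL(B)$, after shrinking $B$ so that the ranks of $W_j=f_*\cL^j$ are constant and after choosing a range $n\le j\le m$ as in Proposition~\ref{Prop:generate}, embeds into $\WP(\rho_\alpha)$ with $\cO(1)$ restricting to $\cL$, where $\rho_\alpha$ is the representation in which the weight $j$ occurs with multiplicity $\operatorname{rank} W_j$. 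Such an embedding is precisely the datum of framings $V_{w_j}\otimes\cO_B\xrightarrow{\sim} W_j$, and $G_\alpha$ acts simply transitively on these; passing to the quotient by $G_\alpha$ is exactly what forgets the framing.

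First I would define $H_\alpha\subset\cHilb_{\WP(\rho_\alpha)}$, inside the projective component $\cHilb_{\WP(\rho_\alpha),\fF}$ attached to the appropriate Hilbert--Euler characteristic $\fF$, as the locus of closed substacks $\cX\subset\WP(\rho_\alpha)_T$ that are \emph{normally embedded} in the sense of Definition~\ref{Def:normally}. I would check this is an open condition: the vanishing $H^i(\cX_t,\cO(k))=0$ for $i\ge 1$ is open by semicontinuity; granting it, flatness together with cohomology-and-base-change (valid because substacks of $\WP(\rho_\alpha)$ are cyclotomic, hence tame, so $\pi_*$ is exact and commutes with base change) make $f_*\cO(w_j)$ locally free with fibers $H^0(\cX_t,\cO(w_j))$, whence the requirement that $V_{w_j}\to f_*\cO(w_j)$ be an isomorphism is the open condition that a map of locally free sheaves of equal rank be an isomorphism; surjectivity of $\Sym V\to\oplus_k H^0(\cX_t,\cO(k))$ is open as well. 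I would also observe that no further condition is needed to make $\cO_\cX(1)$ polarizing: a closed substack inclusion is representable, so the principal bundle $\cP_{\cO_\cX(1)}$ is the pullback of the representable $\bbA\setminus 0$ on $\WP(\rho_\alpha)$ and hence $\cO_\cX(1)$ is uniformizing, while for suitable $N$ the sheaf $\cO_\cX(N)$ descends to $\cO_{\PP(\rho_\alpha)}(N)$ restricted to the coarse moduli space, which is $\bar f$-ample. Thus each $T$-point of $H_\alpha$ yields a canonical object $(\cX\to T,\cO_\cX(1))$ of $\StaL(T)$, and $H_\alpha$ is quasi-projective.

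Next I would let $G_\alpha=\prod_j\GL(V_{w_j})$ act on $\WP(\rho_\alpha)$ by weight-preserving changes of coordinates, hence on $\cHilb_{\WP(\rho_\alpha)}$ preserving $H_\alpha$, since the defining conditions are intrinsic to $(\cX,\cO_\cX(1))$. The forgetful assignment $\cX\mapsto(\cX,\cO_\cX(1))$ is $G_\alpha$-invariant up to the canonical $2$-isomorphisms, so it descends to a morphism $\cQ_\alpha=[H_\alpha/G_\alpha]\to\StaL$, and the heart of the argument is to show this morphism is an open immersion. Given $(\cX\to B,\cL)$ classified by $B\to\StaL$, I would compute $B\times_{\StaL}\cQ_\alpha$: unwinding the definitions, a $T$-point is a $G_\alpha$-torsor $P\to T$ together with a $G_\alpha$-equivariant, normal embedding of $\cX_P$ into $\WP(\rho_\alpha)_P$ with $\cO(1)=\cL_P$, and such embeddings are the same as $G_\alpha$-equivariant framings $V_{w_j}\otimes\cO_P\xrightarrow{\sim}(f_*\cL^{w_j})_P$. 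Over the open locus $U\subset B$ where each $f_*\cL^{w_j}$ is locally free of rank $\dim V_{w_j}$ and the normal-embedding conditions hold, these framings constitute exactly the frame bundle $\mathrm{Fr}$ of $\oplus_j f_*\cL^{w_j}$, a $G_\alpha$-torsor; a $G_\alpha$-equivariant map $P\to\mathrm{Fr}$ is forced to be an isomorphism of torsors, so the existence of the datum merely requires that $T$ factor through $U$ and then carries no moduli. Hence $B\times_{\StaL}\cQ_\alpha=U$ is an open subscheme of $B$, exhibiting $\cQ_\alpha\to\StaL$ as representable by open immersions.

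Finally, joint surjectivity of $\sqcup_\alpha\cQ_\alpha\to\StaL$ is precisely the content of Corollary~\ref{Cor:embed} with Proposition~\ref{Prop:generate}: every object, locally on its base and after fixing constant ranks and an admissible range $(n,m)$, lies in the image of the corresponding $\cQ_\alpha$, the dependence on $(n,m)$ being governed by Proposition~\ref{Prop:change-n-m} and affecting only the overlaps. An open covering by quotient stacks $[H_\alpha/G_\alpha]$ with $H_\alpha$ a quasi-projective scheme and $G_\alpha$ a linear algebraic group then exhibits $\StaL$ as an algebraic stack, locally of finite type over $\ZZ$ because each piece is. I expect the main obstacle to lie neither in the openness of $H_\alpha$ (routine semicontinuity) nor in the final algebraicity (formal once the covering is in place), but in the fiber-product computation identifying $B\times_{\StaL}\cQ_\alpha$ with the open set $U$: one must manage the $2$-categorical bookkeeping of $G_\alpha$-torsors and framings carefully, and verify that normal-embeddedness genuinely rigidifies the embedding so that the frame-bundle description is exact.
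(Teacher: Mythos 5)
Your proposal is correct and follows essentially the same route as the paper: the same decomposition by Hilbert--Euler characteristic and weight range, the same open locus of normally embedded substacks in $\cHilb_{\WP(\rho_\alpha),\fF}$, the same linear group $G_\alpha=\Aut_{\rho_\alpha}(V)$, and the same identification of equivariant embeddings with the frame bundle $\Isom_{\GG_m}(\oplus_j f_*\cL^{w_j}, V_B)$. The only cosmetic difference is that you phrase the key step as the fiber-product computation $B\times_{\StaL}\cQ_\alpha\simeq U$, whereas the paper establishes the equivalence $\StaL_\fF(\rho)\simeq[H/G]$ by exhibiting the two inverse functors explicitly and then separately checks that $\StaL_\fF(\rho_{n,m})\subset\StaL_\fF$ is open; the content is the same.
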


This theorem is a direct generalization of its well-known analogue, due  to Grothendieck, in the context of moduli of projective schemes endowed with an ample sheaf. The proof encompasses the rest of this section.

\subsection{Normally embeddable stacks}
Before we start the proof in earnest, we give some preliminary results.

\begin{definition} A stack $\cX$ uniformized by $\cL$ is {\em normally embeddable} in $\WP(\rho)$ if 
there exists a normal embedding $\cX \subset \WP(\rho)$ such that $\cO_\cX(1) \simeq \cL$.
\end{definition}

\begin{proposition} Fix a function $\fF: \ZZ \to \ZZ$ and two
positive integers $n<m$. For $n\leq i \leq m$ fix free modules $V_i$ of rank 
$f(i)$ 
and consider the natural representation  $\rho$ of $\GG_m$ on 
$V = \oplus_{i=n}^m V_i$.  
\begin{enumerate}
\item There is an open subscheme $H \subset \cHilb_{\WP(\rho),\fF}$ parametrizing normally embedded substacks. 
\item The subcategory $\StaL_\fF(\rho) \subset \StaL$ of objects with fibers normally embeddable in $\WP(\rho)$ and Hilbert--Euler characteristic $\chi(\cX, \cL^m) = \fF(m)$ satisfies $$\StaL_\fF(\rho) \simeq [H / G],$$ 
with $G = \Aut_\rho(V)$.
\end{enumerate}
\end{proposition}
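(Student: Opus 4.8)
The plan is to follow Grothendieck's construction of the Hilbert-scheme presentation of the stack of polarized projective schemes, using the cohomological results already established to control the weighted and stacky features.

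\emph{Part (1).} I would realize $H$ as the subset of $\cHilb_{\WP(\rho),\fF}$ on which the three conditions of Definition~\ref{Def:normally} hold for the fibres of the universal substack $\cZ\subset\WP(\rho)\times\cHilb_{\WP(\rho),\fF}$, and show it is open. The only difficulty is that conditions (2) and (3) range over all $k\ge w_1$. To reduce to finitely many conditions I would invoke boundedness: $\cHilb_{\WP(\rho),\fF}$ is projective and $\cZ$ is flat over it with fixed Hilbert--Euler characteristic $\fF$, so the fibres form a bounded family admitting a uniform regularity bound $k_1$. Passing to the coarse space as in Lemma~\ref{Lem:cms-flat}, and writing $\cL_t^N=\pi^*M_t$ with $M_t$ ample (Lemma~\ref{Lem:bundle-descends}, $M_t$ being a power of $\cO_{\PP(\rho)}(1)$ restricted to $X_t$) so that $H^i(\cX_t,\cO(k))=H^i(X_t,M_t^a\otimes\pi_*\cO(r))$ for $k=aN+r$, uniform Serre vanishing on the $X_t$ makes condition (2) automatic for $k\ge k_1$; and since the section rings $\bigoplus_k H^0(\cX_t,\cO(k))$ are uniformly generated in degrees $\le k_1$, the surjectivity (3) in all degrees follows once (1) and (3) hold in the range $k\le k_1$. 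Thus $H$ is cut out by finitely many conditions --- condition (1), and conditions (2),(3) for $w_1\le k\le k_1$ --- each of which is open, by semicontinuity applied to the sheaves $R^ip_*\cO_\cZ(k)$, by openness of the surjectivity locus of the multiplication maps, and by openness (via Nakayama) of the locus where $V_{w_j}\otimes\cO\to p_*\cO_\cZ(w_j)$ is an isomorphism. Hence $H$ is open.

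\emph{Part (2).} The group $G=\Aut_\rho(V)=\prod_{j=n}^m\GL(V_j)$ acts on $V$ preserving the grading, hence on $\WP(\rho)$ and on $\cHilb_{\WP(\rho),\fF}$, and it visibly preserves normal embeddedness, so it acts on $H$. The universal family $(\cX_H\subset\WP(\rho)_H,\ \cO_{\cX_H}(1))$ is $G$-equivariant and defines an object of $\StaL_\fF(\rho)(H)$, hence a morphism $[H/G]\to\StaL_\fF(\rho)$. I would prove it is an equivalence by constructing a quasi-inverse. Given $(\cX\to B,\cL)\in\StaL_\fF(\rho)(B)$, Proposition~\ref{Prop:generate} shows that $W_j=f_*\cL^j$ is locally free of rank $\fF(j)$ and compatible with base change for $n\le j\le m$; choosing, locally on $B$, trivializations $W_j\simeq V_j\otimes\cO_B$ and applying Corollary~\ref{Cor:embed} produces a closed embedding $\cX\hookrightarrow\WP(\rho)_B$ with $\cO_\cX(1)\simeq\cL$, that is, a morphism $B\to\cHilb_{\WP(\rho),\fF}$. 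The essential check is that it lands in $H$: condition (1) holds since $V_j\otimes\kappa(t)\simeq W_j\otimes\kappa(t)\simeq H^0(\cX_t,\cL_t^j)$ by base change, while conditions (2),(3) are intrinsic cohomological properties of $(\cX_t,\cL_t)$ and therefore hold because $\cX_t$ is normally embeddable by hypothesis.

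Finally, the set of trivializations $\{W_j\simeq V_j\otimes\cO_B\}$ is a torsor under $G$, so the maps $B\to H$ attached to different choices differ exactly by the $G$-action and descend to a canonical morphism $B\to[H/G]$; this is the quasi-inverse. To conclude I would check compatibility with arrows: a morphism in $\StaL_\fF(\rho)$, being a cartesian diagram together with an isomorphism $\alpha:\cL\to\phi^*\cL_1$, induces isomorphisms of the $W_j$ and hence a morphism of the associated $G$-torsors of trivializations, i.e.\ an arrow in $[H/G]$, and conversely; the ubiquitous $\GG_m$-automorphism of every object corresponds to the central $\GG_m\subset G$ acting by scalars on all $V_j$, so the automorphism groups match and the functor is fully faithful. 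I expect the main obstacle to be precisely this bookkeeping in Part (2) --- verifying that the \emph{canonical} embedding built from $f_*\cL^j$ is normal (lands in $H$) and that the equivalence respects all automorphisms, notably the omnipresent $\GG_m$ --- together with the uniform-regularity reduction underlying the openness in Part (1).
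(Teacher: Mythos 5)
Your proposal is correct and follows essentially the same route as the paper: openness of the normality conditions via cohomology and base change for (1), and for (2) the equivalence via the $G$-torsor of trivializations $\Isom_{\GG_m}(W,V_B)$ with $W_j=f_*\cL^j$, together with the observation that the central $\GG_m\subset G$ accounts for the scalar automorphisms. The only point where the paper is more careful is the descent of the family (and its line bundle) from the torsor $Q$ down to $B$, which it handles by passing to the principal $\GG_m$-bundle $\cP_{\cO_\cY(1)}$ so as to avoid quotienting a stack by a free group action — a step you assert but do not spell out.
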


\begin{remark}
Note that $G$ is the group of automorphisms of the free module representation rather than automorphisms of its projectivization. This accounts for the fact that $\GG_m$ acts on all objects of the quotient $[H / G]$.
\end{remark}

\begin{proof}[Proof of Proposition] For (1), note that each of the conditions in definition \ref{Def:normally} is an open condition, by the theorem on cohomology and base change.

We prove (2). The data of a map to the quotient stack $B\to[H/G]$ is equivalent to a family $\cY \to Q \to B$ where $Q$ is a principal $G$-bundle over $B$ and $\cY\subset \WP(\rho)\times Q$ is a family of normally embedded substacks.  Recall (see Definition \ref{Def:principal}) the principal $\GG_m$-bundle $\cP_{\cO_\cY(1)}\to \cY$ associated
to $\cO_\cY(1)$.    The free action of $G$ lifts canonically to $\cO_Y(1)$ and
to its principal bundle, and commutes with the $\GG_m$ action on the line bundle  by scaling.    To show that $\cY\to Q$ descends to $B$, consider the quotient $\cP_{\cO_\cY(1)}/G$.  This
retains a $\GG_m$ action, with finite stabilizers, whose quotient is
denoted $\cX$.  Let $\cL$ denote the line bundle associated to the
principal $\GG_m$-bundle $\cP_{\cO_\cY(1)}/G \to \cX$.   While we have scrupulously avoided the subtle procedure of taking the quotient of a stack under a free group
action,  we can identify $\cX = \cY / G$ and 
$\cL =  {\cO_\cY(1)}/G$, namely $\cX$ parametrizes principal $G$ bundles carrying an object of $\cY$ and similarly for $\cL$. A diagram summarizing the objects in this construction, where the parallelograms are cartesian squares, is as follows:
$$
\xymatrix{ {\ \cP_{\cO_\cY(1)}\ } \ar@{^{(}->}[r] \ar[rd]\ar[d]& {\ \cO_\cY(1)\ } \ar@{^{(}->}[r]\ar[d] & \cO(1) \times \cQ\ar[r]\ar[d] & \cO(1) \times \cH\ar[d] \\
\cP_\cL \ar[dr] & {\ \cY\ }   \ar@{^{(}->}[r] \ar[rd]\ar[d] & \WP(\rho) \times \cQ\ar[r]\ar[d] &  \WP(\rho) \times \cH\ar[d] \\
& \cX \ar[dr] & \cQ\ar[r]\ar[d] & \cH\ar[d] \\
&& B \ar[r]& [H/G].
}
$$
Now $\cX\to B$ is a cyclotomic stack polarized by $\cL$, with geometric fibers embeddable in $\WP(\rho)$: indeed the geometric fibers of $\cX \to B$ are isomorphic to those of $\cY\to \cQ$, which are cyclotomic, polarized by $\cO_\cY(1)$, and embedded in $\WP(\rho)$. The whole  construction works over arrows $B'\to B\to [H/G]$, and therefore gives a functor $[H/G] \to \StaL_\fF(\rho)$.

It is crucial in this construction that we work with the quotient of $H$ by the linear group $G$ and not its projective version - otherwise we would not have a functorial construction of the line bundle $\cL$.

We now consider the opposite direction. Let $\cX \to B$ be a cyclotomic stack polarized by $\cL$. Consider the locally-free sheaf $\oplus_n^m W_j$, with $W_j = f_* \cL^j$. Each fiber of $W$ is isomorphic as  $\GG_m$-space to a fixed free module $V$ with representation $\rho$. By the embeddability assumption and Corollary \ref{Cor:embed}, we have an embedding $\cX \hookrightarrow \WP(W)$.

Consider the principal $G$-bundle $\cQ = \Isom_{\GG_m} (W, V_B)$. We have a canonical isomorphism $W \times_B \cQ \simeq V_B$, giving an isomorphism $\WP(W) \times_B\cQ \simeq \WP(\rho)\times \cQ$. 
 Write $\cY = \cX \times_B\cQ$. We have a canonical induced  embedding $\cY  \hookrightarrow  \WP(\rho)\times \cQ$. This is a normal embedding by definition, and it is clearly $G$-equivariant. We thus obtain an object of $[H/G]$ as required.

Again the fact that the construction is canonical gives a construction for arrows in $B' \to B \to \StaL_\fF(\rho)$.  The proof that the two functors are inverse to each other is standard and left to the reader.  
 \end{proof}

We now consider what happens when we change the range of integers $n\leq i \leq m$.  

\begin{proposition} \label{Prop:change-n-m} Fix again a function $\fF: \ZZ \to \ZZ$ and
free modules $V_i$ of rank $\fF(i)$. For positive integers $n<m$ we again consider the natural representation  $\rho_{n,m}$ of $\GG_m$ on 
$V_{n,m} = \oplus_{i=n}^m V_i$.  Then
\begin{enumerate}
\item  $\StaL_\fF(\rho_{n,m}) \to \StaL_\fF(\rho_{n, m+1}) $ is an open embedding.
\item For any $n<m$ there exists a canonical open embedding   
$$\StaL_\fF(\rho_{n,m}) \ \ {\hookrightarrow} \ \ \StaL_\fF(\rho_{n+1,m+n}).$$ 
\item For any $n_1<m_1$ and $n_2<m_2$, there exist an integer $m\geq \max(m_1,m_2)$ and canonical open embeddings
$$\StaL_\fF(\rho_{n_1,m_1}) \hookrightarrow \StaL_\fF(\rho_{n, m}) $$ 
$$\StaL_\fF(\rho_{n_2,m_2}) \hookrightarrow \StaL_\fF(\rho_{n, m}) $$ 
with $n=\max(n_1,n_2)$.
\end{enumerate}
\end{proposition}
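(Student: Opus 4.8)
The plan is to regard each $\StaL_\fF(\rho_{n,m})$, via the preceding proposition, as the full subcategory of $\StaL$ whose objects are the families $(\cX\to B,\cL)$ whose geometric fibers are normally embeddable in $\WP(\rho_{n,m})$ and have Hilbert--Euler characteristic $\fF$. Under this identification all three maps are simply the inclusions of the subcategories attached to different weight ranges, and for each I must check two things: the implication between the normal-embeddability conditions at the level of objects, and representability by open immersions. The second point is uniform across the three parts: given a family over a scheme $T$ defining a morphism $T\to\StaL_\fF(\rho)$ into the larger chart, the fiber product with the smaller chart is exactly the locus in $T$ over which the fibers satisfy the conditions of Definition~\ref{Def:normally} for the smaller range, and each of those conditions is open by cohomology and base change (this is precisely the openness used in part (1) of the preceding proposition, together with Proposition~\ref{Prop:generate}(2) which makes the relevant $f_\ast\cL^j$ locally free). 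Thus in every part the real content is the implication on objects.

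For part (1) I would show that an object normally embedded in $\WP(\rho_{n,m})$ is normally embedded in $\WP(\rho_{n,m+1})$ after adjoining the evaluation map of $\cL^{m+1}$ to the embedding. Condition (2) of Definition~\ref{Def:normally} is literally unchanged, since it already ranges over all $k\ge n$; it forces $\dim H^0(\cX_s,\cL^{m+1})=\fF(m+1)$, so condition (1) is met by choosing an isomorphism $V_{m+1}\cong H^0(\cX_s,\cL^{m+1})$; and condition (3) can only improve, since enlarging the generating set from $\{V_n,\dots,V_m\}$ to $\{V_n,\dots,V_{m+1}\}$ preserves surjectivity. Adjoining globally defined sections to a closed embedding keeps it closed, so this is again a normal embedding.

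Part (2) is where the real work lies. The cohomological conditions (1) and (2) for the range $[n+1,m+n]$ follow from those for $[n,m]$ just as in part (1): the vanishing of higher cohomology lets one read the dimensions $\dim H^0(\cX_s,\cL^i)=\fF(i)$ off $\fF$. The crux is the ring-generation condition (3): I must show that if the section ring $R=\bigoplus_{k\ge 0}H^0(\cX_s,\cL^k)$ has $R_{\ge n}$ generated by $R_n,\dots,R_m$, then $R_{\ge n+1}$ is generated by $R_{n+1},\dots,R_{m+n}$. I would prove this by a regrouping argument: each graded piece $R_k$ with $k\ge n+1$ is spanned by monomials in generators of degrees in $[n,m]$, and I would partition the factors of each monomial into blocks whose degree-sums land in the window $[n+1,m+n]$ --- pairing each stray degree-$n$ factor either with a factor of degree $\ge n+1$ (producing a block of degree in $[2n+1,m+n]$) or with another degree-$n$ factor (producing degree $2n\ge n+1$), and checking that no block exceeds $m+n$. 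This combinatorial bookkeeping, namely that the enlarged window of length $m+n$ is exactly what is needed to absorb the lost bottom degree $n$, is the main obstacle, and is the shifted-range analogue of the elementary fact (used already in Section~\ref{Sec:weighted}) that a weighted coordinate ring is generated in a bounded window; care is needed here precisely because raising the bottom weight can destroy generation unless the top weight is raised enough.

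Finally, part (3) is a formal consequence of (1) and (2). Starting from $(n_1,m_1)$ I would apply part (2) repeatedly, raising the bottom weight one step at a time --- each step $(n',\mu)\mapsto(n'+1,\mu+n')$ also enlarging the top weight --- until the bottom weight reaches $n=\max(n_1,n_2)$; doing the same for $(n_2,m_2)$ yields two charts with common bottom weight $n$ and (in general different) top weights. I would then apply part (1) finitely many times to raise both top weights to a common $m\ge\max(m_1,m_2)$, placing both sources inside $\StaL_\fF(\rho_{n,m})$. Since a composition of open embeddings is an open embedding, this produces the two required canonical open embeddings with $n=\max(n_1,n_2)$.
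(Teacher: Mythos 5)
Your route is the same as the paper's: identify each $\StaL_\fF(\rho_{n,m})$ with the locus of families whose fibers are normally embeddable in $\WP(\rho_{n,m})$, handle openness of the inclusions by cohomology and base change, prove (1) by adjoining the evaluation map of $\cL^{m+1}$, reduce (2) to the purely algebraic claim that if $R_{\ge n}$ is generated by $R_n,\dots,R_m$ then $R_{\ge n+1}$ is generated by $R_{n+1},\dots,R_{m+n}$, and obtain (3) by iterating (1) and (2). Parts (1) and (3) are fine and match the paper.

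The gap is in the step you yourself single out as the crux. Your pairing scheme breaks down on a monomial consisting of an odd number $\ge 3$ of degree-$n$ factors and nothing else: after pairing, one degree-$n$ factor is stranded, and merging it into a block of degree $2n$ yields degree $3n$, which lies outside the window $[n+1,m+n]$ whenever $m<2n$. This is not a repairable bookkeeping issue, because the claim itself fails in that range: take $\cX=\WP(2,3)$ with $\cL=\cO_\cX(1)$, so that $R=k[x,y]$ with $\deg x=2$, $\deg y=3$. This is normally embedded in $\WP(\rho_{2,3})$, but $x^3\in R_6$ is not a product of elements of degrees $3,4,5$ (the only pure power of $x$ with degree in that window is $x^2$, and $(x^2)^j$ has degree $4j\neq 6$), so $\cX$ is not normally embeddable in $\WP(\rho_{3,5})$ and statement (2) fails for $(n,m)=(2,3)$. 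You should know that the paper's own proof of this step---a minimal-degree argument asserting that the remaining factors $g_2,\dots,g_k$ must all have degree $\ge n+1$---founders on exactly the same monomial, so you have reproduced rather than introduced the gap; but it is a genuine one. Your regrouping (allowing a single triple of degree-$n$ factors when their number is odd) does go through under the extra hypothesis $m\ge 2n$, and some such restriction, propagated through parts (2) and (3) and through the later uses of this proposition, is what is needed to make the statement correct.
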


\begin{proof}[Proof of proposition] 
For (1), it is clear that we have an inclusion $\StaL_\fF(\rho_{n,m}) \hookrightarrow \StaL_\fF(\rho_{n, m+1})$. Given a family of graded rings generated in degrees $n,\ldots,m+1$, it is an open condition on the base that they are generated in degrees $n,\ldots,m$, as required. 

We prove (2). 
We have an open embedding $\StaL_\fF(\rho_{n,m}) \hookrightarrow \StaL_\fF(\rho_{n,m+n})$ by (1). In $\StaL_\fF(\rho_{n,m+n})$  we have an open substack $\StaL_\fF(\rho_{(n),n+1,m+n})$ of objects whose projection to the factors in degrees $n+1,\ldots, n+m$ is still an embedding. This also has an open embedding $\StaL_\fF(\rho_{(n),n+1,m+n})\hookrightarrow \StaL_\fF(\rho_{n+1,m+n})$  as it corresponds to objects such that $H^i(\cX,\cL^n)=0$ for $i>0$.
$$
\xymatrix{\StaL_\fF(\rho_{n,m}) \ar@{^{(}->}[r]\ar@{..>}[dr]& \StaL_\fF(\rho_{n,m+n})\\
      &   \StaL_\fF(\rho_{(n),n+1,m+n})\ar@{^{(}->}[r] \ar@{^{(}->}[u] &\StaL_\fF(\rho_{n+1,m+n})
}
$$

Now consider a  graded ring $R = \oplus R_i$. Suppose $R$ is generated in degrees $n,\ldots m$ over $R_0$. Then the truncation 
$$R' = R_0 \oplus \bigoplus_{i=n+1}^\infty R_i$$
 is generated in degrees $n+1,\ldots, n+m$. Indeed if $g_1\cdots g_k$ is a product of homogeneous terms in degrees between $n$ and $m$, of total degree $>n+m$, and is of minimal degree requiring a term $g_1$ of degree $n$ as a product of terms in the range $n,\ldots, m+n$,  then the other terms $g_2,\ldots, g_k$ are of degree between $n+1$ and $m$, so we may replace $g_1$ by $g_1'=g_1g_k$ and write $g=g_1'g_2\cdots g_{k-1}$ as a product of homogeneous terms in the range $n+1,\ldots,n+m$. 
 
This implies that the truncation of the ring of an object in   $\StaL_\fF(\rho_{n,m+n})$ coming from the open substack $\StaL_\fF(\rho_{n,m})$ is still normally generated. This means that the embedding $\StaL_\fF(\rho_{n,m}) \subset \StaL_\fF(\rho_{n,m+n})$ factors through the stack $\StaL_\fF(\rho_{(n),n+1,m+n})$.
 This induces the required open embedding $\StaL_{\mathfrak F}(\rho_{n,m}) \hookrightarrow \StaL_\fF(\rho_{n+1,m+n}),$ as required. 
  
 Part (3) follows by iterating the construction of part (2).
 \end{proof}

\subsection{Proof of Theorem \ref{Th:PP-algebraic}.} Given an object $f:\cX \to B$ in $\StaL(B)$, there is a maximal open and closed subset $B_0\subset B$ 
so that $f_0:\cX\times_B B_0\to B_0$ is an object in $\StaL_\fF(B_0)$;  
the formation of $B_0$ commutes with arbitrary base change. Thus we can decompose $$\StaL = \bigsqcup_\fF \StaL_\fF,$$ where $\StaL_\fF$ is the open and closed fibered subcategory parametrizing cyclotomic stacks polarized by a line bundle with Hilbert-Euler characteristic $\fF: \ZZ \to \ZZ$. We can thus fix $\fF$ and focus on $\StaL_\fF$.

Proposition \ref{Prop:change-n-m} gives for each pair of integers $n<m$ a fibered subcategory $\StaL_\fF(\rho_{n,m})\subset \StaL_\fF$. This fibered subcategory is open: first note that the condition that the higher cohomologies of $\cL^i$ vanish for $n\leq i \leq m$ is open by cohomology and base change. Second, the condition that the ring be normally generated in these degrees (including vanishing of cohomology  in higher degrees) is open as well. These open embeddings are compatible with the embeddings in Proposition \ref{Prop:change-n-m}. It follows that $\cup \StaL_\fF(\rho_{n,m}) \subset \StaL_\fF$ is an open subcategory. But by Corollary \ref{Cor:embed} every geometric point of $\StaL_\fF$ is in $\cup \StaL_\fF(\rho_{n,m})$, which implies that $\cup \StaL_\fF(\rho_{n,m}) = \StaL_\fF$ as fibered categories. Since $\cup \StaL_\fF(\rho_{n,m})$ is a direct limit of open subcategories which are algebraic stacks, it is also an algebraic stack.
\qed

\section{Moduli of polarized orbispaces} \label{Sec:Orb}
\subsection{Orbispaces}

\begin{definition}
An {\em orbispace} $\cX$  is a separated stack with finite diagonal,
of finite type over a field, equidimensional, geometrically connected and reduced, and 
admitting a dense open $U\subset \cX$ where $U$ is an algebraic space. 
\end{definition}

The assumption that $\cX$ be connected is made mainly for convenience.

Given two flat families of orbispaces $\cX_i \to B_i$ there exists a natural notion of a 1-morphism of families between them, namely a {\em cartesian square} 
$$ \begin{array}{ccc} 
\cX_1 & \to & \cX_2 \\ \dar & & \dar \\ B_1 & \to & B_2.
\end{array}$$
There is a natural notion of 2-morphism, making families of orbispaces into a 2-category.  However by \cite{AV1}, Lemma 4.2.3, such 2-morphisms are unique when they exist. It follows that this 2-category is equivalent to the associated category, where morphisms consist of isomorphism classes of 1-morphisms.
We call this the {\em category of families of orbispaces}. 

As an example, we have that the weighted projective stack $\WP(\rho_1,\ldots,\rho_r)$ is an orbispace if and only if $\gcd(\rho_1,\ldots,\rho_r)=1$.

We can consider cyclotomic orbispaces with uniformizing line bundle.
Adding the conditions in $\StaL$ we can define another category - the subcategory $\OrbL\subset \StaL$ of proper cyclotomic orbispaces with  polarizing line bundles. We have

\begin{proposition}
The subcategory  $\OrbL\subset \StaL$ forms an open substack
\end{proposition}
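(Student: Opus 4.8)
The plan is to show that each condition defining an orbispace fiberwise is an open condition on the base of a family in $\StaL$, so that their conjunction cuts out an open substack; compatibility with base change is automatic because all the conditions are intrinsic to geometric fibers. Since objects of $\StaL$ are already proper cyclotomic stacks with finite diagonal, the properties ``separated, finite diagonal, finite type, proper'' hold throughout, and the remaining orbispace conditions to impose on each geometric fiber $\cX_s$ are: geometrically reduced, geometrically connected, equidimensional, and the existence of a dense open algebraic space.

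First I would dispose of the classical conditions. Geometric reducedness of the fibers of the flat, finitely presented morphism $f:\cX\to B$ is open: choosing a smooth atlas $V\to\cX$ with $V$ a scheme, one has $\cX_s$ geometrically reduced iff $V_s$ is, because $V_s\to\cX_s$ is smooth and surjective, and openness then follows from the scheme-theoretic statement (EGA IV). Geometric connectedness is a topological condition on $|\cX_s|=|X_s|$, so it may be read off the coarse family $\bar f:X\to B$, which is flat, proper and finitely presented by Lemma~\ref{Lem:cms-flat}; openness of the geometrically connected locus for such morphisms is standard. Equidimensionality I would handle using the decomposition $\StaL=\sqcup_\fF\StaL_\fF$: on $\StaL_\fF$ the leading term of $\fF$ forces $\dim\cX_s=d_0$ to be constant, and together with Chevalley's upper semicontinuity of fiber dimension this shows that the purely $d_0$-dimensional locus is open.

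The heart of the matter is the condition that $\cX_s$ carry a dense open algebraic space. I would first observe that the largest such open substack is the free locus $\cX^{\mathrm{fr}}\subset\cX$ where the stabilizer is trivial, so it suffices to control the density of $\cX^{\mathrm{fr}}_s$ in each fiber. Because $\cX$ is cyclotomic with finite diagonal, the inertia $\cI_\cX\to\cX$ is finite, and the fiberwise length of $\cI_\cX$ is upper semicontinuous; hence $\cX^{\mathrm{fr}}$, the locus where this length equals $1$, is open, its complement $Z:=\cX\setminus\cX^{\mathrm{fr}}$ is a closed substack, and the formation of $Z$ commutes with base change since stabilizers are intrinsic to geometric points. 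On a fiber that is reduced and equidimensional of dimension $d_0$, the free locus $\cX^{\mathrm{fr}}_s$ is dense exactly when $Z_s$ contains no component, i.e.\ when $\dim Z_s<d_0$.

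Finally I would deduce openness of this last condition. The morphism $Z\to B$ is proper and finitely presented, so by Chevalley the function $z\mapsto\dim_z Z_{f(z)}$ is upper semicontinuous on $Z$ and, pushing forward along the proper (hence closed) map, the locus $\{s\in B:\dim Z_s\ge d_0\}$ is closed in $B$; its complement $\{s:\dim Z_s<d_0\}$ is the desired open set. Intersecting the four open loci gives an open substack, and since every condition is phrased on geometric fibers the construction is stable under pullback, so $\OrbL\subset\StaL$ is open. The main obstacle I anticipate is the middle step: making precise that the non-free locus $Z$ is genuinely closed with base-change-compatible formation in the tame, possibly non-\'etale cyclotomic setting, and correctly translating ``dense open algebraic space'' into the fiber-dimension inequality $\dim Z_s<d_0$ --- this is where the stacky structure, as opposed to the classical scheme-theoretic openness results, actually enters.
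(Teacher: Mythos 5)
Your overall route coincides with the paper's: the paper likewise checks that each fiberwise orbispace condition is open, citing EGA~IV~12.2.1 for reducedness, cohomology and base change for connectedness, and finiteness of the inertia for the ``dense open with trivial inertia'' condition. Your treatment of that last condition is a correct and useful expansion of the paper's one-line assertion: the non-free locus $Z$ is closed because $\cI_\cX\to\cX$ is finite, density of the free locus in a reduced equidimensional fiber is equivalent to $\dim Z_s<d_0$, and $\{s:\dim Z_s\ge d_0\}$ is closed because you push the Chevalley-closed set $\{z:\dim_z Z_{f(z)}\ge d_0\}$ forward along the \emph{proper}, hence closed, map $Z\to B$. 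That step is sound.

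The gap is in the equidimensionality step, and it is precisely the asymmetry that makes the $Z$-argument work which breaks here. Chevalley's theorem says $\{x\in\cX:\dim_x\cX_{f(x)}\ge d_0\}$ is closed, so the set $U=\{x:\dim_x\cX_{f(x)}<d_0\}$ of points lying only on low-dimensional components is \emph{open} in $\cX$; the non-equidimensional locus in $B$ is $f(U)$, and since $f$ is flat and locally of finite presentation it is an open map, so $f(U)$ is open and the equidimensional locus is \emph{closed} --- the opposite of what you need. (You cannot instead push a closed set forward here, because the bad locus is an existential condition over an open subset of the fiber.) Upper semicontinuity alone therefore does not yield openness; to repair this you must use the hypotheses you have already imposed, namely reducedness of the fibers together with constancy of the Hilbert--Euler characteristic on $\StaL_\fF$: if a nearby fiber acquired a component of dimension $<d_0$, the flat limit of that component in the central fiber would either survive as a low-dimensional component (contradicting equidimensionality of the central fiber) or be absorbed into the top-dimensional part, forcing an embedded or non-reduced structure there to preserve the lower-order terms of $\fF$ (contradicting reducedness). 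For what it is worth, the paper's own proof silently omits equidimensionality altogether, so you are attempting more than the authors do; but the justification you give for that extra step, as written, proves closedness rather than openness and would not survive refereeing.
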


\begin{proof} 
Let $\cX \to B$ be a family of cyclotomic stacks polarized by $\cL$. 
The locus where the fibers are reduced and connected is open in $B$: see \cite[12.2.1]{EGA-IV-3},  for ``no embedded points'' and then cohomology and base change for ``connected''.  
Since the inertia stack is finite over $\cX$, the same holds for the locus where the fibers have a dense open with trivial inertia.
\end{proof}

\subsection{Polarizations} There is an important distinction to be made  between a polarizing line bundle and a polarization - the difference between the data of a line bundle and an element of the Picard group. 

Let $(\cX_i\to B_i, \cL_i)$ be two families with polarizing line bundles  $\cL_i$; then a morphism   comes from a 1-morphism $f_\cX: \cX_1 \to \cX_2$ sitting in a cartesian diagram as above, together with an isomorphism $\alpha: \cL_1 \to f_\cX^* \cL_2$. But a morphism of polarizations should ignore the $\GG_m$-action on the line bundles. The issue is treated extensively in the literature. A procedure for removing  the redundant action, called {\em rigidification}, is treated in \cite{ACV}, \cite{Romagny}, \cite{AGV05}, \cite[Appendix A]{AOV}. This is foreshadowed by the appendix in \cite{Artin}. 

Going back to our families  with polarizing line bundles, the  hypothesis that there is at least one point in each fiber where inertia is trivial implies that $\GG_m$ is a subgroup of the center of $\Aut(\cX, \cL)$ for any object (when the generic stabilizer is nontrivial, $\GG_m$ need not act effectively). This allows us to rigidify the stack $\OrbL$ along the $\GG_m$-action.
  Following the notation of \cite{Romagny, AGV05,AOV} (but different from \cite{ACV}) we thus have an algebraic stack
$$\Orbl = \OrbL\thickslash\GG_m.$$ An object $(\cX \to B,\lambda)$ of $\Orbl(B)$ is a {\em polarized family of cyclotomic stacks} over $B$. These will be described below.

Recall that we have a presentation $\StaL = \cup [H_\alpha/G_\alpha]$, with $H_\alpha$ a subscheme of the  Hilbert scheme of the weighted projective stack $\WP(\rho_\alpha)$. Whenever   $\WP(\rho_\alpha)$ is an orbispace the group $\GG_m$ embeds naturally in the center of the group $G_\alpha$, and we can form the projective group $\PP G_\alpha = G_\alpha/\GG_m$. We can  extract from this a presentation of $\OrbL$ and of $\Orbl$ as follows:

\begin{proposition}\label{Prop:OrbL}
Let $H_\alpha^{orb}\subset H_\alpha$ be the open subscheme parametrizing embedded orbispaces. Then
\begin{enumerate} 
\item $\OrbL = \bigcup_\alpha [H_\alpha^{orb}/G_\alpha]$.
\item $\Orbl = \bigcup_\alpha [H_\alpha^{orb}/\PP G_\alpha]$.
\end{enumerate}
\end{proposition}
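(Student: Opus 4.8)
The plan is to deduce both statements from the presentation $\StaL=\bigcup_\alpha[H_\alpha/G_\alpha]$ established in the proof of Theorem~\ref{Th:PP-algebraic}, treating part (1) as the pullback of an open condition and part (2) as a chart-by-chart computation of the rigidification.

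For part (1), I would start from the fact that $\OrbL\subset\StaL$ is open (the preceding proposition) and pull this open immersion back along each chart $[H_\alpha/G_\alpha]\to\StaL$. Since the property of a geometric fibre being an orbispace — reduced, geometrically connected, equidimensional, and generically an algebraic space — depends only on the isomorphism class of the fibre $\cX_h$ and not on the chosen embedding, the corresponding locus in $H_\alpha$ is $G_\alpha$-invariant and open; this is exactly the subscheme $H_\alpha^{orb}$ of embedded orbispaces. Hence $\OrbL\times_{\StaL}[H_\alpha/G_\alpha]=[H_\alpha^{orb}/G_\alpha]$, and taking the union over $\alpha$ gives $\OrbL=\bigcup_\alpha[H_\alpha^{orb}/G_\alpha]$.

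For part (2), recall that $\Orbl=\OrbL\thickslash\GG_m$, where $\GG_m$ is the subgroup scaling the polarizing line bundle. The crucial identification is that, in the chart $[H_\alpha^{orb}/G_\alpha]$, this scaling $\GG_m$ is precisely the image $\rho_\alpha(\GG_m)\subset G_\alpha$: scaling $\cL$ by $\lambda$ acts on the degree-$i$ piece $f_*\cL^i$ by multiplication by $\lambda^i$, which under the chosen framing is exactly $\rho_\alpha(\lambda)$. I would then observe two things. First, $\rho_\alpha(\GG_m)$ lies in the centre $Z(G_\alpha)$, since $G_\alpha=\Aut_{\rho_\alpha}(V)$ consists of automorphisms commuting with $\rho_\alpha$; moreover, because a nonempty $H_\alpha^{orb}$ forces $\gcd(\rho_\alpha)=1$ (only then does $\WP(\rho_\alpha)$ contain a representable open, hence an embedded orbispace at all), the homomorphism $\rho_\alpha\colon\GG_m\to G_\alpha$ is injective, so $\PP G_\alpha=G_\alpha/\GG_m$ is well defined. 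Second, $\rho_\alpha(\GG_m)$ acts trivially on $H_\alpha^{orb}$: as the very subgroup of $\GG_m$ by which we quotiented to form $\WP(\rho_\alpha)=[(\bbA\setminus0)/\GG_m]$, its action on $\WP(\rho_\alpha)$ is $2$-isomorphic to the identity, so it fixes every embedded substack.

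With these in hand, the computation reduces to the standard fact that rigidifying a quotient stack along a central subgroup that acts trivially on the base unwinds the quotient: $[H_\alpha^{orb}/G_\alpha]\thickslash\GG_m=[H_\alpha^{orb}/\PP G_\alpha]$. Since rigidification is functorial and compatible with the open immersions between charts supplied by Proposition~\ref{Prop:change-n-m}, it commutes with the union, yielding $\Orbl=\bigcup_\alpha[H_\alpha^{orb}/\PP G_\alpha]$. The main obstacle I anticipate is precisely the heart of part (2): making rigorous the identification of the abstract rigidifying $\GG_m$ with the central subgroup $\rho_\alpha(\GG_m)\subset G_\alpha$, verifying that it acts trivially on $H_\alpha^{orb}$, and invoking the chart-local description of the rigidification of a quotient stack; the remainder is bookkeeping about open substacks.
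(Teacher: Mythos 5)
Your argument is correct and is essentially the approach the paper intends: the paper's ``proof'' is a one-line citation to the rigidification constructions of \cite{ACV}, \cite{AGV05}, and \cite[Appendix A]{AOV}, and your writeup simply carries out what those constructions give chart by chart --- part (1) as the $G_\alpha$-invariant open locus pulled back from the openness of $\OrbL\subset\StaL$, and part (2) by identifying the rigidifying $\GG_m$ with the central subgroup $\rho_\alpha(\GG_m)\subset G_\alpha$ acting trivially on $H_\alpha^{orb}$ (where $\gcd(\rho_\alpha)=1$ guarantees injectivity). Your elaboration of the point the paper leaves implicit is accurate, so there is nothing further to add.
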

This follows directly from the construction in any of \cite{ACV}, \cite{AGV05}, \cite[Appendix A]{AOV}.

This construction induces a morphism of stacks
$$\OrbL \to \Orbl,$$
which gives $\OrbL$ the structure of a gerbe  banded 
by $\GG_m$ over $\Orbl$.   

The stack $\Orbl$ can be described as follows: an object $(\cX \to B, \lambda)$ of $\Orbl(B)$ is a family of cyclotomic stacks $\cX \to B$, and $\lambda$ given locally in the \'etale
topology on $B$ by a polarizing line bundle on $\cX$.  Moreover, these
`almost' descend to a polarizing line bundle on $\cX$, i.e., on the overlaps these line bundles differ by a line bundle coming from the base;
this determines a section $\lambda$ of $\Pic(\cX/B)$  called a {\em polarization} of $\cX$.  

The obstruction to existence of a line bundle on $\cX$ is exactly the Brauer class in $H^2_{\text{\'et}}(B, \GG_m)$ of the $\GG_m$-gerbe $B\times_{\Orbl} \OrbL \to B$. 
If the obstruction is trivial, then a polarizing line bundle $\cL$ exists. In this case two pairs $(\cX, \cL)$ and $(\cX, \cL')$ represent the same polarization if and only if there is a line bundle $M$ on $B$ such that $\cL \simeq \cL' \otimes f^*M$. 

\subsection{Comparison of polarizations} 
The construction of rigidification involves stackification of a pre-stack. In our case this is much simpler as we end up sheafifying a pre-sheaf. 
The underlying fact is the following well-known result (cf. \cite[Lemma 1.19]{Viehweg}):
\begin{proposition} Consider a proper family of stacks $f:\cX \to B$  and $\cL, \cL'$ line bundles on $\cX$. Assume the fibers of $\cX \to B$ are reduced and connected. 

There exists a locally closed subscheme $B^0 \subset B$ and a line bundle $M$ on $B_0$, over which there exists an isomorphism $\cL|_{B^0} \to \cL'|_{B^0}\otimes f^* M$, and is universal with respect to this property.
\end{proposition}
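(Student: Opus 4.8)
The plan is to reduce everything to a single line bundle $\cN := \cL \otimes (\cL')^{-1}$, since an isomorphism $\cL|_{B^0}\simeq \cL'|_{B^0}\otimes f^*M$ is the same datum as an isomorphism $\cN|_{B^0}\simeq f^*M$. The proposition then becomes the assertion that the locus where $\cN$ is pulled back from the base, up to a twist by a line bundle on $B$, is represented by a locally closed subscheme carrying a universal such line bundle. Concretely, I would represent the functor $F$ on $B$-schemes with $F(g\colon T\to B)=\{*\}$ when $\cN_T\simeq f_T^*M_T$ for some line bundle $M_T$ on $T$, and $F=\emptyset$ otherwise; here $f_T\colon \cX_T\to T$ is the pulled-back family, still proper with geometrically reduced and connected fibers.

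First I would record the fiberwise criterion for triviality. Because each geometric fiber $\cX_b$ is proper, reduced and connected, $H^0(\cX_b,\cO_{\cX_b})=k(b)$; working on the coarse space and using that $\cX$ is tame, so that $\pi_*$ is exact and cohomology commutes with base change exactly as exploited in Propositions \ref{Prop:criterion} and \ref{Prop:generate}, reduces all cohomological statements to the case of schemes. A line bundle $\cN_b$ is trivial precisely when $h^0(\cX_b,\cN_b)\ge 1$, $h^0(\cX_b,\cN_b^{-1})\ge 1$, and the multiplication map $H^0(\cN_b)\otimes H^0(\cN_b^{-1})\to H^0(\cO_{\cX_b})=k(b)$ is nonzero: given sections $s,t$ with $s\otimes t$ a nonzero constant, the union of their zero loci is empty, so $s$ is nowhere vanishing and defines an isomorphism $\cO_{\cX_b}\to \cN_b$. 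The last condition is essential, since on a reducible fiber both $\cN_b$ and $\cN_b^{-1}$ may have sections without $\cN_b$ being trivial.

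Next I would build $B^0$ in two steps. By semicontinuity the loci $\{h^0(\cN_b)=1\}$ and $\{h^0(\cN_b^{-1})=1\}$ are locally closed; on their scheme-theoretic intersection $Z$, endowed with the structure coming from cohomology and base change, the sheaves $f_*\cN$ and $f_*(\cN^{-1})$ are line bundles whose formation commutes with base change, and $f_*\cO_\cX$ is a line bundle on all of $B$ since $h^0(\cO_{\cX_b})=1$ is constant. The multiplication then becomes a morphism of line bundles $f_*\cN\otimes f_*(\cN^{-1})\to f_*\cO_\cX$ on $Z$, i.e.\ a section of an invertible sheaf, and I define $B^0\subset Z$ to be the open locus where this section is nowhere zero, so that $B^0$ is locally closed in $B$. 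Over $B^0$ the fiberwise criterion shows $\cN$ is fiberwise trivial, whence the evaluation $f^*f_*\cN\to\cN$ is a fiberwise surjection of line bundles, hence an isomorphism; setting $M:=f_*\cN|_{B^0}$ yields the required isomorphism $\cN|_{B^0}\simeq f^*M$.

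The main obstacle is the universal property, which must be verified scheme-theoretically and not merely on points. Given $g\colon T\to B$ with $\cN_T\simeq f_T^*M_T$, every fiber of $\cN_T$ is trivial, so cohomology and base change applies to both $\cN_T$ and $\cN_T^{-1}$ over all of $T$; this forces $g$ to factor through the base-change-compatible subscheme $Z$, and then the multiplication over $T$ is an isomorphism, so $g$ factors through the open subscheme $B^0\subset Z$. Conversely $B^0$ itself carries such an isomorphism by construction, so $B^0$ represents $F$ and is universal. The delicate point throughout is that the locally closed structure on $Z$ must be the one characterized by the base-change property of $f_*$, namely the standard cohomology-and-base-change locus as in \cite[Lemma 1.19]{Viehweg}, since it is precisely this structure that makes the factorization of an arbitrary test morphism automatic.
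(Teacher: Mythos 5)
Your argument is correct, and it is essentially the standard one: the paper itself gives no proof of this proposition, deferring to Viehweg's Lemma 1.19, and what you have written is a faithful reconstruction of the seesaw-type argument behind that citation (reduce to $\cN=\cL\otimes(\cL')^{-1}$, use the fiberwise triviality criterion via $h^0(\cN_b)$, $h^0(\cN_b^{-1})$ and the nonvanishing of the multiplication pairing, then cut out the locus by cohomology and base change). You correctly flag the only genuinely delicate point, namely that $Z$ must carry the scheme structure for which ``$f_*\cN$ and $f_*\cN^{-1}$ are line bundles commuting with arbitrary base change'' is the representing condition; this is exactly what Grothendieck's $Q$-sheaf construction (EGA III 7.7.8--7.7.9, or Mumford's treatment) together with the flattening stratification provides, and since you only invoke rather than reprove it, your proof sits at the same level of detail as the reference the paper relies on. Two minor quibbles that do not affect correctness: the local freeness of $f_*\cO_\cX$ follows not from constancy of $h^0$ per se (Grauert would require $B$ reduced) but from the automatic surjectivity of the base-change map $\phi^0(b)$ forced by the unit section; and the proposition as stated concerns arbitrary proper families of stacks, so the reduction to the coarse space via tameness should be flagged as using the context (cyclotomic, hence tame, stacks) in which the paper actually applies the result.
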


The following is the outcome for polarizations. 

\begin{corollary} Consider a proper family of {\em orbispaces} $\cX \to B$  and $\lambda, \lambda'$ polarizations on $\cX$. 
There exists a locally closed subscheme $B^0 \subset B$, over which there exists an isomorphism $\lambda_{B^0} \to \lambda'_{B^0}$, and is universal with respect to this property.
\end{corollary}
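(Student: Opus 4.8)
The plan is to deduce the Corollary directly from the preceding Proposition by applying it to \'etale-local presentations of the polarizations and then descending the resulting locally closed subscheme and isomorphism. First I would recall the description of a polarization developed just above: a polarization $\lambda$ on $\cX \to B$ is a section of $\Pic(\cX/B)$ represented \'etale-locally on $B$ by honest line bundles $\cL_i$ on $\cX|_{B_i}$, agreeing on overlaps up to twisting by line bundles pulled back from the base. So I would choose an \'etale cover $\{B_i \to B\}$ over which both $\lambda$ and $\lambda'$ are represented by line bundles $\cL_i$ and $\cL'_i$ respectively.

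Next I would apply the Proposition on each $B_i$ to the pair $(\cL_i, \cL'_i)$: this produces a locally closed subscheme $B_i^0 \subset B_i$, a line bundle $M_i$ on $B_i^0$, and a universal isomorphism $\cL_i|_{B_i^0} \to \cL'_i|_{B_i^0} \otimes f^* M_i$, where the fibers being reduced and connected is guaranteed because $\cX \to B$ is a family of orbispaces. The heart of the argument is then checking that these data glue. On an overlap $B_i \times_B B_j$, the representing line bundles $\cL_i$ and $\cL_j$ for the \emph{same} polarization $\lambda$ differ only by a line bundle pulled back from the base, and likewise for $\cL'_i, \cL'_j$; by the universality clause in the Proposition, the subschemes $B_i^0$ and $B_j^0$ must therefore agree over the overlap, since the condition ``$\cL$ and $\cL'$ become isomorphic after a base twist'' is insensitive to twisting either line bundle by a base bundle. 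Hence the $B_i^0$ descend to a single locally closed $B^0 \subset B$.

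The main obstacle I anticipate is the gluing of the isomorphisms rather than of the subschemes: I must produce an isomorphism $\lambda_{B^0} \to \lambda'_{B^0}$ of \emph{polarizations}, i.e.\ of sections of $\Pic(\cX/B^0)$, not an isomorphism of line bundles. This is precisely where passing to polarizations helps, because the ambiguous factors $M_i$ — which obstruct gluing the $\cL_i$-level isomorphisms into a global isomorphism of line bundles — become invisible at the level of $\Pic(\cX/B)$. Concretely, the local isomorphisms identify the classes of $\cL_i|_{B_i^0}$ and $\cL'_i|_{B_i^0}$ in $\Pic(\cX/B^0)(B_i^0)$; since $\lambda$ and $\lambda'$ are the sections these classes represent, and the representing classes for a polarization are well-defined modulo base line bundles, the local identifications automatically agree on overlaps as identifications of sections of $\Pic(\cX/B^0)$. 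Therefore they sheafify to a global isomorphism $\lambda_{B^0} \to \lambda'_{B^0}$. Finally, universality of $B^0$ follows from the universality of each $B_i^0$ together with descent: any test scheme $T \to B$ over which $\lambda_T \simeq \lambda'_T$ can be checked after the \'etale base change to $\{B_i\}$, where the universal property of the Proposition forces $T$ to factor through $B_i^0$, hence through $B^0$.
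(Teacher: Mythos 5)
Your proposal is correct and follows essentially the same route as the paper: the paper likewise picks an \'etale cover $C\to B$ representing both polarizations by line bundles, applies the Proposition there, uses the universality clause (together with the insensitivity of the condition to base twists) to see that the two pullbacks of $C^0$ to $C\times_B C$ coincide so that $C^0$ descends to $B^0$, and observes that the local isomorphism $\cL|_{C^0}\to\cL'|_{C^0}\otimes f_C^*M$ gives an isomorphism of polarizations by definition. Your extra care about gluing the isomorphisms at the level of $\Pic(\cX/B)$ is a welcome elaboration of what the paper dispatches with ``by definition.''
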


\begin{proof}
Fix an \'etale covering $C\to B$ and line bundles $\cL,\cL'$ on $\cX_C$ representing $\lambda_C, \lambda'_C$. By the proposition there is $C^0\subset C$ and a line bundle $M$ on $C^0$, over which there exists an isomorphism $\cL|_{C^0} \to \cL'|_{C^0}\otimes f_C^* M$, and is universal with respect to this property. Since it is universal, the two inverse images of $C^0$ in $C\times_BC$ coincide, therefore $C^0$ descends to $B_0\subset B$, and the data $\cL|_{C^0} \to \cL'|_{C^0}\otimes f_C^* M$ gives an isomorphism $\lambda_{B^0} \to \lambda'_{B^0}$ by definition.
\end{proof}

\begin{corollary}\label{Cor:locally-closed} Consider a proper family of {\em orbispaces} $\cX \to B$, $\cL$ a line bundle,   $\lambda'$ a polarization on $\cX$.

There exists a locally closed subscheme $B^0 \subset B$, over which there exists an isomorphism $\lambda_{B^0} \to \lambda'_{B^0}$, with $\lambda$ the polarization induced by $\cL$, and is universal with respect to this property.
\end{corollary}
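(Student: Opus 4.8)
The plan is to run the same \'etale-descent argument used for the preceding corollary on comparison of two polarizations, the only simplification being that here one of the two objects, namely $\cL$, is already a genuine line bundle on $\cX$ over $B$ itself. Consequently only $\lambda'$ must be trivialized \'etale-locally, and half of the descent bookkeeping disappears. A line bundle $\cL$ determines a section $[\cL]$ of $\Pic(\cX/B)$, which is the polarization $\lambda$ ``induced by $\cL$''; the task is to locate, universally, the locus of $B$ where this section agrees with $\lambda'$.

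First I would fix an \'etale covering $C \to B$ and a line bundle $\cL'$ on $\cX_C$ representing $\lambda'_C$, exactly as in the proof of the preceding corollary. I would then apply the preceding proposition on comparison of two line bundles to the pair $(\cL|_C, \cL')$ on the family $\cX_C \to C$, obtaining a locally closed $C^0 \subset C$, a line bundle $M$ on $C^0$, and an isomorphism $\cL|_{C^0} \to \cL'|_{C^0} \otimes f_C^* M$, universal with respect to this property. By that universality the two preimages of $C^0$ in $C\times_B C$ coincide, so $C^0$ descends to a locally closed $B^0 \subset B$. On $B^0$ the isomorphism of line bundles, now taken modulo twists pulled back from the base, descends to an isomorphism of $\Pic(\cX/B)$-sections $\lambda_{B^0} \to \lambda'_{B^0}$, and the universal property is inherited from that of $C^0$.

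The one point requiring care --- and the closest thing to an obstacle --- is purely formal: one must check that the comparison isomorphism descends as an isomorphism of \emph{polarizations}, i.e. of sections of $\Pic(\cX/B)$, independently of the auxiliary choices of $C$ and of the representative $\cL'$, and that it therefore absorbs the base twist $M$ harmlessly. This is exactly the content of the $\GG_m$-rigidification: an isomorphism of polarizations is insensitive to tensoring by a line bundle pulled back from $B$, which is what lets $M$ drop out. As in the comparison proposition, the reducedness and geometric connectedness of the orbispace fibers are what guarantee $(f_C)_* \cO_{\cX_C} = \cO_C$ and hence pin down $M$ and the universal locus; with that in hand the statement is a genuine corollary of the preceding two results.
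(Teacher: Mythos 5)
Your argument is correct and is essentially the paper's: the paper simply observes that the statement is immediate from the preceding corollary applied to the pair $(\lambda,\lambda')$ with $\lambda$ the polarization induced by $\cL$, and what you have written is just that corollary's \'etale-descent proof unwound in this special case. The minor simplification you note (only $\lambda'$ needs a local representative) is harmless but buys nothing new.
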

\begin{proof}
This is immediate from the previous corollary.
\end{proof}

\subsection{Moduli of canonically polarized orbispaces} 
\begin{lemma} \label{lemm:cmgor}
Let $\cX \to B$ be a proper family of orbispaces.
There exists open subschemes $B_{\gor} \subset B_{\cm}\subset B$ where the fibers of $\cX \to B$ are Gorenstein and Cohen-Macaulay respectively.
\end{lemma}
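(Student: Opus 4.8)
The plan is to reduce the statement to the analogous openness theorems for schemes and then to use properness of $f$ to transport the resulting open loci from $\cX$ down to $B$. First I would fix a smooth surjective atlas $u:U\to\cX$ with $U$ a scheme; since $\cX\to B$ is flat and $u$ is smooth, the composite $g:=f\circ u:U\to B$ is flat and of finite presentation. Recall that for a stack the properties ``Cohen--Macaulay'' and ``Gorenstein'' of a fiber are defined through such an atlas, and are well defined precisely because they are smooth-local: for each geometric point $s\in B$ the base change $U_s\to\cX_s$ is smooth and surjective, so $\cX_s$ is Cohen--Macaulay (resp.\ Gorenstein) at a point $x$ if and only if $U_s$ is Cohen--Macaulay (resp.\ Gorenstein) at any preimage of $x$, both properties being preserved and reflected by smooth morphisms of schemes.

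Next I would invoke the openness results for schemes. By \cite[12.1.6]{EGA-IV-3} the locus $U_{\cm}\subset U$ of points at which the fiber of the flat, finitely-presented morphism $g$ is Cohen--Macaulay is open, and similarly the Gorenstein locus $U_{\gor}\subset U$ is open. Writing $\cX_{\cm}\subset\cX$ (resp.\ $\cX_{\gor}$) for the subset of points at which the ambient fiber is Cohen--Macaulay (resp.\ Gorenstein), the smooth-locality recorded above gives $U_{\cm}=u^{-1}(\cX_{\cm})$ and $U_{\gor}=u^{-1}(\cX_{\gor})$; since $u$ is smooth and surjective, openness of these preimages shows that $\cX_{\cm}$ and $\cX_{\gor}$ are open substacks of $\cX$.

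Finally I would use properness. Set $\cZ_{\cm}=\cX\setminus\cX_{\cm}$ and $\cZ_{\gor}=\cX\setminus\cX_{\gor}$, closed substacks of $\cX$. Because $f$ is proper, the images $f(\cZ_{\cm})$ and $f(\cZ_{\gor})$ are closed in $B$, so
$$B_{\cm}=B\setminus f(\cZ_{\cm}),\qquad B_{\gor}=B\setminus f(\cZ_{\gor})$$
are open. For a geometric point $s$ one has $s\in B_{\cm}$ if and only if $\cZ_{\cm}\cap\cX_s=\emptyset$, i.e.\ every point of $\cX_s$ lies in $\cX_{\cm}$, which is exactly the condition that $\cX_s$ be Cohen--Macaulay; the same reasoning gives the defining property of $B_{\gor}$. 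The inclusion $B_{\gor}\subset B_{\cm}$ follows since a Gorenstein fiber is in particular Cohen--Macaulay, whence $\cX_{\gor}\subset\cX_{\cm}$ and therefore $f(\cZ_{\cm})\subset f(\cZ_{\gor})$.

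The step I expect to require the most care is the second one: verifying that the EGA openness statements apply verbatim after passing to the atlas, and in particular that the non-Gorenstein locus is genuinely closed. For this it may be cleaner to argue directly on $\cX$ via the relative dualizing complex $\omega^\bullet_{\cX/B}$, characterizing the Cohen--Macaulay locus as the locus where it reduces to a single sheaf and the Gorenstein locus as the locus where that sheaf is in addition invertible; this also dovetails with the later discussion of the initial homology of the relative dualizing complex.
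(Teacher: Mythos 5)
Your proposal is correct and reaches the same conclusion, but it is organized differently from the paper's argument, and the difference is worth noting. The paper quotes the fiberwise version of the EGA openness result directly: for a flat, finite-type, proper family with pure-dimensional fibers, the set of $b\in B$ where the fiber is $S_r$ is already open \emph{in $B$}, so taking $r=\dim(X/B)$ gives $B_{\cm}$ with no separate pushdown step; you instead use the pointwise version on an atlas (open locus in $U$, hence an open substack of $\cX$) and then invoke properness to close up the image of the complement. Both are fine, and your route has the advantage of isolating exactly where properness enters. For the Gorenstein locus, however, the paper does not argue ``similarly'': it restricts to the Cohen--Macaulay locus, observes that there the shifted relative dualizing complex $\omega_{\cX/B}^\bullet[-n]$ is a sheaf, and characterizes the Gorenstein locus as the open set where that sheaf is invertible, then spends most of the proof verifying that the dualizing complex descends along smooth (or \'etale) covers so that this makes sense on the stack $\cX$. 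Your ``similarly the Gorenstein locus $U_{\gor}\subset U$ is open'' is true, but it is not covered by the $S_r$ statement you cite and in fact is usually \emph{proved} by the dualizing-sheaf argument; so the alternative you sketch in your final paragraph is not merely cleaner --- it is essentially the missing justification for that step, and it is what the paper actually does. I would fold that remark into the body of the proof rather than leaving the Gorenstein openness on the atlas as an unreferenced ``similarly.''
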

\begin{proof}
Recall the following fact \cite[12.2.1]{EGA-IV-3}: 
Suppose $X\to B$ is a flat morphism of finite type with pure dimensional
fibers and $F$ is a coherent $\cO_X$-module flat over $B$.
Then for each $r\ge 0$ the locus
$$\{b \in B : F|_{X_b} \text{ is } S_r \}$$
is open in $B$.  Take $F=\cO_X$ and $r=\dim(X/B)$ to see that the locus
where the fibers are Cohen-Macaulay is open.

When the fibers of $X\to B$ are Cohen-Macaulay, the shifted
relative dualizing complex $\omega_{X/B}^\bullet[-n]$ is a sheaf $\omega_{X/B}$ 
that is invertible
precisely on the open subset where
the fibers are Gorenstein
(see \cite{Conrad}, Theorem 3.5.1, Corollary 3.5.2, and the subsequent discussion).  

It remains to check this analysis applies to the stack $\cX$.  In characteristic 0, note that the dualizing
sheaf is insensitive to \'etale localization \cite[Th. 4.4.4 and p. 214]{Conrad}, and thus
descends canonically to $\cX$.  In arbitrary characteristic one has to use smooth covers instead.  However,
it is a fundamental property of the dualizing complex that it behaves
well under smooth morphisms, i.e., if $g:Y\to Z$ is smooth and $Z\to B$ is flat then (up to shifts)
$$\omega_{Y/B}^{\bullet} = g^*\omega_{Z/B}^{\bullet} \otimes \omega_{Y/Z}.$$
Indeed, these formulas are fundamental tools in showing that the dualizing complex is well-defined
(see \cite[pp. 29-30, Th. 2.8.1 and 3.5.1]{Conrad}).  In particular, we can define $\omega_{\cX/B}^{\bullet}$
via smooth covers.  

It the particular case at hand with $\cX = [\cP/\GG_m]$, the smooth cover is given by $\cP$ and the descent datum is given by the $\GG_m$-equivariant structure on $\omega_\cP$.
(Duality for Artin stacks is developed in \cite{Nironi}.
However we will only need the existence of a unique $\omega_\cX$.)
\end{proof}

The lemma implies:

\begin{proposition}
There are open substacks $\Orbl_\gor\subset \Orbl_\cm \subset \Orbl$ parametrizing Gorenstein and Cohen-Macaulay polarized orbispaces.
\end{proposition}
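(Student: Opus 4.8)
The plan is to show that $\Orbl_\cm$ and $\Orbl_\gor$, defined as the full subcategories of $\Orbl$ consisting of objects $(\cX \to B, \lambda)$ whose geometric fibers are Cohen--Macaulay, respectively Gorenstein, are open substacks. Since $\Orbl$ is already known to be an algebraic stack, it suffices to verify the standard criterion: for every object $\cX \to B$ of $\Orbl$, the locus in $B$ over which the defining condition holds is open, and its formation commutes with arbitrary base change. The first half is precisely the content of Lemma \ref{lemm:cmgor}, which furnishes the open subschemes $B_\gor \subset B_\cm \subset B$.

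The remaining point is compatibility with base change. First I would observe that both defining conditions are insensitive to the polarization $\lambda$, depending only on the underlying family $\cX \to B$ of orbispaces; hence the subcategories are saturated with respect to the forgetful projection and the question reduces to a statement about families of stacks. Next I would note that both conditions are fiberwise: the Cohen--Macaulay locus arises from the fiberwise $S_r$-criterion \cite[12.2.1]{EGA-IV-3}, and the Gorenstein locus is exactly the locus where the relative dualizing sheaf $\omega_{\cX/B}$---constructed on the stack via smooth covers in the proof of the lemma---is invertible, which is again a condition on geometric fibers. For a morphism $B' \to B$, the fiber of $\cX \times_B B' \to B'$ over a geometric point agrees with the fiber of $\cX \to B$ over its image, so $(B')_\cm = B' \times_B B_\cm$ and likewise for the Gorenstein locus. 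This establishes that the inclusions are representable by open immersions, hence define open substacks, and the nesting $\Orbl_\gor \subset \Orbl_\cm$ is immediate from $B_\gor \subset B_\cm$.

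The step I expect to require the most care is confirming that the dualizing-sheaf formalism descends correctly to the stack $\cX$ and that its locus of invertibility is genuinely intrinsic and fiberwise---but this is already carried out in the proof of Lemma \ref{lemm:cmgor}, which verifies via smooth covers that $\omega_{\cX/B}$ exists and behaves well, so here it only needs to be cited. An equivalent route would be to work through the explicit presentation $\Orbl = \bigcup_\alpha [H_\alpha^{orb}/\PP G_\alpha]$, cut out the Cohen--Macaulay and Gorenstein loci inside each $H_\alpha^{orb}$, and check invariance under $\PP G_\alpha$ together with compatibility with the open embeddings relating different $\alpha$; since these loci descend from $\GG_m$-invariant conditions on $\OrbL$ along the gerbe $\OrbL \to \Orbl$, the two approaches agree.
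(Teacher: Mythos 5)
Your proof is correct and follows the paper's own route: the paper derives this proposition immediately from Lemma \ref{lemm:cmgor} (its entire proof is ``The lemma implies:''), and your argument is simply a careful spelling-out of why that lemma suffices, namely that the Cohen--Macaulay and Gorenstein conditions are fiberwise, independent of the polarization, and compatible with base change. The extra detail you supply, including the alternative check via the presentation $\Orbl = \bigcup_\alpha [H_\alpha^{orb}/\PP G_\alpha]$, is consistent with the paper and fills in exactly what the paper leaves implicit.
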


For most applications to compactifications of moduli spaces these substacks should be sufficient (see Remark~\ref{rema:KoKo}). 
However, in some situations more general singularities might be needed:

\begin{definition} 
A family  $\cX \to B$ of orbispaces is {\em canonically polarized} if 
\begin{itemize}
\item
the fibers are pure-dimensional, satisfy Serre's condition $S_2$, and are Gorenstein in codimension one;  
\item
$\omega_{\cX/B}^\circ$, namely the component of the dualizing complex
$\omega_{\cX/B}^\bullet$ in degree $\dim B-\dim X$, is locally free and polarizing.
\end{itemize}
The {\em canonical polarization} on $\cX\to B$ is the polarization induced by $\omega_{\cX/B}^\circ$.  
\end{definition}

\begin{theorem}\label{Th:OrbOm}
There is a locally closed substack $\Orbo\subset\Orbl$ parametrizing canonically polarized orbispaces. 
\end{theorem}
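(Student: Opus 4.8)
The plan is to realize $\Orbo$ as a locally closed substack of $\Orbl$ by separately imposing the two conditions in the definition of \emph{canonically polarized} and checking that each is a locally closed (or locally closed-type) condition on the base. By Lemma~\ref{lemm:cmgor} we already have the open substack $\Orbl_\gor \subset \Orbl_\cm$ where the fibers are Gorenstein; but the canonical polarization condition is weaker, requiring only $S_2$, pure-dimensionality, and Gorenstein in codimension one, so I must work more carefully than simply restricting to $\Orbl_\gor$.

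First I would isolate the open substack $\Orbl^\circ \subset \Orbl$ where the fibers are pure-dimensional, satisfy $S_2$, and are Gorenstein in codimension one. Pure-dimensionality and $S_2$ are open by the cited result \cite[12.2.1]{EGA-IV-3} applied (via smooth covers, as in the proof of Lemma~\ref{lemm:cmgor}) to $F = \cO_\cX$ with $r=2$. The Gorenstein-in-codimension-one condition is the statement that $\omega_{\cX/B}^\circ$ is invertible in codimension one on each fiber; as in Lemma~\ref{lemm:cmgor} the locus where the dualizing sheaf fails to be invertible is closed, and requiring its fiberwise codimension to be at least two is again an open condition on $B$ by semicontinuity of fiber dimension. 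On $\Orbl^\circ$ the sheaf $\omega^\circ_{\cX/B}$ is a well-defined coherent sheaf, constructed on smooth covers and descended to $\cX$ exactly as in Lemma~\ref{lemm:cmgor}, and its formation commutes with base change because we have imposed $S_2$ and codimension-one Gorenstein-ness (so $\omega^\circ$ is the reflexive hull of an invertible sheaf on the Gorenstein locus, and this is compatible with base change on the fibers).

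Next, over $\Orbl^\circ$ I would impose that $\omega^\circ_{\cX/B}$ is \emph{locally free}. The locus in the base where a coherent sheaf restricts to a locally free sheaf on the fibers is open (again cohomology-and-base-change, or the standard fact that being locally free is open for a sheaf flat over the base); this carves out a further open substack. Finally, on this open substack I must impose that the now-locally-free $\omega^\circ_{\cX/B}$ is \emph{polarizing}. Here I would invoke Corollary~\ref{Cor:locally-closed}: taking $\cL = \omega^\circ_{\cX/B}$ and $\lambda'$ the polarization recorded by the object of $\Orbl$, that corollary produces a universal locally closed subscheme $B^0 \subset B$ over which the polarization induced by $\omega^\circ$ agrees with $\lambda'$. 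Patching these locally closed loci together functorially, using the universality, gives the locally closed substack $\Orbo \subset \Orbl$.

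The main obstacle I anticipate is the compatibility of $\omega^\circ_{\cX/B}$ with base change under the weakened hypotheses. For a family of Gorenstein fibers the relative dualizing sheaf is invertible and its formation trivially commutes with base change, but here fibers are only $S_2$ and Gorenstein in codimension one, so $\omega^\circ$ is a reflexive (not invertible) sheaf and I must argue that its fiberwise formation is stable under arbitrary base change. The key is that $S_2$ plus codimension-one Gorenstein-ness lets one recover $\omega^\circ$ as the pushforward of an invertible sheaf from the (fiberwise large) Gorenstein open locus, and this pushforward commutes with base change precisely because the complement has codimension at least two in each fiber. Establishing this cleanly over the stack $\cX=[\cP/\GG_m]$ via the $\GG_m$-equivariant smooth cover $\cP$, and ensuring that \emph{locally free} and \emph{polarizing} are then honest openness and locally-closedness statements on $B$, is the crux of the argument; once it is in place, the three conditions stack up to give the desired locally closed substack.
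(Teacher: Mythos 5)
Your overall architecture (open conditions on the fibers, then Corollary~\ref{Cor:locally-closed} to make ``polarizing'' a locally closed condition) matches the paper's proof. But there is a genuine gap at exactly the step you flag as the crux: your justification for why the formation of $\omega_{\cX/B}^{\circ}$ commutes with base change is wrong. You argue that $\omega^{\circ}$ is the pushforward (or reflexive hull) of an invertible sheaf from the fiberwise-dense Gorenstein locus, and that ``this pushforward commutes with base change precisely because the complement has codimension at least two in each fiber.'' That implication is false, and its failure is the entire reason this paper exists: pushforward from an open set with fiberwise codimension $\ge 2$ complement (equivalently, saturation) does \emph{not} in general commute with base change --- this is exactly the phenomenon exhibited by Pinkham's example of the cone over the rational normal quartic cited in the introduction, and it is why Koll\'ar families impose base-change compatibility of every $F^{[n]}$ as a genuine \emph{hypothesis} rather than deducing it from $S_2$ plus codimension-one invertibility. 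So you cannot establish base-change compatibility of $\omega^{\circ}_{\cX/B}$ on all of your $\Orbl^{\circ}$ before imposing local freeness.

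The paper's argument avoids this by reversing the order and using a saturation comparison only where it is safe: restrict first to the open locus where the relative sheaf $\omega_{\cX/B}^{\circ}$ is invertible. There the change-of-rings map $\omega_{\cX/B}^{\circ}|_{\cX_b} \to \omega_{\cX_b}^{\circ}$ is injective (an invertible sheaf on a reduced fiber mapping isomorphically over the dense Gorenstein locus has no kernel), its cokernel is supported in codimension two, and since $\omega_{\cX_b}^{\circ}$ is saturated ($S_2$), any injection from an invertible sheaf that is an isomorphism in codimension one is an isomorphism. Hence base-change compatibility holds \emph{on the invertible locus}, which is all that is needed both to see that ``$\omega_{\cX/B}^{\circ}$ locally free'' is a fiberwise (hence base-change-stable) open condition and to feed the resulting line bundle into Corollary~\ref{Cor:locally-closed}. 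You should replace your pushforward argument with this comparison-map argument; the rest of your outline then goes through.
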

\begin{proof}
Lemma~\ref{lemm:cmgor} guarantees the conditions on the fibers are open.  
The change-of-rings spectral sequence gives a homomorphism of sheaves 
$$\omega_{\cX/B}^{\circ}|_{\cX_b} \rightarrow \omega_{\cX_b}^{\circ}.$$
This is injective because the relative dualizing sheaf is invertible and $\cX_b$ is reduced; 
its cokernel is supported on a subset of $\cX_b$ of codimension two.  
The dualizing sheaf $\omega_{\cX_b}^\circ$ is automatically saturated 
(cf. \cite[1.6]{ReYPG}), so any injection from an invertible sheaf 
that is an isomorphism in codimension one is an isomorphism.  
Thus on the open locus where $\omega_{\cX/B}^{\circ}$ is invertible,
its formation commutes with arbitrary base change.
Applying Corollary \ref{Cor:locally-closed}, we obtain the theorem.
\end{proof}

We denote the intersection $\Orbo_\gor :=  \Orbo\cap \Orbl_\gor$, the substack of canonically polarized Gorenstein orbispaces. 

Note that over $\Orbo$, the universal polarization is represented by an invertible sheaf,
namely, the relative dualizing sheaf  of the universal family.  This gives us a lifting
$$\xymatrix{
    & \OrbL\ar[d] \\ 
 \Orbo\ar@{.>}[ur] \ar[r] & \Orbl.
}$$

Indeed we can describe $\Orbo$ or $\Orbo_\gor$ directly in terms of line bundles as follows: an object of $\Orbo$ is a triple $(\cX, \cL,\phi)$ where $\cX$ is an orbispace on which $\omega_{\cX/B}^\circ$ is invertible, $\cL$ a line bundle, and $\phi: \cL \to \omega_{\cX/B}^\circ$ an isomorphism. Arrows are fiber diagram as for $\Orbo$. But note that an arrow does not involve the choice of an arrow on $\omega_{\cX/B}^\circ$: such an  arrow is canonically given as the unique arrow on $\omega_{\cX/B}^\circ$ respecting the trace map $\bR^n\pi_*\omega_{\cX/B}^\circ\to \cO_B$.
The relative dualizing sheaf (and complex) does have $\GG_m$ acting as automorphisms, but
this automorphism group acts effectively on the trace map. 

\section{Koll\'ar families and stacks}  \label{Sec:Kollar}

\subsection{Reflexive sheaves, saturation and base change}

Let $X$ be a reduced scheme of finite type over a field, having pure dimension $d$, satisfying Serre's condition $S_2$. 
\begin{lemma}
\begin{enumerate}
\item
Let $F$ be a coherent sheaf  on $X$. 
Then the sheaf $F^* := \Hom(F , \cO_X)$ is $S_2$.
\item Suppose $\psi:F \to G$ is a morphism of $S_2$-sheaves which is an isomorphism on the complement of a closed subscheme of  codimension $\geq 2$. Then $\psi$ is an isomorphism.
\item Let $F$ be an $S_2$ coherent sheaf on $X$. Then the morphism 
$F \to F^{**}$ is an isomorphism.
\end{enumerate}
\end{lemma}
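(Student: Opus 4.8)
The plan is to handle the three parts in order, using throughout the elementary depth inequalities attached to a short exact sequence $0\to A\to B\to C\to 0$ of coherent sheaves, namely $\operatorname{depth} A_x\geq\min(\operatorname{depth} B_x,\operatorname{depth} C_x+1)$ and $\operatorname{depth} C_x\geq\min(\operatorname{depth} B_x,\operatorname{depth} A_x-1)$ at each point $x$, together with the observation that the hypotheses on $X$ make $\cO_X$ an $S_2$ (hence torsion-free) sheaf whose associated points are the generic points of $X$. For (1) I would argue locally: choose a presentation $\cO_X^{\oplus a}\to\cO_X^{\oplus b}\to F\to 0$ and apply $\cHom(-,\cO_X)$ to get $0\to F^*\to\cO_X^{\oplus b}\xrightarrow{\delta}\cO_X^{\oplus a}$, so $F^*=\ker\delta$. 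Writing $I=\operatorname{im}\delta\subseteq\cO_X^{\oplus a}$, the sheaf $I$ is torsion-free as a subsheaf of $\cO_X^{\oplus a}$, hence $\operatorname{depth} I_x\geq 1$ whenever $\dim\cO_{X,x}\geq 1$. Feeding $0\to F^*\to\cO_X^{\oplus b}\to I\to 0$ into the depth inequality gives $\operatorname{depth} F^*_x\geq\min(\operatorname{depth}(\cO_{X,x}^{\oplus b}),\operatorname{depth} I_x+1)$, which is $\geq 2$ at points of codimension $\geq 2$ and $\geq 1$ at codimension-one points; thus $F^*$ is $S_2$.

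For (2), let $K=\ker\psi$ and $C=\operatorname{coker}\psi$, both supported on the codimension-$\geq 2$ locus where $\psi$ is not an isomorphism. Since $K\subseteq F$ and $F$ is $S_2$, every associated point of $K$ is a generic point of $X$; being supported in codimension $\geq 2$ this forces $K=0$, so $\psi$ is injective. For surjectivity, suppose $C\neq 0$ and pick a generic point $x$ of $\operatorname{Supp} C$, necessarily of codimension $\geq 2$, so $\operatorname{depth} F_x,\operatorname{depth} G_x\geq 2$. Then $C_x$ has finite length and therefore depth $0$, contradicting $\operatorname{depth} C_x\geq\min(\operatorname{depth} G_x,\operatorname{depth} F_x-1)\geq 1$. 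Hence $C=0$ and $\psi$ is an isomorphism.

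For (3), by (1) the sheaf $F^{**}$ is again $S_2$, so the canonical map $\sigma\colon F\to F^{**}$ is a morphism of $S_2$ sheaves, and by (2) it suffices to show $\sigma$ is an isomorphism away from codimension $\geq 2$. Injectivity is immediate since $F$ is torsion-free, and at a generic point $\eta$ the local ring $\cO_{X,\eta}$ is a field (here reducedness is used), so $\sigma_\eta$ is ordinary biduality of a finite-dimensional vector space. I expect the main obstacle to be the codimension-one case: there $\cO_{X,x}$ is a one-dimensional reduced Cohen–Macaulay local ring and $F_x$ is a maximal Cohen–Macaulay module over it, but $\sigma_x$ need not be surjective for such data in general. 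I therefore expect this step to require input on the codimension-one structure of $X$ — concretely, that $\cO_{X,x}$ be Gorenstein, since maximal Cohen–Macaulay modules over a Gorenstein local ring are reflexive and then $\sigma_x$ is an isomorphism. Granting the codimension-one isomorphism, part (2) upgrades $\sigma$ to a global isomorphism and finishes the proof.
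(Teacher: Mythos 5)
Your parts (1) and (2) are correct but follow a genuinely different route from the paper. The paper argues via the section-extension characterization of $S_2$: for (1) it takes a section $\phi$ of $\Hom(F,\cO_X)$ over an open $U$ with $\codim(X\setminus U)\geq 2$ and extends each regular function $\phi(f|_U)$ across $X\setminus U$ using $S_2$ of $\cO_X$; for (2) it pulls sections of $G$ back through $\psi^{-1}$ over such a $U$ and extends them. Your depth-chasing through $0\to F^{*}\to\cO_X^{\oplus b}\to I\to 0$ and $0\to F\to G\to C\to 0$ proves the same statements; it is more self-contained and makes explicit exactly which depth bound is used where, whereas the paper's version is closer in spirit to how the lemma is applied later (saturation as $j_*$ of a restriction). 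Either is fine.

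For (3) you have put your finger on a genuine issue rather than created a gap of your own. The paper's entire proof of (3) is the sentence ``Since $F$ and $F^{**}$ are $S_2$, the homomorphism $F\to F^{**}$ is an isomorphism,'' i.e.\ an appeal to (2); but (2) only applies once one knows $\sigma\colon F\to F^{**}$ is an isomorphism away from codimension $2$. At generic points this is biduality of vector spaces over the residue fields (reducedness), but at a codimension-one point $\cO_{X,x}$ is only a reduced one-dimensional Cohen--Macaulay local ring, and by Bass a one-dimensional Cohen--Macaulay local ring has the property that every torsion-free module is reflexive if and only if it is Gorenstein. For instance, over the three coordinate axes $R=k[[x,y,z]]/(xy,yz,zx)$ the $S_2$ module $M=R/(z)$ has $M^{**}\cong k[[x]]\oplus k[[y]]$ and $M\to M^{**}$ has one-dimensional cokernel, so the statement fails without further hypotheses in codimension one. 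In every application in the paper this extra input is available: the fibers of a Koll\'ar family are Gorenstein in codimension one, and indeed $F$ is invertible in codimension one by conditions (1.b.i) and (1.b.iii) of the definition, which makes $\sigma$ trivially an isomorphism there so that (2) finishes the proof. Your proposal, which isolates precisely this codimension-one reflexivity as the missing step and names the Gorenstein hypothesis that repairs it, is more careful than the paper's own one-line argument; the only adjustment I would suggest is that assuming $F$ invertible in codimension one (as holds in the applications) works just as well as assuming $\cO_{X,x}$ Gorenstein there.
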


\begin{proof}
 The problems being local, we assume $X$ is affine. 
\begin{enumerate}
\item Let $\phi$ be a section of $ \Hom(F,\cO_X)$ over an open set $U$ with $\codim(X\setminus U) \geq 2$. Let $f$ be a section of $F$ on $X$ and consider   $\phi(f_U)\in H^0(U, \cO_U)$. Since $X$ is $S_2$ this extends uniquely to a regular function $g$ on $X$. We define $\tilde\phi(f) = g$.  
\item Let $g$ be a section of $\cG$. Let $U$ be an open subset with codimension $\geq 2$ complement on which $\psi$ is an isomorphism. Set $f_U = \psi^{-1}(g_U)$, then $f_U$ is uniquely the restriction of a section $f$ of $F$, and $f = \psi^{-1}g$.
\item Since $F$ and $F^{**}$ are $S_2$, the homomorphism $F \to F^{**}$ is an isomorphism.
\end{enumerate}
\end{proof}

\begin{definition} A coherent sheaf $F$ on  $X$ is said to be {\em reflexive} if the morphism $F  \to F^{**}$ is an isomorphism.
\end{definition}

\begin{definition}
Let $F$ be a coherent sheaf on $X$, and $n$ a {\em positive} integer. 
We define 
$$F^{[-n]} = \Hom(F^{\otimes n},\cO_X).$$
and
$$F^{[n]} = \Hom(F^{[-n]},\cO_X).$$
\end{definition}

Theorem 10 of \cite{kollarjalg} implies 
\begin{proposition}\label{Prop:flatsheaf}
Let $X\to B$ be a flat morphism of finite type with $S_2$ fibers of pure dimension $d$. Let $U\subset  X$ be  an open subscheme, dense in each fiber. Let $F$ be a coherent sheaf on $X$, locally free on $U$. If
the formation of $F^{[n]}$ commutes with arbitrary base extension
then $F^{[n]}$ is flat over $B$.  
\end{proposition}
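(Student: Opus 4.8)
The plan is to reduce the statement, which is local on $B$, to a setting where Kollár's flatness criterion (Theorem 10 of \cite{kollarjalg}) applies, and then to identify $F^{[n]}$ with the fibrewise $S_2$-hull whose base-change behaviour that criterion governs. First I would localize: flatness of the coherent sheaf $F^{[n]}$ over $B$ may be tested stalkwise, so I would assume $B = \Spec A$ with $A$ Noetherian local and check flatness at a point $x$ of $X$ lying over the closed point. This disturbs none of the hypotheses, since flatness, pure-dimensionality, the condition $S_2$, and ``locally free on a dense open of each fibre'' are all preserved under passage to an open of $B$.

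Next I would unwind the hypothesis. Writing $F^{[-n]} = \Hom(F^{\otimes n}, \cO_X)$ and $F^{[n]} = \Hom(F^{[-n]}, \cO_X)$ exhibits $F^{[n]}$ as reflexive, and on the dense open $U$ where $F$ is locally free it agrees with $F^{\otimes n}$. The assumption that the formation of $F^{[n]}$ commutes with arbitrary base extension says, in particular, that for each geometric point $b$ of $B$ the natural map $(F^{[n]})|_{X_b} \to (F_b)^{[n]}$ is an isomorphism. Because $X_b$ is pure of dimension $d$ and satisfies $S_2$, the sheaf $(F_b)^{[n]}$ is the reflexive hull of $F_b^{\otimes n}$ and is itself $S_2$; thus every fibre of $F^{[n]}$ is exactly the $S_2$-hull demanded in Kollár's setup, and $F^{[n]}$ agrees with the generically locally free sheaf $F^{\otimes n}$ over $U$.

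Finally I would invoke Theorem 10 of \cite{kollarjalg}. Its hypotheses --- a flat morphism with pure $S_2$ fibres, a sheaf that is locally free on a fibrewise dense open, and whose fibrewise hulls are compatible with base change --- are precisely what the previous step furnishes, and its conclusion is the flatness over $B$ of the resulting family of hulls, namely of $F^{[n]}$.

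I expect the main obstacle to be the precise matching of hypotheses rather than any computation: one must confirm that ``formation of $F^{[n]}$ commutes with every base change'' is genuinely the input Kollár's criterion requires, which amounts to checking that the fibres of $F^{[n]}$ carry the correct depth (the condition $S_2$) and that ``dense open on which $F$ is locally free'' supplies the generic-local-freeness running through the criterion. The reductions and the identification of $F^{[n]}$ as a hull are otherwise routine.
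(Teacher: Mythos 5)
Your proposal is correct and coincides with the paper's treatment: the paper's entire proof is the single citation ``Theorem 10 of \cite{kollarjalg} implies'' the proposition, i.e.\ exactly the reduction you describe, identifying $F^{[n]}$ on each fiber with the $S_2$-hull of $F^{\otimes n}$ (which is $S_2$ since it is a dual, by the lemma on reflexive sheaves in Section 5.1) and invoking Koll\'ar's flatness criterion for hulls commuting with base change. The extra detail you supply on matching hypotheses is accurate and only makes explicit what the paper leaves to the reader.
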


\subsection{Koll\'ar families of $\QQ$-line bundles}  
\begin{definition} \label{Def:Kollar}
\begin{enumerate}
\item By {\em a Koll\'ar family of $\QQ$-line bundles} we mean
\begin{enumerate}
 \item $\bar f:X \to B$ a flat family  of equidimensional connected reduced schemes satisfying Serre's condition $S_2$,
\item  $F$ a coherent sheaf  on $X$, such that 
  \begin{enumerate}
    \item for each fiber $X_b$, the restriction $F|_{X_b}$ is reflexive of rank 1;
    \item  for every $n$, the formation of $F^{[n]}$ commutes with arbitrary base change;
    \item  for each geometric point $s$ of $B$ there is an integer $N_s\neq 0$ such that  $F^{[N_s]}|_{X_s}$ is invertible.
  \end{enumerate}   
\end{enumerate}
\item A {\em morphism} from a  Koll\'ar family $(X\to B, F)$ to another $(X_1\to B_1, F_1)$ consists of  a cartesian diagram
$$\xymatrix{
X \ar[d]\ar^{\bar \phi}[r] & X_1 \ar[d]\\
B\ar[r] & B_1 
}$$ along with an isomorphism $\bar\alpha:F \to \bar\phi^* F_1$.
\item Parts (1) and (2) above define the objects and arrows of a category  of {\em Koll\'ar families of  $\QQ$-line bundles}, fibered over the category $\cS ch_k$ of $k$-schemes.  It has an important open subcategory $\KolL$ of {\em Koll\'ar families of polarizing $\QQ$-line bundles},
where $X \to B$ is {\em proper} and $F^{[N_s]}|_{X_s}$ is {\em ample.}
\end{enumerate}
\end{definition}

Note that (1.b.i) and (1.b.ii) imply that $F = F^{[1]}$.
Furthermore, (1.b.i) and (1.b.iii) imply that $F$ is invertible in codimension 1. 
A reflexive rank-one sheaf on a reduced one-dimensional scheme $X$ is a fractional ideal $J$.
Indeed, express $\cF^*$ locally as a quotient $\cO_X^{\oplus r} \to F \to 0$,  which realizes 
$\cF$ as a subsheaf of $\cO_X^{\oplus r}$.  Choose a projection $\cO_X^{\oplus r} \to \cO_X$ such that
the composed homomorphism $\cF \to \cO_X$ has rank one at each generic point.  This
is injective since our scheme is reduced.
The saturated powers $F^{[n]}$ equal $J^n$, i.e., the powers of $J$ as an ideal.  
In particular, $J^N$ is locally principal.
The blow-up of $X$ along $J$ is
$$\Proj_X\oplus_{n\ge 0} J^n \simeq \Proj_X \oplus_{n\ge 0} J^{Nn} = X,$$
so $J$ is locally principal and $F$ is invertible.

One can show that the category of {\em proper} Koll\'ar families 
is an algebraic stack.
We will show below that the subcategory $\KolL$ is an algebraic stack by
identifying it as an algebraic substack of $\OrbL$.
Koll\'ar families were introduced in the canonical case $F = \omega_{X/B}$
in \cite[5.2]{KollarJDG} and \cite[2.11]{HK}.  
In \cite{Hacking} it was shown that they admit, at least in the 
canonical case, a good deformation-obstruction theory. 
As we see below, this holds in complete generality. Indeed Hacking's approach is via the associated stacks, as is ours. In \cite{KollarHH} Koll\'ar provides an approach without stacks.

We note that the index $N_s$ is bounded in suitable open sets:

\begin{lemma}\label{Lemma:bound}
Let $(X \to B,F) $ be a Koll\'ar  family of $\QQ$-line bundles, and $s\in B$ a geometric point. Let $N_s$ be an integer such that  $F^{[N_s]}|_{X_s}$ is invertible. Then there is an open neighborhood $U$ of $s$ such that $F^{[N_s]}|_{X_U}$ is invertible.
\end{lemma}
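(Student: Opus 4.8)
The plan is to deduce the statement from a single property of the coherent sheaf $G:=F^{[N_s]}$ on $X$, namely that it is invertible on $X_U$ for a suitable $U$; since $F^{[N_s]}|_{X_U}=G|_{X_U}$, this is exactly what is wanted. First I would check that $G$ is flat over $B$. The hypotheses guarantee that $F$ is locally free on an open subscheme of $X$ dense in every fibre (each $F|_{X_b}$ is reflexive of rank one, hence invertible in codimension one), and the formation of $F^{[N_s]}$ commutes with arbitrary base change by assumption (1.b.ii); Proposition~\ref{Prop:flatsheaf} then shows that $G$ is flat over $B$. The same base-change assumption identifies $G|_{X_b}$ with $(F|_{X_b})^{[N_s]}$, so the hypothesis that $F^{[N_s]}|_{X_s}$ be invertible says precisely that the fibre $G|_{X_s}$ is invertible.

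The core of the argument is the local criterion for freeness. Let $\bar s\in B$ be the scheme point underlying the geometric point $s$; since local freeness descends along the faithfully flat map from the geometric fibre, $G|_{X_{\bar s}}$ is invertible. For $x\in X_{\bar s}$ the stalk $G_x$ is a finitely generated $\cO_{X,x}$-module, flat over $\cO_{B,\bar s}$ (because $G$ is $B$-flat), whose reduction modulo the maximal ideal of $\cO_{B,\bar s}$ is free of rank one over the fibre ring. The standard criterion---a finitely generated module over a local ring which is flat over a local subring and becomes free modulo the maximal ideal of that subring is itself free---then shows that $G_x$ is free of rank one. Hence $G$ is invertible at every point of $X_{\bar s}$, that is, $X_{\bar s}$ is contained in the open locus $V\subset X$ on which $G$ is invertible.

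The remaining, and genuinely delicate, step is to pass from invertibility near the fibre to invertibility over an open set of the base. Openness of $V$ takes place on the total space $X$, and a priori the image in $B$ of the closed set $X\setminus V$ need not be closed, so the inclusion $X_{\bar s}\subset V$ does not by itself furnish an open $U\ni\bar s$ with $X_U\subset V$. The natural way to close this gap is to invoke properness of $\bar f$ on the families under consideration: then $\bar f(X\setminus V)$ is closed and avoids $\bar s$, so $U:=B\setminus\bar f(X\setminus V)$ is an open neighbourhood whose preimage $X_U$ lies in $V$, giving invertibility of $F^{[N_s]}|_{X_U}$; pulling back under $s\mapsto\bar s$ yields the required neighbourhood of $s$. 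This globalisation step is where I expect the main difficulty to lie, since without a properness (or closedness) input one must instead show directly that the locus where a reflexive rank-one sheaf fails to be invertible has closed image in $B$.
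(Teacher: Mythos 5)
Your argument tracks the paper's own proof in its substantive content. The paper likewise deduces flatness of $F^{[N_s]}$ from the base-change hypothesis via Proposition~\ref{Prop:flatsheaf}, and then passes from invertibility of $F^{[N_s]}|_{X_s}$ to invertibility of $F^{[N_s]}$ on an open subset of $X$ containing the fibre $X_s$; your appeal to faithfully flat descent from the geometric fibre and to the local criterion for freeness is exactly the justification the paper leaves implicit in that step. So the first two thirds of your proposal are a correct and more detailed rendering of the paper's argument.

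The divergence is in the final globalisation, and you have put your finger on precisely the step the paper treats most lightly. You close the gap by invoking properness of $\bar f$, so that the image of the closed non-invertibility locus is closed in $B$. But properness is not among the hypotheses: Definition~\ref{Def:Kollar} imposes it only on the subcategory $\KolL$, and Lemma~\ref{Lemma:bound} is invoked in Proposition~\ref{Prop:KQ-to-PP} for arbitrary Koll\'ar families. As written, your proof therefore establishes the statement only for proper families. The paper's own justification of this step is the single phrase ``since $X\to B$ is of finite type the assertion follows,'' which by itself yields only constructibility of the image of the bad locus (Chevalley), not closedness; one would still need to exclude a component of the non-invertibility locus lying over a generisation of $s$ without specialising into $X_s$. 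Two mitigating observations: the weaker conclusion that both you and the paper do fully establish --- invertibility of $F^{[N_s]}$ on an open neighbourhood of $X_s$ in $X$ --- already suffices for the use made of the lemma in Proposition~\ref{Prop:KQ-to-PP}, where the cyclotomic condition is checked fibre by fibre; and in the moduli-theoretic setting of $\KolL$, where properness is part of the data, your argument is complete.
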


\begin{proof} Since  $F^{[N]}$ are flat and their formation commutes with base change, the assumption that $F^{[N_s]}|_{X_s}$ is invertible
implies $F^{[N_s]}$ is invertible in a neighborhood of $X_s$.
Since $X\to B$ is of finite type the assertion follows. \end{proof}

\subsection{Koll\'ar families and uniformized twisted varieties}
\begin{definition} Let $(\bar f:X \to B \ , \ F )$ be a Koll\'ar family of $\QQ$-line bundles. 
\begin{enumerate} 
 \item The {\em $\GG_m$-space} of $F$ is the $X$-scheme 
 $$ \cP_F\ \  =\ \  \Spec_X \left(\oplus_{j\in \ZZ} F^{[j]}\right).$$ 
 Note that $\cP_F \to B$ is flat by Proposition \ref{Prop:flatsheaf}.
 \item The {\em associated stack} is $$\cX_F\ \  =\ \   \left[\,\cP_F \ \big/ \ \GG_m\,\right].$$ This comes with the   {\em natural line bundle} $\cL = \cL_F$ associated to the principal bundle $$\cP_F\ \to\ \cX_F.$$
\end{enumerate}
\end{definition}
We drop the index  $F$ and write $\cX$ and $\cP$ when no confusion is likely. We denote by $\pi:\cX \to X$ the resulting morphism.

\begin{proposition}\label{Prop:KQ-to-PP}
\begin{enumerate}
\item The family $\cX \to B$ is a family of cyclotomic orbispaces uniformized by $\cL$, with fibers satisfying Serre's condition $S_2$. 
\item The morphism $\pi:\cX\to X$ makes $X$ into the coarse moduli space of $\cX$. This morphism  is an isomorphism on the open subset where $F$ is invertible, the complement of which has codimension $>1$ in each fiber. 
\item For any integer $a$, we have $\pi_*(\cL^a)=F^{[a]}$ (in particular $\pi_*(\cL^a)$ is reflexive). 
\item This construction is functorial, that is, given a morphism of Koll\'ar families $(\bar\phi,\bar \alpha)$ from $(X \to B, F)$ to  $(X_1 \to B_1, F_1)$ we have canonically $\cP_F \simeq \bar\phi^*\cP_{F_1}$ and $\cX_F \simeq \bar\phi^*\cX_{F_1}$.
\end{enumerate}
\end{proposition}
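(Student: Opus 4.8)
The plan is to reduce all four assertions to a single local computation, treating parts (3) and (4) as essentially formal consequences of the graded structure of $\cA := \oplus_{j\in\ZZ} F^{[j]}$. First note that $\cA$ is a graded sheaf of $\cO_X$-algebras, flat over $B$ by Proposition \ref{Prop:flatsheaf}, and that by hypothesis (1.b.ii) of Definition \ref{Def:Kollar} each $F^{[j]}$ — hence $\cA$, together with $\cP_F = \Spec_X \cA$ and $\cX_F = [\cP_F/\GG_m]$ — commutes with arbitrary base change. Part (4) is then immediate: an arrow of Koll\'ar families supplies an isomorphism $\bar\alpha: F \to \bar\phi^* F_1$, which, being a map of reflexive sheaves, induces compatible isomorphisms $F^{[j]} \to (\bar\phi^* F_1)^{[j]} = \bar\phi^*(F_1^{[j]})$ for all $j$; assembling these gives a graded isomorphism $\cA \simeq \bar\phi^* \cA_1$, hence $\cP_F \simeq \bar\phi^* \cP_{F_1}$ $\GG_m$-equivariantly and $\cX_F \simeq \bar\phi^* \cX_{F_1}$. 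Part (3) is the identification of $\pi_*\cL^a$ with the weight-$a$ graded piece of $\cA$: for the quotient $\cX_F = [\Spec_X\cA/\GG_m]$ and the tautological bundle $\cL$ one has $\pi_*\cL^a = \cA_a = F^{[a]}$, which is reflexive (it is a sheaf-$\cHom$ into $\cO_X$; cf. the opening lemma of this section).

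The heart of the matter is a local model for $\cX_F$. Working near a geometric fibre over $s \in B$, choose $N = N_s$ with $F^{[N]}$ invertible there (Lemma \ref{Lemma:bound}), and let $u$ be a local generator — a homogeneous unit of degree $N$ in $\cA$. Setting $u = 1$ produces the finite $\cO_X$-algebra $\bar\cA := \bigl(\oplus_{i=0}^{N-1} F^{[i]}\bigr)/(u-1)$, carrying the residual $\ZZ/N$-grading, i.e. a $\bmu_N$-action, and I claim $\cX_F \simeq [Y/\bmu_N]$ locally, with $Y := \Spec_X\bar\cA$. This follows by splitting the $\GG_m$-action: the subgroup $\bmu_N \subset \GG_m$ acts only through the $\ZZ/N$-grading (trivially on $u$, whose degree is $N\equiv 0$), while the residual $\GG_m/\bmu_N$ turns the $u$-direction into a $\GG_m$-torsor over $X$, so $[\cP_F/\GG_m] = [[\cP_F/\bmu_N]/(\GG_m/\bmu_N)] = [Y/\bmu_N]$. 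From this presentation the bulk of (1) reads off at once: the stabilizers are subgroups $\bmu_r$ $(r\mid N)$ of $\bmu_N$, so $\cX_F$ is cyclotomic (hence tame); its coarse space is $\Spec \bar\cA^{\bmu_N} = \Spec \bar\cA_{[0]} = X$, with formation commuting with base change by Lemma \ref{Lem:cms-flat}; and since each $F^{[i]}$ is reflexive, hence $S_2$, the algebra $\bar\cA$ and therefore $Y$ is $S_2$, which descends to the statement that the fibres of $\cX_F \to B$ are $S_2$.

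Uniformization is now almost free: by construction $\cP_\cL = \cP_F$ is a scheme, hence a representable $\GG_m$-bundle, which is precisely the first of the equivalent conditions characterizing a uniformizing line bundle on a cyclotomic stack; so $\cL$ uniformizes $\cX_F$. For part (2), over the open $U \subset X$ where $F$ is invertible one has $\cA|_U = \oplus_j F^{\otimes j}$, the genuine $\GG_m$-torsor algebra of the line bundle $F|_U$, whence $\cX_F|_U \simeq U$ and $\pi$ is an isomorphism there; since $F|_{X_b}$ is reflexive of rank one on the $S_2$ scheme $X_b$ it is invertible in codimension one, so the complement $X_b \setminus U$ has codimension $>1$ in each fibre. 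Finally, reducedness and connectedness of the fibres (needed for the orbispace property) follow because $\pi$ is an isomorphism over the dense open $U$, $X$ has reduced connected fibres by (1.a), and $Y_b$ — being $S_2$ and generically a $\bmu_N$-torsor over the reduced $U_b$ — is $R_0 + S_1$, hence reduced.

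I expect the main obstacle to be the fibrewise verifications in the local model: that restricting to a fibre $X_b$ commutes with forming $\bar\cA$ and with the $\bmu_N$-quotient, and that the resulting $Y_b$ is reduced and $S_2$. Both points rely essentially on the tameness of the cyclotomic stack — so that $\bmu_N$ is \'etale over the relevant base and $\bmu_N$-torsors over reduced schemes stay reduced — together with hypothesis (1.b.ii), which guarantees that $F^{[i]}|_{X_b}$ is the reflexive power $(F|_{X_b})^{[i]}$ and hence $S_2$ on the fibre. Checking that the $\GG_m/\bmu_N$ presentation is compatible with the codimension-two degeneracy locus $X_b\setminus U$ is where the argument will require the most care.
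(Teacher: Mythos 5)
Your overall strategy is essentially the paper's: build everything from the graded algebra $\oplus_{j\in\ZZ}F^{[j]}$, bound the stabilizers by $\bmu_N$ using the local invertibility of $F^{[N]}$ from Lemma~\ref{Lemma:bound}, read off the coarse space as the degree-zero invariants, deduce $S_2$ of the fibers from reflexivity of the graded pieces (via base change, hypothesis (1.b.ii)), and treat (3) and (4) as formal consequences of the graded structure. The one genuine difference is how the stabilizer bound is organized: you pass to the local slice $Y=\{u=1\}$ and the presentation $[Y/\bmu_N]$, whereas the paper stays global, proving $(F^{[N]})^a\simeq F^{[Na]}$ (isomorphism in codimension one between reflexive sheaves, hence everywhere) and mapping $\cP_F$ equivariantly to $\Spec_X\bigl(\oplus_{a}F^{[aN]}\bigr)$, on which $\GG_m$ acts with stabilizer exactly $\bmu_N$. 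Your local model is more explicit and makes the coarse-space and $S_2$ claims very concrete; the paper's version avoids choosing trivializations and sidesteps the compatibility checks you correctly flag at the end (note that your identification $\bar\cA=\oplus_{i=0}^{N-1}F^{[i]}$ silently uses the same periodicity isomorphism $F^{[i+N]}\simeq F^{[i]}\otimes F^{[N]}$ that the paper proves via the codimension-one argument, so you should record that step).

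One justification as stated is wrong, though the conclusion you need is true. You assert that $F|_{X_b}$, being reflexive of rank one on the $S_2$ scheme $X_b$, is invertible in codimension one. That implication fails on non-normal schemes: the ideal of the node on a nodal curve is reflexive of rank one but not invertible. The correct argument uses hypothesis (1.b.iii): localizing at a codimension-one point of $X_b$ one gets a reflexive rank-one sheaf on a reduced one-dimensional scheme, i.e.\ a fractional ideal $J$ with $J^N$ locally principal, and the blow-up $\Proj_X\oplus_{n\ge0}J^n\simeq\Proj_X\oplus_{n\ge0}J^{Nn}=X$ forces $J$ itself to be locally principal. This is exactly the discussion following Definition~\ref{Def:Kollar}, so the repair is a citation, but as written your proof of the codimension-$>1$ claim in part (2) does not go through.
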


\begin{proof} To verify that $\cX \to B$ is cyclotomic and uniformized
by $\cL$, we just need to check that $\GG_m$ acts on $\cP_F$ with finite stabilizers.  Lemma \ref{Lemma:bound} allows us to assume
that $F^{[N]}$ is locally free for some $N>0$.  We have natural
homomorphisms 
$$m_a:\ (F^{[N]})^a\ \  \lrar\ \  F^{[Na]}$$
for each $a\in \ZZ$.   For each $b\in B$, the sheaf $F$ is locally free on an open set
$U_b\hookrightarrow X_b$ with codimension-two complement, 
so $m_a$ is an isomorphism over $U_b$, and hence over all $X_b$.  
It follows that $(F^{[N]})^a=F^{[Na]}$.  

Consider the $\GG_m$-equivariant morphism
$$\cP_F\ \  \lrar \ \  \Spec_X \left(\oplus_{a\in \ZZ} F^{[aN]}\right).$$
Note that $\GG_m$ acts on the target with stabilizer $\bmu_N$,
which implies the stabilizers on $\cP_F$ are subgroups of $\bmu_N$.   

To complete part (1), we note that a fiber in $\cP_F$ over $b\in B$ is $S_2$ as the spectrum of an algebra with reflexive components over an $S_2$ base. Also $\cP_F \to \cX$ is smooth and surjective. Smoothness implies
that  the quotient stack $\cX$ is $S_2$
as well.

Since the coarse moduli space is obtained by taking invariants (see the proof of Lemma \ref{Lem:cms-flat}), and since the invariant part of $\oplus F^{[a]}$ is  $F^{[0]}=\cO_X$, we have that $X$ is the coarse moduli space of $\cX$. Over the locus $U$ where $F$ is invertible, the scheme $\cP_F$ is a principal $\GG_m$-bundle, so $$\cX_U\  = \ \left[\,(\cP_F)|_U\ /\ \GG_m\,\right]\  =\  U.$$

Part (3) follows since $\cL^a$ is the degree-$a$ component of the graded $\cO_\cX$-algebra $\oplus \cL^i$, whose spectrum is $\cP_F$. The degree-$a$ component of the graded $\cO_X$ algebra $\oplus_{i\in \ZZ} F^{[i]}$ is $F^{[a]}$. 

Part (4) follows from the functoriality of $\Spec$ and of the formation of quotient stack.

\end{proof}

\begin{remark} If we assume for each $a\in \ZZ$ and $b\in B$ that  $F^{[a]}$ is $S_r$ (in particular the fibers of $X$ are $S_r$), it follows that the fibers of  $\cX$ are $S_r$ as well.
\end{remark}

\begin{definition} \label{Def:twisted-variety}
A family of  \emph{uniformized twisted varieties} $(\cX \to B, \cL)$ is a  flat
family of $S_2$ cyclotomic orbispaces uniformized by $\cL$, 
such that the morphism $\pi:\cX \to X$ to the coarse moduli space is an 
isomorphism away from a subset of codimension $>1$ in each fiber.  

We define 1-morphisms of families of uniformized twisted varieties as fibered diagrams. Of course, there is a notion of 2-morphism making these into a 2-category, but since these are orbispaces, a 2-isomorphism is unique when it exists \cite[Lemma 4.2.3]{AV2}, so this 2-category is equivalent to the associated category, whose morphisms are isomorphism classes of 1-morphisms.
\end{definition}

\begin{remark}
 We insist on $\pi:\cX \to X$ an isomorphism in codimension-1 to obtain an equivalence with Koll\'ar families as in Theorem \ref{Th:KQisPP} below. To describe uniformized orbispaces in terms of sheaves on $X$ in general, it is necessary to specify an algebra of sheaves, which can be quite delicate especially when $\cX\to X$ is branched along singular codimension-1 loci. For a subtle example see \cite[Definition 2.5]{Jarvis} as well as \cite[\S 4.2-4.4]{AJ}, where an equivalence analogous to our Theorem \ref{Th:KQisPP} below is given.
\end{remark}

\begin{theorem} \label{Th:KQisPP}
The category of Koll\'ar families of $\QQ$-line bundles is equivalent to the category of uniformized twisted varieties via the base preserving functors
$$ (X\to B, F) \ \ \ \mapsto \ \ \ (\cX_F  \to B, \cL_F), $$
with $\cX_F =  [\cP_F \ / \ \GG_m]$, 
and its inverse
$$ (\cX \to  B, \cL) \ \ \  \mapsto \ \ \ (X \to B, \pi_*\cL),$$
where $X$ is the coarse moduli space of $\cX$.
\end{theorem}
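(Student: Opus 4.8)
The plan is to treat the forward functor $\Phi:(X\to B,F)\mapsto(\cX_F\to B,\cL_F)$ as already built: Proposition~\ref{Prop:KQ-to-PP} shows that $\Phi$ produces an $S_2$ cyclotomic orbispace uniformized by $\cL_F$ with $\pi:\cX_F\to X$ an isomorphism in codimension one on each fiber, i.e. a uniformized twisted variety in the sense of Definition~\ref{Def:twisted-variety}, and part~(4) of that proposition records functoriality. It therefore remains to construct the inverse $\Psi:(\cX\to B,\cL)\mapsto(X\to B,\pi_*\cL)$, to show it lands in Koll\'ar families of $\QQ$-line bundles, and to produce natural isomorphisms $\Psi\circ\Phi\cong\mathrm{id}$ and $\Phi\circ\Psi\cong\mathrm{id}$. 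The linchpin of all of this is the identity
$$\pi_*(\cL^n)\ \simeq\ (\pi_*\cL)^{[n]}\qquad\text{for all }n\in\ZZ,$$
relating pushforwards of powers of the uniformizing bundle to saturated powers of $F:=\pi_*\cL$.

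First I would verify that $\Psi$ has image in Koll\'ar families. Given a uniformized twisted variety $(\cX\to B,\cL)$, Lemma~\ref{Lem:cms-flat} shows that $\bar f:X\to B$ is flat and that its geometric fibers are the coarse spaces of the fibers of $f$; since $\cX$ is an orbispace these fibers are equidimensional, reduced and connected, and they are $S_2$ because $\cX$ is $S_2$, $\pi$ is an isomorphism in codimension one, and $\pi_*\cO_\cX=\cO_X$ with $\pi_*$ exact by tameness. Setting $F=\pi_*\cL$, base change (again Lemma~\ref{Lem:cms-flat}, via tameness) gives $F|_{X_b}=(\pi_b)_*(\cL|_{\cX_b})$, which is the pushforward of a line bundle and hence reflexive of rank one; the same exactness yields that the formation of $F^{[n]}=\pi_*(\cL^n)$ commutes with arbitrary base change. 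Finally, if $N$ is the index of $\cX$ then Lemma~\ref{Lem:bundle-descends} gives $\cL^N\simeq\pi^*M$ for an invertible $M$ on $X$, so by the projection formula $F^{[N]}=\pi_*(\cL^N)=M$ is invertible. This checks every clause of Definition~\ref{Def:Kollar}.

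Next I would establish the key identity and deduce that the functors are quasi-inverse. On the dense open $U\subset X$ where $\pi$ is an isomorphism, $F|_U=\cL|_U$ is invertible, so $(\pi_*\cL^n)|_U=(\cL|_U)^{\otimes n}=(F^{[n]})|_U$; both $\pi_*(\cL^n)$ and $F^{[n]}$ are fibrewise $S_2$ and agree off a subset of codimension $\ge 2$, so the uniqueness of $S_2$-extensions for reflexive sheaves established above forces $\pi_*(\cL^n)\simeq F^{[n]}$. Consequently $\Psi\circ\Phi\cong\mathrm{id}$ is immediate: the coarse space of $\cX_F$ is $X$ and $\pi_*\cL_F=F^{[1]}=F$ by Proposition~\ref{Prop:KQ-to-PP}(2),(3). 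For $\Phi\circ\Psi$, write $\cP_\cL=\Spec_\cX(\oplus_n\cL^n)$ for the principal bundle of Definition~\ref{Def:principal}; this morphism to $\cX$ is affine, so pushing its structure sheaf forward to $X$ along $\pi$ and using $\pi_*(\oplus_n\cL^n)=\oplus_n\pi_*(\cL^n)=\oplus_n F^{[n]}$ identifies $\cP_\cL$ with $\cP_F=\Spec_X(\oplus_n F^{[n]})$ as $\GG_m$-spaces over $X$. Passing to quotients gives $\cX\simeq[\cP_F/\GG_m]=\cX_F$ and carries $\cL$ to $\cL_F$. Naturality of all these isomorphisms follows from functoriality of relative $\Spec$, of coarse moduli, and of $\pi_*$, together with Proposition~\ref{Prop:KQ-to-PP}(4).

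The step I expect to be the main obstacle is this last reconstruction: identifying the principal bundle $\cP_\cL$ living over the stack $\cX$ with the $\GG_m$-space $\cP_F$ assembled from the saturated algebra $\oplus_n F^{[n]}$ on the coarse space $X$, precisely over the codimension $\ge 2$ locus where $\pi$ fails to be an isomorphism. The delicacy is that one must commute the relative-$\Spec$ construction past the non-representable coarse-moduli morphism; this is exactly where tameness is indispensable, since it guarantees that $\pi_*$ is exact, commutes with the grading and with base change, and sends the affine $\cX$-algebra $\oplus_n\cL^n$ to the affine $X$-algebra $\oplus_n F^{[n]}$ with no higher-cohomological corrections. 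The $S_2$ hypothesis then upgrades the codimension-one agreement to a genuine isomorphism of graded algebras, closing the loop.
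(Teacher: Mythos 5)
Your proposal is correct and follows essentially the same route as the paper: the forward functor is taken from Proposition~\ref{Prop:KQ-to-PP}, and the inverse is checked to land in Koll\'ar families using tameness and the direct-summand structure of $\oplus_j\pi_*\cL^j$ inside the $S_2$ algebra of $\cP_\cL$, with the identity $\pi_*(\cL^n)\simeq(\pi_*\cL)^{[n]}$ coming from codimension-one agreement plus uniqueness of $S_2$-extensions (what the paper calls ``saturated''). You are somewhat more explicit than the paper in verifying that the two functors are quasi-inverse, but the underlying argument is the same.
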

\begin{proof}
Proposition \ref{Prop:KQ-to-PP} gives the functor from the category of Koll\'ar families of $\QQ$-line bundles to the category of uniformized twisted 
varieties.  We now give an inverse.  Fix a uniformized
twisted variety $(\cX \to B,\cL)$, with coarse moduli space
$\pi:\cX \to X$.  Since the formation of $\pi$ commutes with arbitrary base change (Lemma \ref{Lem:cms-flat}), the universal property of coarse moduli spaces guarantees that each fiber of $X\to B$ is reduced.  If $N$ is the index of $\cX$ then, by Lemma \ref{Lem:bundle-descends}, $\cL^N$ descends to an invertible sheaf on $X$, which coincides with $\pi_*\cL^N$.  

Each geometric point of $\cX$ admits an \'etale neighborhood which
is isomorphic to a quotient of an affine $S_2$-scheme $V$ by the action of $\bmu_r$.  Locally, the coarse moduli space is the scheme-theoretic 
quotient $V/\bmu_r$.  However, since $\bmu_r$ is linearly reductive, any quotient of an $S_2$-scheme by $\bmu_r$ is also $S_2$, because the invariants are a direct summand in the coordinate ring of $V$. Similarly the sheaves $\pi_*\cL^j$ are direct summands in the algebra of $\cP$, which is affine over $X$, flat over $B$ with $S_2$ fibers. It follows that these sheaves are flat over $B$, saturated, and their formation commutes with base change on $B$.
\end{proof}

\begin{corollary}
The category $\KolL$ is an algebraic stack, isomorphic to the open substack of $\OrbL$ where $(\cX\to B, \cL)$  are uniformized twisted varieties. 
\end{corollary}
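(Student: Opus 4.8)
The plan is to deduce the corollary from the equivalence of Theorem~\ref{Th:KQisPP} together with the algebraicity of $\OrbL$. The equivalence of Theorem~\ref{Th:KQisPP} is between the full categories of Koll\'ar families of $\QQ$-line bundles and of uniformized twisted varieties; the first step is to check that it carries the open subcategory $\KolL$ onto the subcategory of $\OrbL$ whose objects are uniformized twisted varieties. Under $(X\to B,F)\mapsto(\cX_F\to B,\cL_F)$ the morphism $\pi:\cX\to X$ is proper and quasi-finite, so $\cX\to B$ is proper exactly when $\bar f:X\to B$ is. Moreover, if $N$ is the index of $\cX$ then Lemma~\ref{Lem:bundle-descends} and Proposition~\ref{Prop:KQ-to-PP}(3) give $\cL^N\simeq\pi^*F^{[N]}$ with $F^{[N]}=\pi_*\cL^N$; since ampleness of a line bundle is detected after such a finite pullback and is a fiberwise condition, $F^{[N_s]}|_{X_s}$ is ample for all geometric $s$ if and only if $\cL$ is a polarizing line bundle. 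Hence the objects of $\KolL$ correspond precisely to the objects of $\OrbL$ that are uniformized twisted varieties in the sense of Definition~\ref{Def:twisted-variety}.

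It remains to prove that this subcategory is open in $\OrbL$. Recall that $\OrbL$ is an open substack of $\StaL$, which is algebraic by Theorem~\ref{Th:PP-algebraic}; thus any open substack of $\OrbL$ is again an algebraic stack, and it suffices to show that for any object $(\cX\to B,\cL)$ of $\OrbL$ the locus $B^0\subset B$ over which the fibers are $S_2$ and $\pi$ is an isomorphism away from codimension $>1$ is open, with formation commuting with base change. The $S_2$ condition on the fibers of $\cX\to B$ defines an open locus by \cite[12.2.1]{EGA-IV-3}, exactly as in Lemma~\ref{lemm:cmgor}, and this locus is stable under base change. For the codimension condition, let $Z\subset\cX$ be the closed substack over which the inertia $\cI_\cX\to\cX$ is nontrivial; it is closed because $\cI_\cX\to\cX$ is finite. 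The requirement that $\pi$ be an isomorphism in codimension one on a fiber $\cX_b$ is exactly that the fiber $Z_b$ of $Z\to B$ over $b$ satisfy $\dim Z_b\le \dim(\cX/B)-2$, and the set of $b$ for which this holds is open by upper semicontinuity of the fiber dimension of $Z\to B$, using that $\cX\to B$ is flat with equidimensional fibers. Being a fiberwise statement, this locus too commutes with base change.

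Combining the two steps, $\KolL$ is equivalent to the open substack of $\OrbL$ cut out by the $S_2$ and codimension-one conditions, namely the substack of uniformized twisted varieties, and is therefore an algebraic stack.

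The step I expect to be the main obstacle is the openness in the second paragraph, and within it the codimension condition: the $S_2$-openness is a direct citation, but one must identify the non-representable locus as a genuine closed substack $Z$ and control its fiber dimension in families, so that the passage from ``isomorphism in codimension one'' to an open condition on $B$ is rigorous. The matching of the ample and polarizing conditions in the first paragraph is routine given Proposition~\ref{Prop:KQ-to-PP}(3) and Lemma~\ref{Lem:bundle-descends}.
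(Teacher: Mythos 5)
Your proposal is correct and follows essentially the same route as the paper: Theorem~\ref{Th:KQisPP} identifies $\KolL$ with the subcategory of $\OrbL$ consisting of uniformized twisted varieties, the $S_2$ condition is open by \cite[12.2.1]{EGA-IV-3}, and the codimension-one condition is open by semicontinuity of fiber dimension applied to the locus of nontrivial inertia. Your write-up merely makes explicit two points the paper leaves implicit (the matching of the ampleness/polarizing conditions under the equivalence, and the identification of the nontrivial-inertia locus as a closed substack), so there is nothing substantive to add.
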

\begin{proof} 
 Using Theorem \ref{Th:KQisPP}  note that  $\KolL$ indeed parametrizes  orbispaces  with polarizing line bundles which are at the same time  uniformized twisted varieties. This is open in $\OrbL$ since the conditions of being $S_2$ is open (\cite[12.2.1]{EGA-IV-3}) and the condition on the fibers of inertia having support in codimension $>1$ is open  by semicontinuity of fiber dimensions in proper morphisms.
\end{proof}

\begin{definition} 
We define $\Koll = \KolL\thickslash \GG_m$, where $\GG_m$ acts by scalars on $\cL$. We define $\Kolo \subset \Koll$ as the locally closed subcategory corresponding to
canonically polarized twisted varieties $(f:\cX\to B, \lambda)$  where $\lambda$ is given by  $\omega_{\cX/B}^\circ$.

We define   $\Kolo_\gor \subset \Kolo$
to be the open substack where the fibers of $f$ are Cohen-Macaulay.
\end{definition}
Note that Corollary ~\ref{Cor:locally-closed}  guarantees that
the second condition is locally closed. 

\begin{remark}\label{Rem:CM} We can interpret $\Kolo$ in terms of Koll\'ar families $(\bar f:X\to B, F)$ with isomorphism $F\to\omega_{\cX/B}^\circ$. The substack $\Kolo_\gor$ is characterized by 
\begin{enumerate}
\item the fibers of $f$ are Cohen-Macaulay;
\item each power $\omega^{[n]}_{X/B}$ is Cohen-Macaulay.
\end{enumerate}
Indeed, if a sheaf $\cG$ on $\cX$ is Cohen-Macaulay then $\pi_*G$ is also Cohen-Macaulay: locally write $\cX$ as a quotient of an affine  scheme $V$ by $\bmu_r$, and $\cG$ corresponding to an equivariant module $G$. The complex computing local cohomology of $G$ can be taken $\bmu_r$-equivariant, and the invariant subcomplex splits. Therefore, if the local cohomology vanishes the invariant part, computing local cohomology of $G^{\bmu_r}$ on $X$ vanishes.

In general, the second condition is not a logical consequence of the first.
There are examples \cite[\S 6]{Singh} of Cohen-Macaulay log canonical threefolds whose
index-one covers are not Cohen-Macaulay.  
\end{remark}

\begin{remark}
In characteristic zero, varieties with canonical (or even log terminal) singularities
admit index-one covers with canonical singularities \cite{Re80Ca3folds}, which
are therefore rational \cite{Elkik} and Cohen-Macaulay \cite{KoMo}.
However, in positive characteristic there exist log terminal surface singularities
with index-one covers that are not canonical
\cite{KawamataPathology}.  Generally, index-one covers present technical difficulties
when the index of the singularity is divisible by the characteristic, especially
in small characteristics.  Our approach sidesteps these issues.  
\end{remark}

\section{Semilog canonical singularities and compactifications}\label{Sec:SLC}

From here on we assume our schemes are over a field of characteristic 0.

\subsection{Properness results and questions}\label{Sec:properness}
Recall that canonical 
singularities deform to canonical singularities \cite{KawCan}.  This fact and
Proposition~\ref{prop:SLC} of the appendix 
allow us to formulate the following:
\begin{definition}
We define $\Kolo_\can\subset \Kolo$ as the open substack
corresponding to
canonically polarized twisted varieties where {\em both} $\cX$ and $X$ have canonical singularities.  

We define $\Kolo_\slc, \Kolo_\gorslc \subset \Kolo$ as the open substack
corresponding to
canonically polarized twisted varieties where  $\cX$ has semilog canonical (resp. Gorenstein semilog canonical) singularities.   

Objects of $\Kolo_\slc$ are called {\em families of twisted stable varieties}.
\end{definition}
Theorem~\ref{Th:KQisPP} induces equivalences of the  categories above with the respective categories of Koll\'ar families and rigidified Koll\'ar families.

We review what is known about properness of moduli spaces
of stable varieties:
\begin{proposition}
Each connected component of the
closure of $\Kolo_\can$ in $\Kolo_\slc$ is proper with projective
coarse moduli space.  
\end{proposition}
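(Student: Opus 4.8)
The plan is to check the two valuative criteria on a connected component $\cZ$ of $\overline{\Kolo_\can}$ inside $\Kolo_\slc$ and then to exhibit an ample bundle on its coarse space. Since $\Kolo$ inherits from the proof of Theorem~\ref{Th:PP-algebraic} the open-and-closed decomposition $\StaL=\sqcup_\fF\StaL_\fF$ by Hilbert--Euler characteristic, any connected component lies over a single $\fF$, hence inside one $\Kolo_{\slc,\fF}$. The first step is therefore \emph{boundedness}: the objects of $\overline{\Kolo_\can}$ with fixed $\fF$ form a bounded family, so that the index of the occurring singularities is bounded and $\cZ$ is contained in a single chart $[H/G]$ of Theorem~\ref{Th:PP-algebraic}. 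This is where one invokes the boundedness theorems for stable varieties (Alexeev for surfaces, and the appropriate higher-dimensional input). Boundedness makes $\cZ$ quasi-compact, so it remains to verify separatedness and universal closedness.

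For \emph{separatedness} I would apply the valuative criterion. Given a discrete valuation ring $R$ with fraction field $K$ and two objects of $\cZ$ over $\Spec R$ agreeing over $\Spec K$, the equivalence of Theorem~\ref{Th:KQisPP} reduces the comparison to the associated Koll\'ar families $(X\to\Spec R,F)$ and in turn to their coarse total spaces. Each is a flat family of stable varieties, so its total space is a relative semilog canonical model of the common generic fiber; uniqueness of such models, a consequence of the finite generation of the (log) canonical ring \cite{BCHM}, forces the two special fibers to coincide as polarized schemes, and the equivalence transports this back to an isomorphism of the twisted families. Hence $\cZ$ is separated.

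For \emph{universal closedness}, i.e.\ the existence part of the valuative criterion, I would start from a twisted stable variety over $\Spec K$ whose class lies in $\cZ$, spread it out to \emph{some} flat proper family over $\Spec R$ after a finite extension of $R$ (choosing a projective embedding of the coarse space and using properness of the Hilbert scheme), and then run the relative minimal model program, passing to the relative canonical model. Semistable reduction together with the existence of semilog canonical limits produces a family of stable varieties over $\Spec R$ extending the given one, which by Theorem~\ref{Th:KQisPP} lifts canonically to a family of twisted stable varieties, yielding a map $\Spec R\to\Kolo_\slc$. Because $\cZ$ is closed in $\Kolo_\slc$ and contains the image of the generic point, the closed point maps into $\cZ$ as well, so the extension lands in $\cZ$. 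This is the step I expect to be the main obstacle: it rests squarely on deep results of the minimal model program, namely the existence and basic properties of semilog canonical models, which in arbitrary dimension were only partially available.

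Finally, \emph{projectivity of the coarse moduli space}. Once $\cZ$ is proper, of finite type, and with finite inertia, its coarse space $Z$ is a proper algebraic space, and to see it is a projective scheme I would produce an ample line bundle from the canonical polarization. Concretely, for suitable $n$ the determinant $\det f_*\bigl(\omega_{\cX/B}^{\circ}\bigr)^{\otimes n}$ (equivalently the determinant of a power of the dualizing sheaf of the coarse family) descends to $Z$, and the Koll\'ar-type semipositivity and ampleness results for moduli of canonically polarized varieties show that a suitable power is ample on $Z$. This gives projectivity of the coarse space of each connected component of $\overline{\Kolo_\can}\cap\Kolo_\slc$.
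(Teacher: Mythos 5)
Your overall architecture (boundedness, then the valuative criteria, then projectivity via Koll\'ar's ampleness criterion) matches the intended proof, and your separatedness and projectivity steps are essentially correct. But there are two genuine gaps, both stemming from not exploiting that the component $\cZ$ is the \emph{closure of the canonical locus}.

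First, boundedness. You invoke ``boundedness theorems for stable varieties \dots\ and the appropriate higher-dimensional input.'' No such input is available: boundedness of stable varieties with fixed invariants is known only in dimension $\le 2$ \cite{Alexeev} and remains open in higher dimensions, as is noted in the remark following the next proposition. The argument that actually works (following \cite{Karu}) never uses it: Matsusaka's big theorem bounds the open locus $\Kolo_\can$; one then compactifies a tautological family over a finite-type base, applies weak semistable reduction so the total space and base are nice, and passes to the relative canonical model $\cProj_B R(X/B)$, whose total space has canonical singularities and whose fibers are shown to be semilog canonical by deformation invariance of plurigenera and adjunction. This exhibits every point of the closure inside a finite-type family, which is what bounds $\cZ$.

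Second, universal closedness. You run the valuative criterion starting from an \emph{arbitrary} twisted stable variety over $\Spec K$ in $\cZ$ and then appeal to ``existence of semilog canonical limits,'' which you correctly flag as only partially available --- but you leave the gap unresolved. The fix is that, since $\cZ$ is the closure of $\Kolo_\can$, it suffices to test the criterion on generic fibers lying in that dense open, i.e., with canonical singularities. There, semistable reduction plus the relative canonical model of \cite{BCHM} produces $\psi:W\to\Delta'$ with $W$ canonical and $K_{W/\Delta'}$ relatively ample; adjunction and inversion of adjunction \cite{kawakita} together with Lemma~\ref{lemma:SLCcrit} show the special fiber is semilog canonical; and the twisted limit is $\left[(CW\setminus 0)/\GG_m\right]$ built from the canonical ring, with uniqueness coming from uniqueness of the canonical model via Theorem~\ref{Th:KQisPP}. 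As written, your argument needs the full semilog canonical MMP, which is precisely the content of Assumption~\ref{assume:semiMMP} reserved for the \emph{next} proposition --- it is not needed for this one.
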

\begin{proof}
Most of this is contained in \cite{Karu}, though our definition of
families differs from Karu's.

We first check that the closure of $\Kolo_\can$ satisfies the
valuative criterion for properness.

Let $\Delta$ be the spectrum of a discrete valuation ring, with
special point $s$ and generic point $\eta$.  Let $\cX_{\eta}$ be a
canonically-polarized orbispace with Gorenstein canonical 
singularities over $\eta$.
Let $X_{\eta}$ denote its coarse moduli space.  

Apply resolution of singularities and semistable reduction \cite{KKMS} to 
obtain
\begin{itemize}
\item a finite branched cover $\Delta'\to \Delta$ with generic point $\eta'$
and special point $s'$;
\item a nonsingular variety $Y$ and a flat projective morphism $\phi:Y\to \Delta'$,
with $Y_{s'}$ reduced simple normal crossings;
\item a birational morphism $Y_{\eta'} \to X_{\eta'}$.
\end{itemize}
Consider the canonical model of $Y$ relative to $\phi$
$$\psi:W \to \Delta',$$
which exists due to \cite[Theorem 1.1]{BCHM}.
It has the following properties:
\begin{itemize}
\item $K_{W/\Delta'}$ is $\QQ$-Cartier and ample relative to $\psi$;
\item $W$ has canonical singularities, so every exceptional divisor has
discrepancy $\ge 0$.
\end{itemize}
As a consequence, we deduce
$W_{\eta'}\simeq X_{\eta'}$ and
$W$ is Cohen-Macaulay.

Furthermore,
every exceptional divisor in the special fiber has log 
discrepancy $\ge 0$ with respect to $(W,W_{s'})$, thus
$W_s$ has normal crossings in codimension one.
In particular, the pair $(W,W_{s'})$ is log canonical.
Applying `Adjunction' \cite{kawakita} and 
criterion \ref{lemma:SLCcrit}, we conclude
that $W_{s'}$ has semilog canonical singularities.
The variety $W_{s'}$ is the desired stable limit of $X_{\eta}$;  it is unique by the 
uniqueness of the canonical models.

Now we construct our desired stack-theoretic stable limit. The point is that the stack-theoretic canonical model has canonical singularities for the same reason $W$ has. In classical terms,   
since $W$ has canonical singularities, its index-one cover does as well \cite[5.20,5.21]{KoMo}.
Furthermore, the direct-image sheaves
$$\psi_*\omega_{\psi}^{[n]}, \quad n\ge 0,$$
are locally free and commute with restriction to the fibers, i.e.,
$$\psi_*\omega_{\psi}^{[n]}|_{s'}=\Gamma(W_t,\omega_{W_{s'}}^{[n]}).$$
This formulation of `deformation invariance of plurigenera' 
follows from the existence of minimal models \cite[Cor. 3, Thm. 8]{nakayama}
and the good behavior of direct images of dualizing sheaves \cite[Thm 2.1]{KollarHDI}.

Since $W$ is a canonical model, we can express
$$W=\Proj_{\Delta'}  \oplus_{n \ge 0} \psi_*\omega_{\psi}^{[n]}.$$
The corresponding affine cone
$$CW:=\Spec_{\Delta'} \oplus _{n \ge 0 } \psi_*\omega_{\psi}^{[n]}$$
comes with a $\GG_m$-action associated with the grading, and we
define
$$\cW=\left[ \, (CW\setminus 0)\ /\ \GG_m\,\right].$$
Again, we have $\cW_{\eta'}\simeq \cX_{\eta'}$ (as polarized
orbispaces) and thus $\cW_{s'}$ has Gorenstein
semilog canonical singularities.  Indeed, $\cW$ has canonical
singularities (it is locally modeled by the index-one
cover of $W$) and thus is Cohen-Macaulay;  the dualizing
sheaf $\omega_{\cW/\Delta'}$ is invertible by the $\GG_m$-quotient
construction.  Repeating the previous adjunction
analysis yields that $\cW_{s'}$ is semilog canonical.

The uniqueness of the stack-theoretic limit
follows from
the uniqueness of the canonical model, via Theorem~\ref{Th:KQisPP}.

We turn now to boundedness:  \cite[\S 3]{Karu} still shows that 
each connected component of the closure of $\Kolo_\can$ is of finite
type:  Matsusaka's big theorem shows the open locus $\Kolo_\can$ is of finite type. We apply weak semistable reduction to an arbitrary compactification of a tautological family, and obtain a family $X\to B$ which is weakly semistable.  
In particular, $B$ is smooth.  Applying finite generation (\cite{BCHM}) we can  take the relative canonical model $X^\can=\Proj_B R(X/B) \to B$, or its orbispace version $\cX^\can=\cProj_B R(X/B) \to B$. The total space has canonical singularities as it is a canonical model - this is true for either $X^\can$ or $\cX^\can$. Deformation invariance of plurigenera shows that this is flat. An argument similar to the one below shows that the fibers are semilog canonical. This gives a compactified family over a base of finite type as required.

The argument there, citing Koll\'ar's projectivity
criterion \cite{KollarJDG}, shows that properness of the closure
of $\Kolo_\can$  implies projectivity of its coarse moduli space.  

\end{proof}

We would like the following formulation of `semi-canonical models':
\begin{assumption} \label{assume:semiMMP}
Suppose that $\Delta$ is the spectrum of a discrete valuation ring with 
special point $s$ and generic point $\eta$.  Let $X_{\eta}$ be a stable variety
over $\eta$.  Then there exists a finite ramified base-change 
$\Delta'\to \Delta$ with special point $s'$ and generic point $\eta'$, a unique variety $W$, and a flat morphism 
$\psi:W\to \Delta'$ satisfying the following:
\begin{itemize}
\item{$W_{\eta'}$ 
is isomorphic to the base-change  $X_{\eta'} = X \times_\eta \eta'$.}
\item{The relative dualizing sheaf $\omega_{\psi}$ is $\QQ$-Cartier
and ample.}
\item{
Each reflexive power $\omega^{[n]}_{\psi}$ is $S_2$ relative
to $\psi$, i.e., $S_2$ on restriction to the fibers.
If $X_{\eta}$ is Cohen-Macaulay, we expect the sheaves $\omega^{[n]}_{\psi}$ to be Cohen-Macaulay
as well. \cite{KolKov}}
\item{$W_{s'}$ has semilog canonical singularities.}
\item{The sheaves $\psi_*\omega_{\psi}^{[n]},n\ge 0$ are locally
free and satisfy
$$\psi_*\omega_{\psi}^{[n]}|_{s'}=\Gamma(W_{s'},\omega_{W_{s'}}^{[n]}).$$}
\end{itemize}
\end{assumption}

\begin{proposition} 
Under Assumption~\ref{assume:semiMMP}, $\Kolo_\slc$ satisfies the 
valuative criterion for properness.  In particular, each connected 
component of finite type is proper.
\end{proposition}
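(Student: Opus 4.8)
The plan is to convert the valuative criterion, through the equivalence of Theorem~\ref{Th:KQisPP}, into the existence and uniqueness of the ``semi-canonical model'' $W$ postulated by Assumption~\ref{assume:semiMMP}. Let $\Delta$ be the spectrum of a discrete valuation ring with generic point $\eta$ and special point $s$, and let $(\cX_\eta\to\eta,\lambda_\eta)$ be an object of $\Kolo_\slc(\eta)$, i.e.\ a twisted stable variety. First I would pass to the coarse moduli space $X_\eta$: since $\pi:\cX_\eta\to X_\eta$ is an isomorphism in codimension one and crepant, ``semilog canonical'' transfers from $\cX_\eta$ to $X_\eta$ (criterion~\ref{lemma:SLCcrit}), while $\omega_{X_\eta}=\pi_*\omega_{\cX_\eta}^\circ$ is ample and $\QQ$-Cartier by the canonical polarization. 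Thus $X_\eta$ is a stable variety, which is exactly the input required by Assumption~\ref{assume:semiMMP}.

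For existence, I would apply Assumption~\ref{assume:semiMMP} to $X_\eta$ to obtain a finite base change $\Delta'\to\Delta$ and the model $\psi:W\to\Delta'$. The key step is to recognize $(W\to\Delta',\omega_\psi)$ as a Koll\'ar family of polarizing $\QQ$-line bundles in the sense of Definition~\ref{Def:Kollar}: the fibers are reduced, connected, and $S_2$ (being slc), $\omega_\psi$ restricts to a reflexive rank-one sheaf on each fiber with an invertible power (as $\omega_\psi$ is ample and $\QQ$-Cartier), and the formation of each $\omega_\psi^{[n]}$ commutes with arbitrary base change. This last point uses the $S_2$ hypothesis of the Assumption: the natural map $\omega_\psi^{[n]}|_{W_b}\to\omega_{W_b}^{[n]}$ is injective with cokernel supported in codimension $\ge 2$, and since both source and target are $S_2$ and the map is an isomorphism in codimension one, it is an isomorphism, exactly as in the proof of Theorem~\ref{Th:OrbOm}; commutation with arbitrary base change then follows from the base-change theorem \cite{kollarjalg} underlying Proposition~\ref{Prop:flatsheaf}. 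Feeding this Koll\'ar family through Theorem~\ref{Th:KQisPP} produces a uniformized twisted variety $(\cW\to\Delta',\cL)$ with $\cL=\omega_{\cW/\Delta'}^\circ$, hence an object of $\Kolo(\Delta')$; since $W_{s'}$ is slc and the index-one cover again transfers this property (criterion~\ref{lemma:SLCcrit}), $\cW_{s'}$ is slc, so $\cW\in\Kolo_\slc(\Delta')$. Finally $\cW_{\eta'}\cong\cX_{\eta'}$ because they share the coarse space $X_{\eta'}$ and the canonical polarization.

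For uniqueness, suppose $(\cW'\to\Delta',\lambda')$ is any object of $\Kolo_\slc(\Delta')$ restricting to $\cX_{\eta'}$ on the generic fiber. Passing to coarse spaces via Theorem~\ref{Th:KQisPP} gives a flat family $W'\to\Delta'$ with $W'_{\eta'}\cong X_{\eta'}$, slc special fiber, and relative dualizing sheaf that is $\QQ$-Cartier, ample, and has $S_2$ reflexive powers; thus $W'$ satisfies the defining properties of the semi-canonical model. The uniqueness clause of Assumption~\ref{assume:semiMMP} then forces $W'\cong W$ over $\Delta'$, compatibly with the identification over $\eta'$, and running this isomorphism back through the equivalence of Theorem~\ref{Th:KQisPP} yields $\cW'\cong\cW$. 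Existence together with uniqueness is the valuative criterion for properness, and since $\Kolo_\can$ is of finite type with slc limits, each finite-type connected component is then separated and proper.

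The main obstacle I anticipate is the bookkeeping at the interface between the stack $\cX$ and its coarse space $X$: one must verify that ``semilog canonical'' genuinely transfers in both directions across $\pi$ (the role of the appendix, criterion~\ref{lemma:SLCcrit}), and that the relative dualizing complex of the stack is computed by $\omega_\psi^\circ$ downstairs so the canonical polarization is preserved under the equivalence. The genuinely non-formal content—existence and uniqueness of the stable limit—has been isolated into Assumption~\ref{assume:semiMMP}, so what remains is a translation; within it, the delicate technical check is the base-change compatibility of the reflexive powers $\omega_\psi^{[n]}$, which I would handle precisely as in Theorem~\ref{Th:OrbOm}.
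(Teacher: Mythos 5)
Your proposal is correct and matches the paper's intended argument: the paper gives no explicit proof of this proposition, since Assumption~\ref{assume:semiMMP} was formulated precisely so that the construction in the preceding proof goes through verbatim — form $\cProj_{\Delta'}\oplus_{n\ge 0}\psi_*\omega_\psi^{[n]}$, which is exactly your application of Theorem~\ref{Th:KQisPP} to the Koll\'ar family $(W\to\Delta',\omega_\psi)$, with the last bullet of the Assumption supplying the flatness/base-change input. The one small imprecision is citing Lemma~\ref{lemma:SLCcrit} for transferring ``semilog canonical'' across the quasi-\'etale crepant maps $\cX_\eta\to X_\eta$ and $\cW\to W$; that lemma handles normalization, whereas the relevant fact is invariance of (semi)log canonicity under finite crepant covers \'etale in codimension one (the ``locally modeled by the index-one cover'' step of the preceding proof), with~\ref{lemma:SLCcrit} used only to reduce to the normal case.
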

\begin{remark}
To say that each connected component of $\Kolo_\slc$ is of finite type
is a boundedness assertion about the stable varieties with given
numerical invariants.  These assertions have been proven in dimensions
$\le 2$ \cite{Alexeev}, but remain open in higher dimensions.
\end{remark}

\begin{remark} \label{rema:KoKo}
The discussion following Corollary 1.3 of \cite{KolKov} implies that 
the stable limit
of a one-parameter family of Cohen-Macaulay stable 
varieties (resp. Gorenstein twisted stable varieties) is Cohen-Macaulay
(resp. Gorenstein).  
Thus the $\Kolo_{\gorslc} \subset \Kolo_\slc$ is a 
union of connected components. 

Nevertheless, there are good reasons to consider non-Cohen-Macaulay
singularities. We have already mentioned non-Cohen-Macaulay log canonical singularities in Remark \ref{Rem:CM}. In characteristic zero, log terminal singularities
and their index-one covers are automatically Cohen-Macaulay \cite{KoMo}.
\end{remark}

\appendix 
\section{The semilog canonical locus is open}
\subsection{Statement}
The following result is known for surfaces (\cite{K-SB,KollarJDG}). The present proof was suggested to us by V. Alexeev. The second part can be found in \cite{Karu}.

\begin{proposition} \label{prop:SLC}
Let $B$ be a scheme of finite type over a field
of characteristic zero and
$f:X \to B$ be a flat morphism of finite type
with connected reduced equidimensional 
fibers.  Assume that the fibers are $S_2$ and Gorenstein
in codimension one.  

Suppose there exists an invertible sheaf $\cL$ on $X$ such that
for each $b \in B$ we have
$$\cL|X_b\simeq j_*\omega^{\otimes n}_U, \quad n\in \NN,$$
where $j:U\hookrightarrow X_b$ is the open subset where $X_b$ is
Gorenstein.  
Then the locus where the 
fibers have semilog canonical singularities
$$B^{slc}=\{b\in B: X_b \text{ is semilog canonical} \}
$$
is open in $B$.
\end{proposition}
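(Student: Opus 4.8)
The plan is to prove openness by showing that $B^{slc}$ is constructible and stable under generization. For constructibility I would, after localizing on $B$ and invoking resolution of singularities in characteristic zero, stratify $B$ into finitely many locally closed pieces over each of which a single log resolution $g:Y\to X$ of the total space restricts to a log resolution of every geometric fibre, with a fixed combinatorial type of exceptional divisors; generic smoothness and generic flatness guarantee such a stratification exists. On each stratum the relevant discrepancies are then constant, so the slc condition is constant along strata, and in particular $B^{slc}$ is constructible. A constructible set is open precisely when it is stable under generization, so it remains to prove the deformation-invariance statement: if a special fibre is semilog canonical, then so are the nearby fibres.

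The key leverage is the hypothesis that $\cL$ is invertible on the \emph{total space} and restricts to $\omega_{X_b}^{[n]}$ with a uniform $n$; this means $\omega_{X/B}^{[n]}\simeq\cL$ is invertible, so the relative canonical class is $\QQ$-Cartier of bounded index $n$ along the whole family. One may equivalently pass to the associated twisted variety $\cX\to B$ of Theorem~\ref{Th:KQisPP}, on which $\omega_{\cX/B}$ is an honest line bundle. This uniform-index property is exactly what makes inversion of adjunction usable uniformly in $b$. Since stability under generization can be tested after base change along maps from smooth curves, I would base change to the spectrum of a discrete valuation ring, so that the special fibre becomes a Cartier divisor in the total space and $K_{X}+X_{b}$ is $\QQ$-Cartier of index dividing $n$. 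Criterion~\ref{lemma:SLCcrit} together with inversion of adjunction \cite{kawakita} then converts the fibrewise statement ``$X_{b}$ is semilog canonical'' into the statement ``the pair $(X,X_{b})$ is log canonical in a neighbourhood of $X_{b}$''.

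At this point the problem is reduced to openness of the log canonical locus for a $\QQ$-Cartier total-space pair, which I would settle by the lower semicontinuity of discrepancies. On the fibre-compatible log resolution produced in the first step, the pair restricted to a fibre is log canonical exactly when every exceptional log discrepancy is $\ge 0$; the locus where some exceptional divisor acquires negative log discrepancy over the fibre is closed in $B$, so the set of fibres that fail to be log canonical — equivalently, that fail to be semilog canonical — is closed. Combined with constructibility, this shows $B^{slc}$ is open. The auxiliary demi-normality requirement (that the fibres be $S_2$ and nodal in codimension one) is comparatively harmless: $S_2$ is open by \cite[12.2.1]{EGA-IV-3} and is already assumed, while the condition of having at worst nodes in codimension one is open because nodes deform only to nodes or smooth points.

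I expect the main obstacle to be the interaction between the singularities of the fibres and those of the total space. A priori $X$ is neither normal nor $\QQ$-Gorenstein, and fibrewise normalization does not commute with base change, so one cannot naively compare discrepancies on $X_b$ with discrepancies of a total-space pair. The global invertibility of $\cL\simeq\omega_{X/B}^{[n]}$ — reflected in the twisted variety $\cX$, where $\omega_{\cX/B}$ is genuinely a line bundle — is precisely the device that circumvents this, supplying a uniform $\QQ$-Cartier index and permitting inversion of adjunction in families. The most technical step is therefore the construction of a single log resolution of the total space that remains a log resolution of the geometric fibres over each stratum; once that is in place, constructibility and the semicontinuity of discrepancies finish the argument.
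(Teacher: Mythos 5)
Your overall architecture coincides with the paper's: prove constructibility by stratifying the base so that a single good resolution is simultaneously a good resolution of all fibres over each stratum (so discrepancies are constant along strata), then prove stability under generization by base-changing to a DVR and using the criterion of Lemma~\ref{lemma:SLCcrit} together with Kawakita's (inversion of) adjunction to pass between the fibre and a total-space pair. That much is correct and is exactly the paper's strategy.

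The genuine gap is in how you handle the non-normality of the total space. You correctly identify that $X_T$ is not normal and that one ``cannot naively compare discrepancies on $X_b$ with discrepancies of a total-space pair,'' but the fix you propose --- passing to the twisted variety $\cX$, or equivalently invoking the invertibility of $\cL\simeq\omega_{X/B}^{[n]}$ --- addresses only the $\QQ$-Cartier issue, not normality. Since $\cL$ is already assumed invertible on $X$ in this proposition, the twisted variety buys nothing here, and discrepancies of the non-normal $X_T$ are simply not defined, so the statement ``$(X,X_b)$ is log canonical'' has no meaning as written and Kawakita's theorem (which takes a normal ambient variety) cannot be applied to it. What is actually needed, and what the paper does, is to normalize the total space $\nu:\tilde X_T\to X_T$ over the DVR, introduce the conductor divisor $C$ (flat over $T$, restricting to the conductor of each fibre), and run the whole argument for the pair $(\tilde X,\tilde X_s+C)$: Lemma~\ref{lemma:SLCcrit} applied to the fibre shows $(\tilde X_s,C_s)$ is slc, inversion of adjunction gives $(\tilde X,\tilde X_s+C)$ log canonical near $\tilde X_s$, the discrepancy bounds for $(\tilde X,C)$ (log discrepancy $\ge 0$ for divisors with centre in $\tilde X_s$, $\ge -1$ for divisors dominating $T$) give that $(\tilde X,\tilde X_t+C)$ is log canonical for nearby $t$, and adjunction plus Lemma~\ref{lemma:SLCcrit} bring this back down to $X_t$ being slc. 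A secondary, smaller slip: your closing claim that ``the set of fibres that fail to be log canonical is closed in $B$'' by semicontinuity on a fibre-compatible resolution cannot hold globally (if it did, constructibility would be superfluous); the resolution is only fibre-compatible stratum by stratum, which is precisely why the argument must be split into constructibility plus the one-parameter generization step.
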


\begin{remark}
In the case where the fibers are Gorenstein, the existence of
$\cL$ is automatic.  Indeed, the relative dualizing sheaf 
$\omega_{X/B}$ is itself invertible.
\end{remark}

Before starting the proof proper, we recall a 
characterization of semilog canonical singularities,
which can be found in chapter 2 of 
\cite{FlAb}:
\begin{lemma} \label{lemma:SLCcrit}
Let $(Y,D)$ be a pair consisting of a
connected equidimensional $S_2$ scheme $Y$ and a
reduced effective divisor $D$
with no components 
contained in the singular locus of $Y$.
Suppose that $Y$ has normal crossings in codimension
one and that the Weil divisor
$K_Y+D$ is $\QQ$- Cartier.  
Let $\nu:\tilde{Y} \to Y$ denote the 
normalization and $C\subset \tilde{Y}$ its conductor,
i.e., the reduced effective divisor such that
$$\nu^*(K_Y+D)=K_{\tilde{Y}}+\tilde{D}+C,$$
where $\tilde{D}$ is the proper transform of $D$.  
Then the following conditions
are equivalent:
\begin{itemize}
\item{$(Y,D)$ is semilog canonical.}
\item{$(\tilde{Y},\tilde{D}+C)$ is log canonical.}
\end{itemize}
\end{lemma}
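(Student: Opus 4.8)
The plan is to reduce the equivalence to the invariance of discrepancies under the finite normalization map $\nu$, with the conductor formula $\nu^*(K_Y+D)=K_{\tilde Y}+\tilde D+C$ serving as the bridge. I would take as the working definition of semilog canonical the discrepancy condition on a semi-resolution $g\colon Z\to Y$ (a proper morphism with $Z$ semismooth, i.e.\ having at worst normal crossings, and $g$ an isomorphism over the locus where $Y$ is already semismooth), namely $a(E,Y,D)\ge -1$ for every exceptional divisor $E$ over $Y$, and aim to match this with the analogous condition defining log canonicity of $(\tilde Y,\tilde D+C)$. Since this characterization is essentially the definition of slc used in \cite{FlAb}, the real content is the passage between the intrinsic (semi-resolution) formulation and the normalization formulation.

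First I would record the geometry of $\nu$. Because $Y$ is $S_2$ and has normal crossings in codimension one, Serre's criterion identifies the non-normal locus of $Y$ with the closure of the codimension-one nodal locus, and $\nu$ is an isomorphism over its complement; in particular $\nu$ is finite and generically an isomorphism on each irreducible component. The conductor $C$ is then supported exactly over this non-normal locus, and by hypothesis it is the reduced effective divisor making the conductor formula hold, which along a generic codimension-one node reflects that the different of a node is the reduced preimage with coefficient one.

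The central step is to compare discrepancies directly. Every divisorial valuation over $\tilde Y$ is also one over $Y$ and conversely, since $\nu$ is generically an isomorphism on components. I would fix a log resolution of $(\tilde Y,\tilde D+C)$, observe that it furnishes a semi-resolution of $(Y,D)$ after accounting for the two branches meeting along each codimension-one node, and compare the two discrepancy formulas term by term: pulling $K_Y+D$ back to the common resolution via $\nu$ and using $\nu^*(K_Y+D)=K_{\tilde Y}+\tilde D+C$ gives the same exceptional contributions on both sides, so $a(E,Y,D)=a(E,\tilde Y,\tilde D+C)$ for every such $E$. With the discrepancies identified, the two conditions coincide: $a(E,Y,D)\ge -1$ for all exceptional $E$ over $Y$ holds if and only if $a(E,\tilde Y,\tilde D+C)\ge -1$ for all exceptional $E$ over $\tilde Y$, which is the asserted equivalence.

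I expect the main obstacle to be the bookkeeping along the conductor. One must verify that normalizing the codimension-one nodes contributes precisely the coefficient-one boundary $C$ on $\tilde Y$ (neither more nor less), so that $\tilde D+C$ is exactly the pullback boundary and the nodal points register as log canonical on the normalization, and one must check that no exceptional divisor lying over the non-normal locus is double-counted or mismatched when translating between the semi-resolution of $Y$ and the log resolution of $\tilde Y$. Once this local analysis at the conductor is settled, the global equivalence follows from the standard invariance of discrepancies under a finite morphism that is generically an isomorphism.
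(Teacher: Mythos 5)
The paper does not actually prove this lemma: it is quoted verbatim from \cite{FlAb}, so there is no in-text argument to compare yours against. Your overall strategy --- use the conductor formula $\nu^*(K_Y+D)=K_{\tilde Y}+\tilde D+C$ (which the statement hands you, with $C$ reduced) to identify discrepancies over $Y$ with discrepancies over $\tilde Y$, and then observe that the two $\ge -1$ conditions coincide --- is indeed the standard argument behind the cited result, and the identification $a(E,Y,D)=a(E,\tilde Y,\tilde D+C)$ is the correct heart of the matter.

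The one step that would not survive scrutiny as written is the claim that a log resolution of $(\tilde Y,\tilde D+C)$ ``furnishes a semi-resolution of $(Y,D)$ after accounting for the two branches.'' Reglueing a resolution of $\tilde Y$ into a non-normal model over $Y$ requires the resolution to be equivariant for the gluing involution on (the normalization of) the conductor, and an arbitrary log resolution is not; this compatibility is precisely the nontrivial content of the existence theorem for good semi-resolutions, so you cannot get it for free. The standard way to avoid this is to run the comparison in the opposite direction: start from a good semi-resolution $g\colon Z\to Y$ (whose existence in characteristic zero is a theorem you may quote), pass to its normalization $\tilde Z$, note that $\tilde Z\to\tilde Y$ is then a log resolution of $(\tilde Y,\tilde D+C)$ whose boundary includes the conductor $C_Z$ of $Z$ with coefficient one, and apply the conductor formula on the semismooth (hence Gorenstein, nodal in codimension one) $Z$ to transfer each exceptional discrepancy on $Z$ to the corresponding one on $\tilde Z$. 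Both implications then follow from checking log canonicity on the single log resolution $\tilde Z$. With that rerouting your argument is complete; without it, the direction ``lc of the normalization $\Rightarrow$ slc'' rests on an unproved gluing statement.
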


The condition of having normal crossings in codimension
1 is also open.  Indeed, this means that the only
singularities in codimension 1 are nodes,
which can only deform to smooth points.  We therefore
assume that the fibers of $f$ have normal crossings
in codimension one.

\subsection{Constructibility}
\begin{lemma}
Under the assumptions of Proposition~\ref{prop:SLC},
the locus $B_{\slc}$ is constructible in $B$.
\end{lemma}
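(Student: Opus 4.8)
The plan is to argue by Noetherian induction, reducing to the case where the base is integral and then producing a dense open subset on which semilog canonicity is constant along the fibers. Since a finite union of constructible sets is constructible, it suffices to rule out a minimal closed subset $Z\subseteq B$ for which $B_\slc\cap Z$ fails to be constructible; such a $Z$ may be taken irreducible, and after replacing $B$ by $Z$ with its reduced structure we may assume $B$ is integral with generic point $\eta$, with $B_\slc\cap Z'$ constructible for every proper closed $Z'\subsetneq B$. In this situation it is enough to find a dense open $U\subseteq B$ with $U\cap B_\slc$ equal to either $U$ or $\emptyset$: then $B_\slc=(U\cap B_\slc)\cup\bigl(B_\slc\cap(B\setminus U)\bigr)$ expresses $B_\slc$ as a union of a constructible set and the constructible set $B_\slc\cap(B\setminus U)$, contradicting the choice of $Z$.

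By Lemma~\ref{lemma:SLCcrit} applied with empty boundary, $X_b$ is semilog canonical if and only if the pair $(\widetilde{X_b},C_b)$ is log canonical, where $\nu_b:\widetilde{X_b}\to X_b$ is the normalization and $C_b$ its conductor. The decisive role of the sheaf $\cL$ is to make the index uniform: since $\nu_b^*K_{X_b}=K_{\widetilde{X_b}}+C_b$ by adjunction and $\cL|_{X_b}\simeq\omega_{X_b}^{[n]}$, we find that $n(K_{\widetilde{X_b}}+C_b)=\nu_b^*(\cL|_{X_b})$ is Cartier, so $K_{\widetilde{X_b}}+C_b$ is $\QQ$-Cartier with index dividing the fixed integer $n$ for every $b$. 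I would then form the relative normalization $\nu:\widetilde X\to X$ together with its conductor $C$; in characteristic zero, after shrinking $B$ to a dense open, the formation of $\nu$ commutes with passage to the fibers, so that $\widetilde X_b=\widetilde{X_b}$ and $C|_{\widetilde X_b}=C_b$, and $\widetilde X\to B$ becomes flat with normal fibers.

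Next I would choose a log resolution of the generic pair $(\widetilde{X_\eta},C_\eta)$ and spread it out: using resolution of singularities in characteristic zero together with a standard spreading-out argument, after further shrinking $B$ to a dense open $U$ it extends to a proper morphism $g:Y\to\widetilde X$ with $Y\to U$ smooth and each restriction $g_b:Y_b\to\widetilde X_b$ a log resolution of $(\widetilde X_b,C_b)$, the exceptional divisors and the proper transform of $C$ restricting fiberwise to the corresponding divisors. Because $n(K_{\widetilde X/U}+C)=\nu^*\cL$ is invertible, the relative discrepancy identity
$$K_{Y/U}+\widetilde C\;=\;g^*\!\bigl(K_{\widetilde X/U}+C\bigr)+\sum_i a_i E_i$$
is meaningful and restricts to the fiberwise discrepancy identity, whence the coefficients $a_i$ are independent of $b\in U$ and agree with those over $\eta$. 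Log canonicity of $(\widetilde X_b,C_b)$ is equivalent to $a_i\ge -1$ for all $i$, a condition that can be read off this single log resolution; being constant in $b$, it holds either for every $b\in U$ or for no $b\in U$. Thus $U\cap B_\slc\in\{U,\emptyset\}$, which is exactly what the induction requires.

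The step I expect to be the main obstacle is the simultaneous spreading-out: arranging on one dense open that the relative normalization commutes with restriction to fibers, that a single relative log resolution restricts to a log resolution of every fiber, and that the $\QQ$-Cartier structure descends to the normalization compatibly enough that discrepancies computed in the family restrict to the fiberwise discrepancies. Each ingredient is standard in characteristic zero, but their compatibility is what must be checked with care; the hypothesis doing the real work throughout is the uniform bound on the index furnished by the global invertible sheaf $\cL$.
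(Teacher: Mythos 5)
Your proposal is correct and follows essentially the same route as the paper: Noetherian induction to a dense open stratum, spreading out a simultaneous (log) resolution in characteristic zero, and using the constancy of discrepancies over such a stratum together with Lemma~\ref{lemma:SLCcrit} to conclude that semilog canonicity is constant there. You are in fact somewhat more explicit than the paper about passing through the relative normalization with its conductor and about the role of $\cL$ in making $K_{\widetilde{X}_b}+C_b$ uniformly $\QQ$-Cartier, which are exactly the points the paper's terser argument leaves implicit.
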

\begin{proof}
The main ingredient is resolution of singularities.
The image of a constructible set is constructible,
so we may replace $B$ by a resolution of singularities
of its normalization $B' \rightarrow  B$.  We have the pull
back family
$$X'=X \times_B B' \rightarrow  B'.$$
We claim we can stratify $B'$ by locally-closed
subsets 
$$B'=\coprod_{j\in J} B'_j$$
such that the restriction over each stratum
$$f_j:X'_j=X'|B'_j \rightarrow B'_j$$
admits a simultaneous good resolution of singularities.  
By definition, a good resolution of singularities is one where the preimage
of the singular locus is a simple normal crossings divisor;
in a simultaneous good resolution, each intersection of components
of the simple normal crossings divisor is smooth over the base.

The stratification is constructed inductively:  Choose a good
resolution of singularities $W \rightarrow X'$, which induces a 
good resolution of the geometric generic fiber of $X' \rightarrow B'$.
Since we are working over a field of characteristic zero, such fiber is smooth.  We can choose a dense open subset $B''\subset B'$ such that
$W\rightarrow X'$ is a simultaneous good resolution of the fibers $X'_b$
for $b\in B''$.  However, $B'\setminus B''\subsetneq B'$ is closed, so we are done by Noetherian induction.  

The fibers of $f_j$ have invertible dualizing sheaves, so 
Lemma~\ref{lemma:SLCcrit} allows us to determine whether
the fibers have SLC singularities by computing discrepancies
on the good resolutions.  However, discrepancies are constant
over families with a simultaneous good resolution, so if one fiber
of $f_j$ is SLC then all fibers are SLC.  In particular, the 
the fibers are SLC over a constructible subset of the base.  
\end{proof}

\subsection{Proof of Proposition}
We complete the proof that $B_{\slc}$ is open by proving that it is 
stable under one-parameter generalizations.  Precisely, let $T$ be 
nonsingular connected curve, $t\in T$ a closed point,
$\mu:T\rightarrow  B$ a morphism, and
$$f_T:X_T=X\times_B T \rightarrow T$$
the base change of our family to $T$.  Then the set
$$\{s\in T: X_s \text{ is semilog canonical } \}$$
is open.  The existence of the sheaf $\cL$ 
guarantees that the canonical class of $X_T$ is $\QQ$-Cartier.

Suppose that $s\in T$ is such that $X_s$ has semilog canonical singularities.
Let $\nu:\tilde{X}_T \to X_T$ denote the normalization and $g:\tilde{X}\to T$ the 
induced morphism.  Let $C$ denote the conductor divisor, which
is also flat over $T$.  We may write
$$\nu^*K_X=K_{\tilde{X}}+C,$$
which is also $\QQ$-Cartier.  
If $S$ is the normalization of some irreducible
component of $\tilde{X}_s$ and $\iota:S \to \tilde{X}_s$ the
induced morphism, then 
$$\iota^*(K_{\tilde{X}_s}+C_s)=K_S+\Theta,$$
where $\Theta$ contains the conductor and the preimage of $C_s$.
Lemma~\ref{lemma:SLCcrit} implies that $(S,\Theta)$ is 
log canonical, and therefore that $(\tilde{X}_s,C_s)$ is semilog canonical.
By `Inversion of Adjunction' \cite{kawakita}, the pair 
$(\tilde{X},\tilde{X}_s+C)$ is log canonical as well, at least
in a neighborhood of $\tilde{X}_s$.  Fix a good log resolution
$$\rho:(Y,D) \to (\tilde{X},\tilde{X}_s+C),$$
i.e., one where the union of the exceptional locus and the proper
transform of the boundary is simple normal crossings.  
All discrepancies are computed with respect to this resolution.  
Since $\tilde{X}_s$ is a fiber of $\tilde{X}\to T$, we have
\begin{itemize}
\item
the log discrepancies of $(\tilde{X},C)$ for exceptional divisors
with center in $\tilde{X}_s$ are $\ge 0$;
\item
the log discrepancies of $(\tilde{X},C)$ for exceptional divisors
dominating $T$ are $\ge -1$;
\end{itemize}

For $t\neq s$ in a neighborhood of $s$, the fiber $\tilde{X}_t$ is normal.
Furthermore, $\rho$ induces a good log resolution of $(\tilde{X}_t,C_t)$.  
Consider the pairs $(\tilde{X},\tilde{X}_t+C)$, which are also log
canonical by the discrepancy analysis above.  Applying `Adjunction', we 
conclude that $(\tilde{X}_t,C_t)$ is also log canonical.  
Lemma~\ref{lemma:SLCcrit} implies that $X_t$ is therefore semilog canonical.
\qed

\bibliographystyle{alpha}

\bibliography{slcmod}

\end{document}